\documentclass[11pt, a4paper]{amsart}
\usepackage{amssymb,amsmath,amsthm,amscd}
\usepackage[shortlabels]{enumitem}
\usepackage{leftidx,mathrsfs}
\usepackage{graphicx, mathabx}
\usepackage{color,soul}
\usepackage{microtype}
\usepackage{dsfont,mathtools}
\usepackage{hyperref}
\usepackage[usenames,dvipsnames,svgnames,table]{xcolor}
\usepackage{tikz}
\usepackage{comment}
\usepackage{hhline,array}
\usepackage{caption}
\usepackage{subcaption}
\usepackage{faktor}
\usepackage{epstopdf} 
\usepackage{float}
\usepackage{tikz-cd}
\usepackage[all]{xy}
\usepackage{pgfplots}
\pgfplotsset{compat=1.16}
 
\newtheorem{lemma}{Lemma}[section]
\newtheorem{theorem}[lemma]{Theorem}

\newtheorem{mteor}{Main Theorem}

\newtheorem{proposition}[lemma]{Proposition}
\newtheorem{prop}[lemma]{Proposition}

\theoremstyle{definition}
\newtheorem{definition}[lemma]{Definition}
\newtheorem{defi}[lemma]{Definition}

\theoremstyle{remark}
\newtheorem{remark}{Remark}

\theoremstyle{definition}

\newcommand{\C}{\mathbb{C}}
\newcommand{\D}{\mathbb{D}}

\newcommand{\R}{\mathbb{R}}
\newcommand{\Z}{\mathbb{Z}}

\def\F{\mathcal{F}}
\def\G{\mathcal{G}}

\def\A{\mathcal{A}}

\newcommand{\cD}{\mathcal{D}}
\newcommand{\cE}{\mathcal{E}}
\newcommand{\cF}{\mathcal{F}}
\newcommand{\cH}{\mathcal{H}}
\newcommand{\cK}{\mathcal{K}}

\newcommand{\cU}{\mathcal{U}}
\newcommand{\cV}{\mathcal{V}}

\newcommand{\cT}{\mathcal{T}}

\DeclareMathOperator{\re}{Re}
\DeclareMathOperator{\im}{Im}
\DeclareMathOperator{\Int}{int}

\renewcommand{\epsilon}{\varepsilon}
\renewcommand{\phi}{\varphi}

\newcommand{\cc}[1]{\underset{#1}{\sim}}

\title[Mating parabolic rational maps with Hecke groups]{Mating parabolic rational maps\\ with Hecke groups}

\begin{author}[S.~Bullett]{Shaun Bullett}
\address{School of Mathematical Sciences, Queen Mary University of London, London E1 4NS, UK}
\email{s.r.bullett@qmul.ac.uk}
\end{author}
\thanks{}

\begin{author}[L.~Lomonaco]{Luna Lomonaco}
\address{Instituto de Matem{\'a}tica Pura e Aplicada, Estrada Dona Castorina 110, Jardim Bot{\^a}nico, Rio de Janeiro, RJ, CEP 22460-320, Brazil}
\email{luna@impa.br}
\thanks{L.L  was partially supported by the Serrapilheira Institute (grant number Serra-1811-26166), the FAPERJ - Fundação Carlos Chagas Filho de Amparo à Pesquisa do Estado do Rio de Janeiro (grant number JCNE - E26/201.279/2022 and JCM - E-26/210.016/2024), the ICTP through the Associates Programme and from the Simons Foundation through grant number 284558FY19}
\end{author}

\begin{author}[M.~Lyubich]{Mikhail Lyubich}
\address{Institute for Mathematical Sciences, Stony Brook University, 100 Nicolls Rd, Stony Brook, NY 11794-3660, USA}
\email{mlyubich@math.stonybrook.edu}
\end{author}
\thanks{M.L. was partially supported by NSF grants DMS-1901357 and 2247613.}

\begin{author}[S.~Mukherjee]{Sabyasachi Mukherjee}
\address{School of Mathematics, Tata Institute of Fundamental Research, 1 Homi Bhabha Road, Mumbai 400005, India}
\email{sabya@math.tifr.res.in}
\thanks{S.M. was partially supported by the Department of Atomic Energy, Government of India, under project no.12-R\&D-TFR-5.01-0500, an endowment of the Infosys Foundation, and SERB research project grant MTR/2022/000248.}
\end{author}
	
\begin{document}

\begin{abstract}
We prove that  any degree $d$ rational map having a parabolic fixed point of multiplier $1$ with a fully invariant and simply connected immediate basin of attraction is mateable with the Hecke group $\mathcal H_{d+1}$, with the mating realized by an algebraic correspondence.  
This confirms the parabolic version of a conjecture on mateability between rational maps and Hecke groups made in \cite{BF1}.
The proof is in two steps. The first is the construction of a pinched polynomial-like map which is a mating between a parabolic rational map and a parabolic circle map associated to the Hecke group. The second is lifting this pinched polynomial-like map to an algebraic correspondence via a suitable branched covering.
\end{abstract}

\maketitle

\setcounter{tocdepth}{1}
\tableofcontents

\section{Introduction}\label{intro_sec}

Algebraic correspondences which are {\it matings} between maps and groups were introduced by \cite{BP}, where a one complex parameter family $\F_a$ of holomorphic correspondences was proved to contain matings between quadratic polynomials $q_c:z \to z^2+c$ and the modular group $\mathrm{PSL}_2(\Z)$ for all real $c\in M$, where $M$ is the Mandelbrot set. It was conjectured there that the result could be extended to the case of $c$ complex. Subsequently this conjecture was proved for a large class of complex values of $c$ by the application of David homeomorphism techniques \cite{BH07}, but the full resolution of the conjecture was only achieved in \cite{BL1} with the introduction of the technique of {\it parabolic-like} mappings \cite{L1}, and the replacement of the words ``quadratic polynomials $q_c:z \to z^2+c$'' in the conjecture by the words ``parabolic quadratic rational maps $p_A: z \to z+1/z+A$''. (A relation between quadratic polynomials and parabolic quadratic rational maps was provided by Petersen and Roesch in \cite{PR21}). A related theory of algebraic correspondences realizing matings between quadratic polynomials and faithful discrete representations of the modular group in $\mathrm{PSL}_2(\C)$ was initiated in \cite{BH1}, developed in \cite{BH07}, and is the subject of recent work in \cite{RL1,RL2}. 
Matings between polynomials and Hecke groups were first studied in \cite{BF1} and \cite{BF2}. In \cite{BF1}, at the end of Section 3 it is conjectured that for every polynomial $P$ of degree $d$ with connected Julia set there exists a polynomial $Q$ of degree $d+1$ and an involution $J$, such that the $d:d$ correspondence $J\circ Cov_0^Q$ is a mating between $P$ and the Hecke group $\mathcal H_{d+1}$ (see Definition \ref{corr_defi} for the meaning of the notation $J\circ Cov_0^Q$).

In the late 2010s, a dynamical theory of Schwarz reflection maps in quadrature domains emerged, and abundant examples of matings (in a broader sense) of antiholomorphic rational maps and reflection groups were produced (for instance, see \cite{LM16,LLMM1,LLMM3,LMM23,LMMN}). In particular, the so-called \emph{cubic Chebyshev family} of anti-holomorphic correspondences was studied in \cite{LLMM3}. This is an antiholomorphic analogue of the family of correspondences from \cite{BP} mentioned above. Such correspondences were shown to arise as matings of parabolic quadratic anti-rational maps and an anti-conformal analogue of the modular group. In \cite{LMM23}, this mating phenomenon was generalized to arbitrary degree by establishing a parabolic version of the conjecture of \cite{BF1} in the antiholomorphic setting (more precisely, it was proved that all ``parabolic degree $d\geq 2$ anti-rational maps'' with connected Julia set can be mated with anti-conformal analogues of Hecke groups yielding certain correspondences).
In this paper, we will prove a parabolic version of the conjecture of \cite{BF1} in the holomorphic setting; i.e., we show that there exist algebraic correspondences realizing matings between all ``parabolic degree $d\geq 2$ rational maps'' and Hecke groups 
$\mathcal H_{d+1}$ (see Section~\ref{main_thm_subsec} for a precise statement). 

The general outline of the proof of this result is modeled on the classical Douady-Hubbard theory of polynomial-like maps that realizes such maps as matings of hybrid and external classes (or external maps) \cite{DH}, \cite[\S 3]{Lyu99}. Due to the presence of parabolic points belonging to the boundary of the domains of both the Hecke groups and the rational maps that we are going to mate, we need an extension of the theory of polynomial-like maps different from parabolic-like maps \cite{L1}. Specifically, we work with the category of \emph{pinched polynomial-like maps}, where the domain and the range are allowed to touch at a point. The hybrid classes for the relevant pinched polynomial-like maps are given by parabolic rational maps and their external maps are certain piecewise real-analytic, expansive circle coverings with a unique parabolic fixed point. 

We then extend the Douady-Hubbard mateability result for hybrid classes of polynomials with real-analytic, \emph{expanding} external maps to the current setting by demonstrating that hybrid classes of parabolic rational maps can be mated with a general class of piecewise real-analytic, \emph{expansive} maps having a \emph{unique} parabolic fixed point, yielding pinched polynomial-like maps. The proof of this result is more involved than its classical counterpart as the existence of parabolic points contributes to additional technical hurdles. 
These can be overcome using either the classical  Warschawski theorem or the invariant arcs technique of \cite{L1} .

We apply our construction to two particular maps, which we call the \emph{Hecke map} and the \emph{Farey map}. These maps can be obtained by selecting a Bowen-Series map for a suitable subgroup of the Hecke group $\mathcal H_{d+1}$, and quotienting it by a finite cyclic subgroup of order $2$ or $d+1$, respectively. The maps we obtain this way belong to the same conformal class.

The passage from these pinched polynomial-like maps (which are matings of parabolic rational maps with the Hecke/Farey map) to algebraic correspondences is based on passing to appropriate branched covering spaces. We carry it out in two ways: in a `geometric' way using 
basic branched covering theory for Riemann surfaces, and an `analytic' way based on quasiconformal surgery that yields an explicit characterization of the pinched polynomial-like map as an algebraic function.

\subsection{Statement of the main result}\label{main_thm_subsec}

Let $R: \widehat \C \rightarrow \widehat \C$ be a degree $d$ rational map with a parabolic fixed point of multiplier $1$ having a fully invariant and simply connected immediate basin of attraction $\A(R)$. Define the \emph{filled Julia set}
$$
K(R):= \widehat \C \setminus \A(R).
$$
Note that the parabolic fixed point of $R$ may have two fully invariant and simply connected immediate basins of attraction, for instance the map $B_d(z)= \frac{(d+1)z^d+ (d-1)}{(d+1)+(d-1) z^ d}$, in which case we make a choice for $\A(R)$.
\medskip

Hecke \cite{H} introduced the group
generated by the pair of M\"obius transformations $S: z\to -1/z$, $T_q: z\to z+2\cos(\pi/q)$, which he showed to be a discrete subgroup of $PSL(2,\R)$ for each integer $q\ge 3$ (the \emph{modular group} 
$PSL(2,\Z)$ is the case $q=3$). We shall work in the Poincar\'e disc rather than the upper half-plane. 
Let $\Pi$ be the regular ideal $(d+1)$-gon in the unit disc model of the hyperbolic plane with ideal vertices at the $(d+1)$-st roots of unity. Let us consider two conformal automorphisms of $\D$: $\rho$ is the rotation by $2\pi/(d+1)$ around the origin, and $\sigma$ is the involution preserving one of the sides of $\Pi$ and fixing the Euclidean mid-point of that side (see Figure~\ref{external_model_fig}). The {\it Hecke group} $\mathcal H_{d+1}$ is the group generated by $\sigma$ and $\rho$.

Define the elements $\alpha_j \in \mathcal H_{d+1}$, for $j=1,\cdots, d$ by 
$$\alpha_j:= \sigma\circ\rho^j.$$
We note that $\{\alpha_1,\alpha_2\}$ is a generating set for the Hecke group $\mathcal{H}_{d+1}$.

Before we state the Main Theorem, we introduce terminology and notation for the {\it correspondences} and {\it matings} with which we will be concerned in this article:
\begin{defi}\label{corr_defi}

By a {\it correspondence} $\F$ on the Riemann sphere $\widehat{\C}$ we will mean a multivalued map $z \to w$ (with multivalued inverse $w \to z$) whose graph is a (singular) Riemann surface in $\widehat{\C}\times\widehat{\C}$. By Chow's Theorem such a correspondence is {\it algebraic}. A consequence is the existence of a polynomial $P(z,w)$ such that
$$\F:z \to w \Leftrightarrow P(z,w)=0.$$
We say that $\F$ is {\it holomorphic} if every irreducible factor of $P(z,w)$ is of degree at least one in each of $z$ and $w$. 

Equivalently, a {\it holomorphic correspondence} on $\widehat{\C}$ is a multivalued function $\F=\pi_2\circ \pi_1^{-1}$, where $\pi_1$ and $\pi_2$ are holomorphic branched covering maps from a (closed) Riemann surface $X$ onto $\widehat{\C}$. (See Section 2.3 of \cite{BP1} for more details of these definitions, including proofs of equivalence).

\end{defi}

We note the following terminology and properties:
\begin{enumerate}[leftmargin=12mm]

\item
A holomorphic correspondence $\F:z \to w$ is of {\it bidegree} $(m:n)$ if generically each $w$ has $m$ inverse images and each $z$ has $n$ images. If $\F$ is defined by $P(z,w)=0$, and $P(z,w)$ has no repeated factors of degree $\ge1$, then $\F$ has bidegree $(m:n)$, where $m$ and $n$ are the degrees of $P(z,w)$ in $z$ and $w$ respectively.

\item
The {\it covering correspondence} of a rational map $Q$ is
$$Cov^Q: z \to w \iff Q(z)=Q(w),$$ and the {\it deleted covering correspondence} is
$$Cov_0^Q: z \to w \iff (Q(z)-Q(w))/(z-w)=0.$$

\item
The composition $\G \circ \F$ of two correspondences is defined by
$$w \in \G\circ\F(z) \iff \exists v\in\F(z)\ {\rm such\  that}\ w\in\G(v).$$ (Iteration of a correspondence $\F$ is now defined in the obvious way.)

\item
We note that both $Cov^Q$ and $Cov_0^Q$ are {\it symmetric} correspondences ($z \to w \iff w\to z$) and that $Cov^Q\circ Cov^Q=Cov^Q$.

\item
Given an involution $J$ on $\widehat{\C}$, the composition $J\circ Cov_0^Q$ is the correspondence defined by $z \to w \iff (Q(z)-Q(Jw))(z-Jw)=0$. We remark that $J=Cov_0^P$ for a rational map $P$ of degree $2$.
\end{enumerate}

\begin{defi}\label{mat} 
Let $R$ be a degree $d$ rational map with a parabolic fixed point of multiplier $1$ having a fully invariant and simply connected immediate basin of attraction.
We say that a $d:d$ holomorphic correspondence ${\mathcal F}: \widehat \C \rightarrow \widehat \C$ is a mating between a rational map $R$ and the Hecke group $\mathcal H_{d+1}$ if
\begin{enumerate}[leftmargin=12mm]
\item The dynamics of $\F$ gives rise to a partition of $\widehat \C$ into two non-empty completely invariant subsets: a connected closed set $\cK$ and an open simply connected set $\Omega$;

\item $\cK= \cK_- \cup \cK_+$, where $\cK_- \cap \cK_+=\{p\}$  is a single point, $\cK_+$ is forward invariant, $\cK_-$ is backward invariant, and $\F|_{\cK_-}$
is conformally conjugate to $\F^{-1}|_{\cK_+}$;

\item on a $d$-pinched neighbourhood of $\cK_-$ (pinched at $p$ and its preimages in $\cK_-$), a branch of $\F$ is hybrid equivalent to $R$ restricted to a $d$-pinched neighbourhood of $K(R)$ (pinched at the marked parabolic point and its preimages); and

\item when restricted to a $(d:d)$ correspondence from $\Omega$ to itself, $\mathcal F$ is conformally conjugate to the Hecke group acting on the unit disc. More precisely, there exists a conformal map $\Omega \rightarrow \D$ conjugating the $d$ branches of $\F: \Omega \rightarrow \Omega$ to the elements $\alpha_j:\D \rightarrow \D,\,\,j=1\ldots d$, of~$\mathcal{H}_{d+1}$.
\end{enumerate}
\end{defi}

\noindent (See Figure~\ref{double_covers_figure} (left).)

The principal result of this paper is the following:

\begin{mteor}\label{mainthm}
Let $R$ be a degree $d$ rational map with a parabolic fixed point of multiplier $1$ having a fully invariant and simply connected immediate basin of attraction.
Then there exists a $d:d$ holomorphic correspondence $\F$ on the Riemann sphere $\widehat{\C}$ which is a mating between $R$ and $\mathcal{H}_{d+1}$. Moreover, $\F=J \circ Cov_0^P$, 
where $J$ is a conformal involution and $P$ is a degree $d+1$ polynomial. 
\end{mteor}

Since $\mathcal H_3=PSL(2,\Z)$, we remark that in the case that $d=2$, Theorem \ref{mainthm} follows from part (i) of the Main Theorem of \cite{BL3}.

\subsection{Organization of the paper}\label{paper_org_subsec}

We give two self-contained, independent proofs of our main theorem. The first approach, which involves the Farey map and B-involutions, can be adapted for the construction of holomorphic correspondences realizing matings of other genus zero orbifold groups with complex polynomials \cite{MM23,LLM24}, and was motivated by antiholomorphic matings \cite{LMM23}.
On the other hand, the second approach involving the Hecke map and the double cover technique yields a more direct path from maps to correspondences, and is motivated by holomorphic matings \cite{BP}, \cite{BL1}.

Section~\ref{para_rat_sec} and~\ref{gen_mat_sec} are preparatory in nature, and are indispensable to both the proofs.
In Section~\ref{para_rat_sec}, we provide some background material on parameter space of parabolic rational maps. In Section~\ref{gen_mat_sec}, we give a background on the Douady-Hubbard theory of polynomial-like maps. The key part of this description is the theory of mating between hybrid and external classes/maps. Then we set the stage for a more general theory of pinched polynomial-like maps. In this setting, the objects to mate are parabolic rational maps and piecewise real-analytic, expansive circle maps with a unique parabolic fixed point. 

This point onward, the reader may select which line of arguments they would like to follow (Farey or Hecke). 
\medskip

\noindent\textbf{Road map for the first proof.}  \S\ref{para_rat_sec}$\to$~\ref{gen_mat_sec}$\to$~\ref{hecke_group_subsec}$\to$~\ref{farey_map_subsec}$\to$~\ref{qc_mating_subsec}$\to$~\ref{mating_construct_subsec}$\to$~\ref{farey_pinched_poly_subsec}$\to$~\ref{corr_from_b_inv_sec}.

In Sections~\ref{hecke_group_subsec} and~\ref{farey_map_subsec}, we define the Hecke group formally, and introduce the \emph{Farey map}, which is a piecewise analytic covering map of the circle cooked out of the Hecke group. The Farey map lives on the quotient of the hyperbolic plane by the order $d+1$ symmetry $\rho$. Sections~\ref{qc_mating_subsec},~\ref{mating_construct_subsec}, and~\ref{farey_pinched_poly_subsec} contain the mating construction between the Farey map and parabolic rational maps producing pinched polynomial-like maps. 
Finally, in Section~\ref{corr_from_b_inv_sec}, we give an algebraic description of the pinched polynomial-like maps obtained above, and use it to manufacture the desired algebraic correspondences by lifting the pinched polynomial-like maps by appropriate branched coverings.
\medskip

\noindent\textbf{Road map for the second proof.}
\S\ref{para_rat_sec}$\to$~\ref{gen_mat_sec}$\to$~\ref{hecke_group_subsec}$\to$~\ref{hecke_map_subsec}$\to$~\ref{qc_mating_subsec}$\to$~\ref{mating_construct_subsec}$\to$~\ref{hecke_pinched_poly_subsec}$\to$~\ref{RtoF}.

Sections~\ref{hecke_group_subsec} and~\ref{hecke_map_subsec} are devoted to a description of the Hecke group and the associated piecewise analytic circle covering called the \emph{Hecke map}. The Hecke map lives on the quotient of the hyperbolic plane by the order two symmetry $\sigma$. Sections~\ref{qc_mating_subsec},~\ref{mating_construct_subsec}, and~\ref{hecke_pinched_poly_subsec} contain the mating construction between the Hecke map and parabolic rational maps producing pinched polynomial-like maps. 
Finally, in Section~\ref{RtoF}, we obtain the desired algebraic correspondences by lifting these pinched polynomial-like maps by suitable double coverings. 

The curious reader who wishes to examine both proofs may find it helpful to look at Section~\ref{same_ext_class_subsec} (which describes an explicit conjugacy between the Farey and the Hecke maps) and Appendix~\ref{dictionary_sec} (containing a `dictionary' between the two different constructions of the correspondences).

\subsection*{Acknowledgment.} L.L would like
to thank ICTP, and S.B. would like to thank both ICTP and IMPA for their hospitality at different stages of this project.
Part of this work was done during S.M.'s visit to the IMS at Stony Brook, as well as during visits by M.L. and S.M. to the Fields Institute and the Centre for Nonlinear Analysis and Modeling (CNAM) in Toronto (March 2024), to Urgench State University, Uzbekistan (August 2023), and to IISER Pune, India (January 2024).
M.L. and S.M. gratefully acknowledge these institutions for their hospitality and support.

\section{Parabolic rational maps}\label{para_rat_sec}

Recall that for a degree $d$ rational map $R$ with a parabolic fixed point of multiplier $1$ having a (marked) fully invariant and simply connected immediate basin of attraction $\A(R)$, the filled Julia set $K(R)$ is defined to be $\widehat{\C}\setminus \mathcal{A}(R)$. By the Riemann-Hurwitz formula, $R$ has $d-1$ critical points (counted with multiplicity) in $\A(R)$.
One can employ a standard quasiconformal surgery argument (cf. \cite[Proposition~6.8]{McM}) to merge all the $d-1$ critical points of $R$ in $\A(R)$ into a single critical point of multiplicity $d-1$.
It follows from the above consideration that it suffices to prove Theorem~\ref{mainthm} for rational maps belonging to the following class:

\begin{definition}
Let $\pmb{\mathcal{B}}_d$ be the collection of degree $d\geq 2$ rational maps $R$ satisfying the following properties.
\begin{enumerate}[leftmargin=12mm]
\item $R$ has a parabolic fixed point of multiplier $1$ with a fully invariant and simply connected immediate basin of attraction $\A(R)$.
\item $\A(R)$ contains a unique critical point, which has multiplicity $d-1$.
\end{enumerate}
\end{definition}

Note that for $R\in\pmb{\mathcal{B}}_d$,
$$
R_{|\A(R)}\ \cc{\mathrm{conf}}\ B_{d|\D},
$$
where 
$$
B_d(z)= \frac{z^d+ \frac{d-1}{d+1}}{1+\frac{d-1}{d+1} z^ d}.
$$
Indeed, the conformal conjugacy is given by the Riemann uniformization of $\A(R)$ that sends the unique critical point of multiplicity $d-1$ to the origin and sends the parabolic fixed point to $1$. 
For convenience, we will normalize maps in $\pmb{\mathcal{B}}_d$ so that the marked parabolic fixed point of multiplier $1$ is at $\infty$.

The following supplementary result shows that like the connectedness locus of polynomials (of a fixed degree), the moduli space of the above parabolic rational maps (of a fixed degree) is also compact.

\begin{lemma}
The moduli space $\pmb{\mathcal{B}}_d/\mathrm{PSL_2(\mathbb C)}$ is compact.
\end{lemma}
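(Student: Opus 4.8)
The plan is to prove \emph{sequential} compactness of $\pmb{\mathcal B}_d/\mathrm{PSL}_2(\C)$, which is equivalent since this moduli space is metrizable. First I would normalize representatives. For $R\in\pmb{\mathcal B}_d$ the multiplicity-$(d-1)$ critical point lies in $\A(R)$ while the parabolic fixed point lies on $\partial\A(R)$, so these are distinct points; moreover the parabolic point is simple (one attracting petal), as one checks on the model $B_d$. Hence there is a unique M\"obius change of coordinates putting $R$ (still so named) into the normal form: parabolic point at $\infty$ with $R(z)=z+1+O(1/z)$ near $\infty$, and multiplicity-$(d-1)$ critical point at $0$. This identifies $\pmb{\mathcal B}_d/\mathrm{PSL}_2(\C)$ with a subset $\cS$ of the space $\mathrm{Rat}_d$ of degree-$d$ rational maps, and the task becomes to show $\cS$ is closed and precompact in $\mathrm{Rat}_d$.

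Next I would take $R_n\in\cS$, view the sequence inside a compactification of $\mathrm{Rat}_d$, and pass to a subsequence with $R_n\to R_\infty$. Everything hinges on ruling out degeneration, i.e.\ showing $\deg R_\infty=d$, and the tool is the conformal model of the basin. Write $\psi_n\colon(\D,0)\to(\A(R_n),0)$ for the Riemann uniformization normalized by $\psi_n(0)=0$ and $\psi_n(1)=\infty$ (radial limit), so that $R_n\circ\psi_n=\psi_n\circ B_d$; this $\psi_n$ is unique. The forward orbit of the critical point of $B_d$ (the orbit of $\frac{d-1}{d+1}$, which tends to $1$ along $(0,1)$ at a definite rate), transported by $\psi_n$, together with the normalized parabolic germ of $R_n$ at $\infty$, controls the conformal geometry of $\A(R_n)$ near $0$ and near $\infty$. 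From this I would extract a priori estimates: the conformal radius of $\A(R_n)$ at $0$ stays in a fixed compact subinterval of $(0,\infty)$, and (via Koebe distortion) the critical value $R_n(0)$, the points of $R_n^{-1}(0)$, the $d-1$ finite poles of $R_n$, and the $d-1$ remaining critical points of $R_n$ all stay in a fixed compact subset of $\C$ and bounded away from $0$ and from one another. These estimates exclude any collision of zeros and poles, so $R_\infty\in\mathrm{Rat}_d$.

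To identify the limit: passing to the limit gives $R_\infty(z)=z+1+O(1/z)$ near $\infty$, so $\infty$ is a parabolic fixed point of multiplier $1$; and $R_\infty'$ vanishes to order exactly $d-1$ at $0$ (at least $d-1$ by Hurwitz, exactly because the other $d-1$ critical points stayed away from $0$). Along a further subsequence the domains $\A(R_n)$ converge in the Carath\'eodory sense --- nondegenerately, by the conformal-radius bound --- to a simply connected domain $\A_\infty\ni 0$, the maps $\psi_n$ converge locally uniformly to the uniformization $\psi_\infty\colon\D\to\A_\infty$, and the conjugacy descends: $R_\infty\circ\psi_\infty=\psi_\infty\circ B_d$. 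A short argument (using properness of $R_\infty$ on $\A_\infty$, which follows from full invariance, and the fact that $B_d$ has no other Fatou components) then shows $\A_\infty$ is a fully invariant, simply connected immediate parabolic basin for $R_\infty$ whose only critical point is $0$; hence $R_\infty\in\cS$, and $\cS$ is closed and precompact, so compact.

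The main obstacle is the second step: converting the dynamical--conformal information (the basin uniformized by $\D$ intertwining $R$ with $B_d$, plus the normalization at $\infty$) into genuine Euclidean a priori estimates on the poles and critical value, and in particular preventing the parabolic point at $\infty$ from acquiring higher multiplicity in the limit and the basin from collapsing to a point or swelling up. Once those estimates are in hand, the Carath\'eodory convergence and the transfer of the conjugacy to the limit are routine.
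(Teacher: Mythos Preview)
Your outline is viable, but the paper's proof is considerably shorter because it chooses a different normalization that sidesteps your ``main obstacle'' entirely. Rather than pinning down the parabolic germ as $R(z)=z+1+O(1/z)$, the paper places the parabolic point at $\infty$ and then uses the remaining affine freedom to normalize the \emph{Riemann map} of the basin: $\psi_n(0)=0$, $\psi_n'(0)=1$. Compactness of the class of normalized univalent functions on $\D$ immediately yields a subsequence $\psi_{n_k}\to\psi_\infty$ (univalent), hence Carath\'eodory convergence of the basins for free; then $R_{n_k}=\psi_{n_k}\circ B_d\circ\psi_{n_k}^{-1}$ converges locally uniformly on $\psi_\infty(\D)$, the limit extends to a rational map of degree $\le d$, and the surviving conjugacy with the degree-$d$ map $B_d$ forces the degree to be exactly $d$.

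Your normalization trades this one-line compactness for the two-sided bound on the conformal radius of $\A(R_n)$ at $0$ (equivalently: in the paper's normalization, the coefficient $c_n$ in $R_n(z)=z+c_n+O(1/z)$ stays bounded and bounded away from $0$). That bound is true, and your idea of reading it off from the relation between the attracting Fatou coordinate of $R_n$ at $\infty$ (asymptotic to $z$) and the fixed Fatou coordinate of $B_d$ at $1$ can be made to work --- the conjugacy $\psi_n$ intertwines the two Fatou coordinates up to an additive constant, and the image of the critical value pins that constant down --- but carrying this through, and then parlaying it via Koebe into control on all zeros, poles, and critical points of $R_n$, is genuinely more labor than the paper's route. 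The payoff of your approach would be a more explicit description of the slice $\cS\subset\mathrm{Rat}_d$; the paper's approach buys brevity by letting the compactness of schlicht functions do all the heavy lifting.
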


\begin{proof}
Let $(R_n,\mathcal{A}(R_n))$ be a sequence of rational maps in $\pmb{\mathcal{B}}_d$ and their corresponding marked basins. By definition, there exist conformal maps $\psi_n: \mathbb D \to \mathcal{A}(R_n)$ that conjugate $B_d$ to $R_n$. After possibly conjugating $R_n$ by an affine map, we may assume that $\psi_n(0) = 0, \psi_n'(0) =1$. By compactness of normalized univalent functions, there exists a subsequence $\{\psi_{n_k}\}$ converging to some univalent map $\psi_\infty$. Thus the pointed domains $(\mathcal{A}(R_{n_k}),0)$ converge to the pointed domain $(\psi_\infty(\mathbb D),0)$ in the Carath\'eodory topology.
As $R_{n_k}\vert_{\mathcal{A}(R_{n_k}) }= \psi_{n_k} \circ B_d \circ \psi^{-1}_{n_k}$, these rational maps converge locally uniformly to some holomorphic map $R_\infty\colon \psi_\infty(\D) \to \widehat{\C}$. The fact that each $R_{n_k}$ is a rational map of degree $d$ implies that $R_\infty$ extends to a rational map of $\widehat{\C}$, of degree at most $d$. Finally, the conformal map $\psi_\infty^{-1}$ conjugates $R_\infty\vert_{\psi_\infty(\D)}$ to $B_d\vert_{\D}$, and hence $R_\infty$ must have degree $d$.
It is easy to see that $R_\infty$ has a parabolic point at $\infty$ and that $\psi_\infty(\mathbb D)$ is the desired marked immediate basin of $\infty$.
\end{proof}

We illustrate the dynamical behaviour of $B_d$ for $d=2,3$ and $4$ in Figure \ref{dyn_B}. In each plot the innermost curve bounds an attracting petal $\mathcal P$ containing the critical value of $B_d$. We 
choose this curve, $\partial \mathcal P$, to subtend a non-zero angle at the parabolic point $1\in \overline\D$ (and to be non-tangential to the unit circle there). Only the first few inverse image of $\partial\mathcal P$
are plotted, as otherwise the parabolic nature of the fixed point makes for a very congested picture. We remark that the dynamics of each $B_d$ on $\widehat\C\setminus\overline{\D}$ is an exact copy of its behaviour 
inside $\D$, by Schwarz reflection.

\begin{figure}
\captionsetup{width=0.96\linewidth}
\centering
\includegraphics[width=3cm]{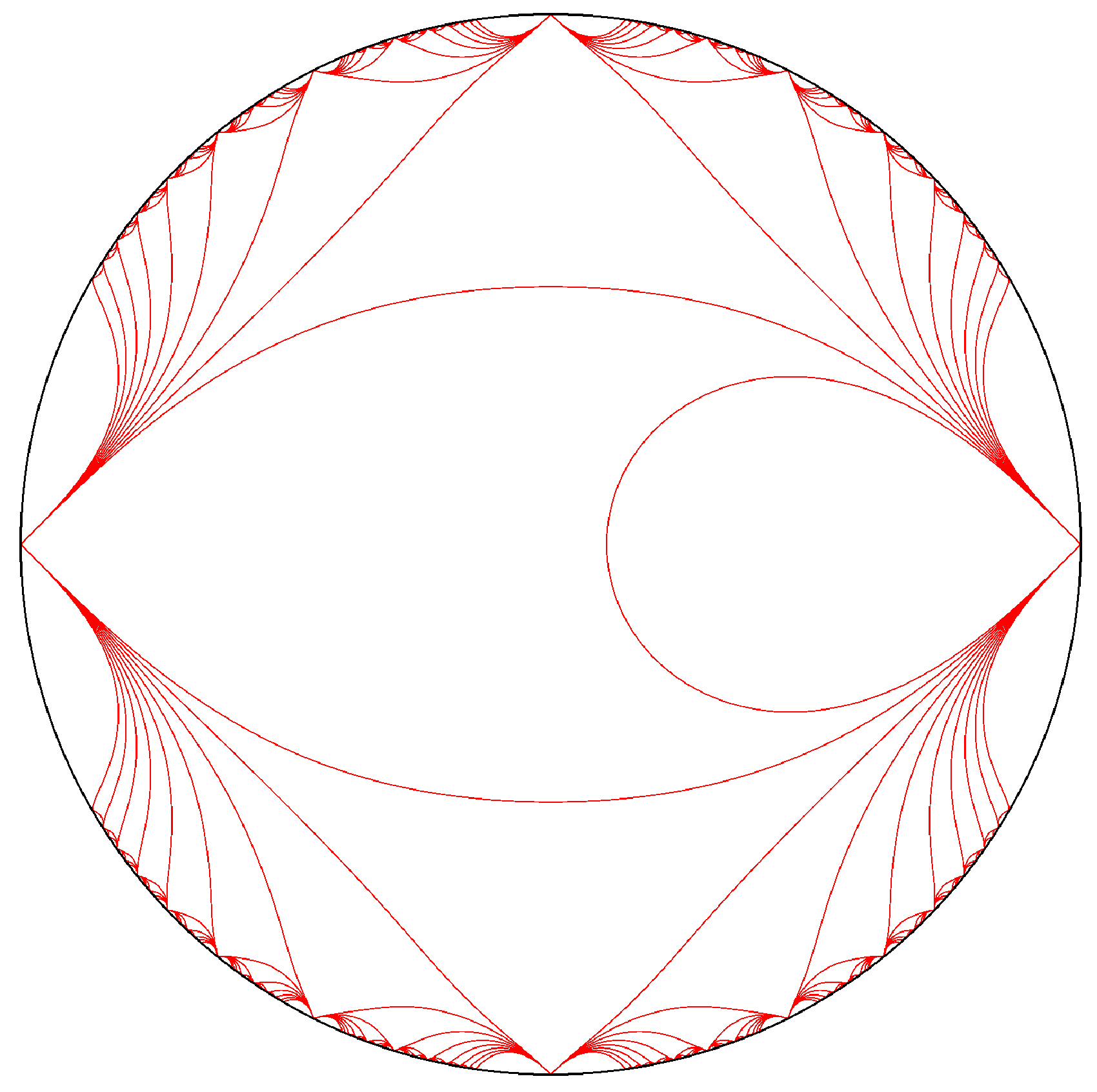} \hspace{1cm} \includegraphics[width=3cm]{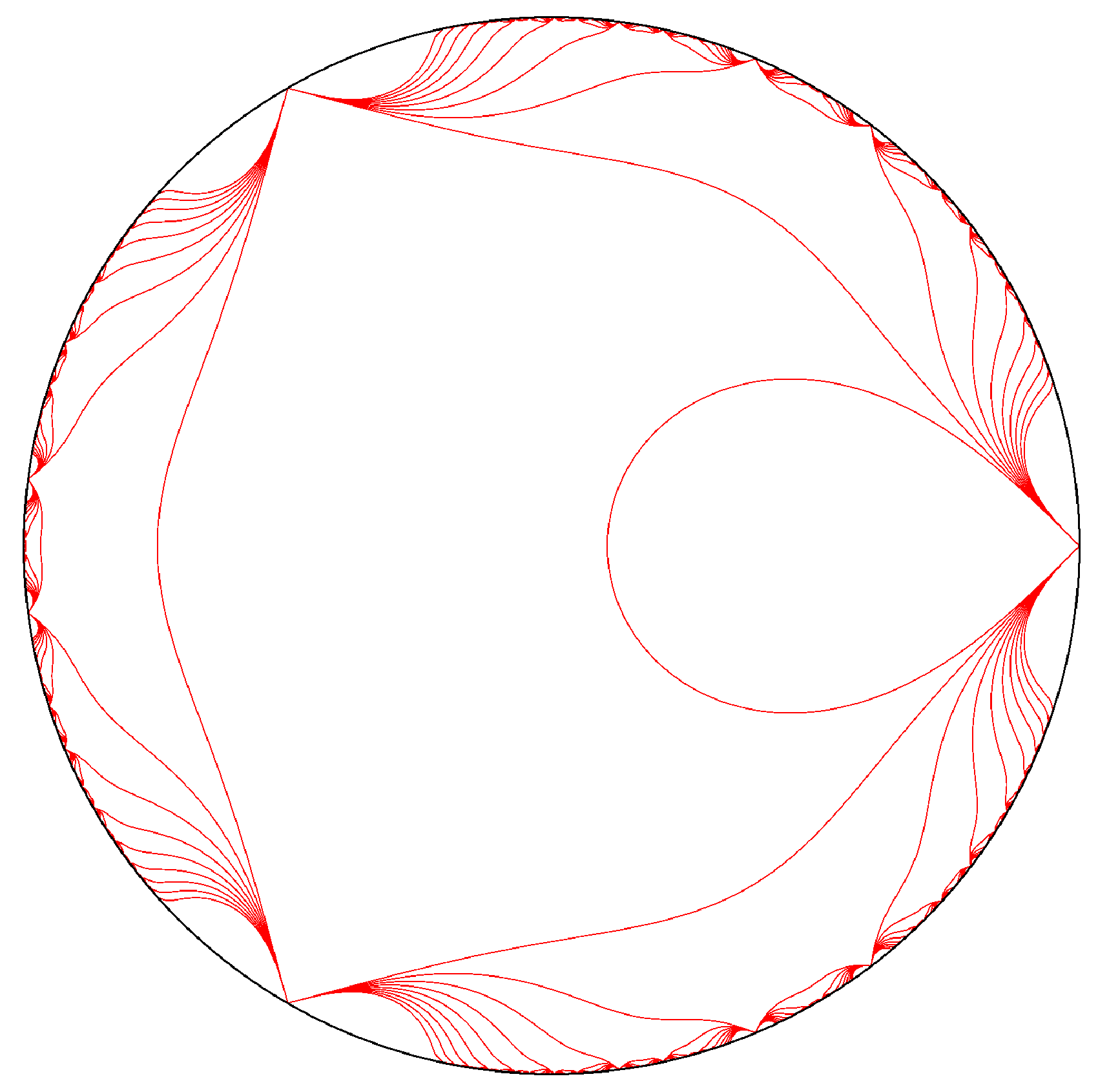} \hspace{1cm} \includegraphics[width=3cm]{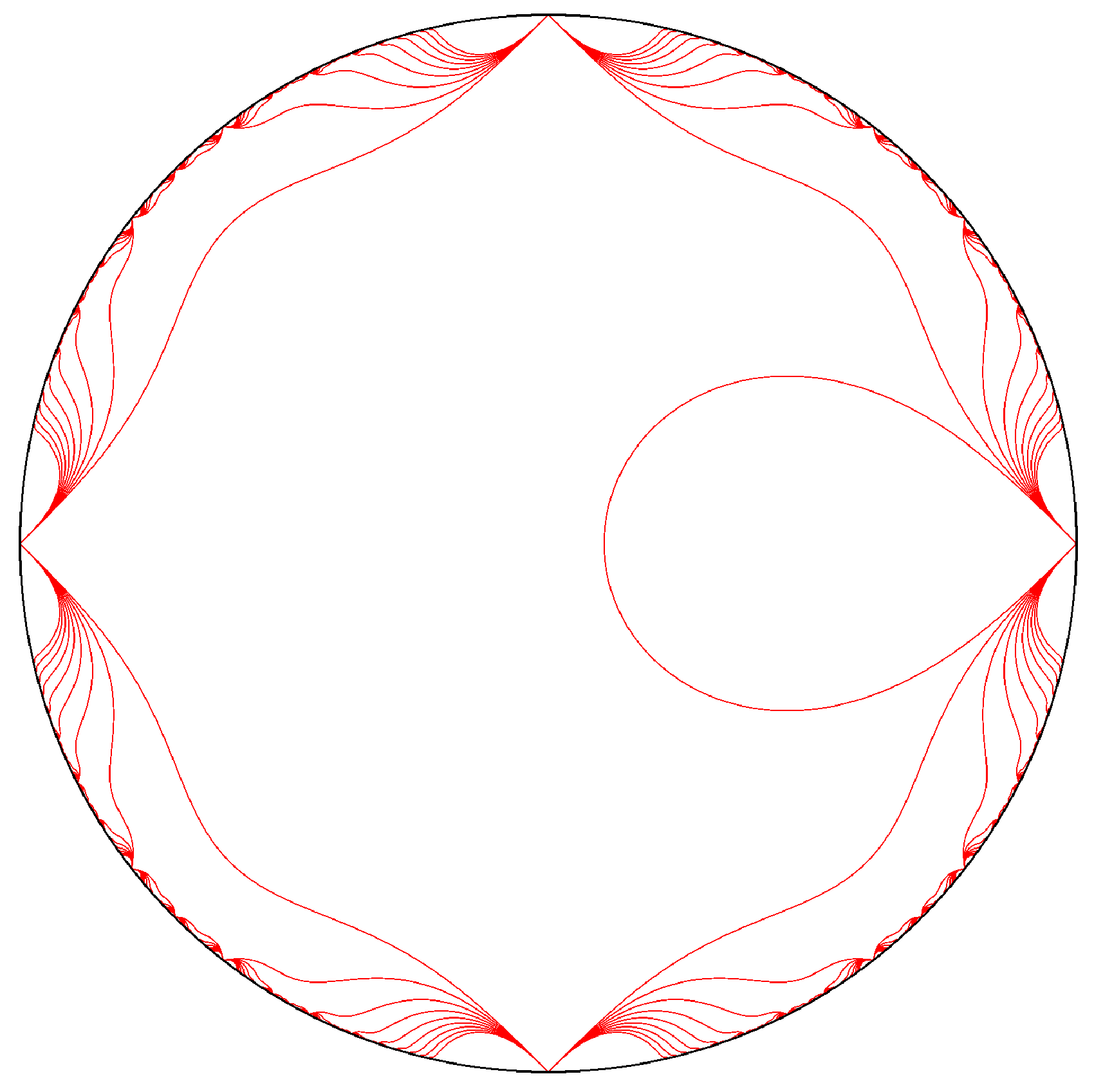} 
\caption{Dynamical behaviour of the Blaschke product maps $B_2$, $B_3$ and $B_4$ on the closed unit disc $\overline\D$.}\label{dyn_B}
\end{figure}

\section{Polynomial-like maps and pinched  polynomial-like maps}\label{gen_mat_sec}

As mentioned in the introduction, a first step in the construction of algebraic correspondences that are matings of parabolic rational maps and the Hecke group is to manufacture a partially defined holomorphic map on the sphere. This map captures certain dynamical features of the rational map as well as of the Hecke group, and can itself be regarded as a mating in a suitable sense. In fact, the mating structure of this intermediate holomorphic map can be explicated using an appropriate generalization of the classical notion of polynomial-like maps. In this section, we recall some generalities from the classical theory of polynomial-like maps, and set the stage for our main constructions.

\subsection{Quadratic-like maps: hybrid class and external map}

\begin{definition}\label{ql_map_def}
A \emph{quadratic-like map} $f:U\to V$ is a holomorphic double branched covering between two conformal discs $U$ and $V$ in $\C$ such that $U\Subset V$. The set of non-escaping points
$$
K(f):=\{z:f^{\circ n}(z)\in U,\ n=0,1,2,\cdots\}
$$
is called the \emph{filled Julia set} of $f$. Its boundary $J(f):=\partial K(f)$ is called the \emph{Julia set} of $f$
\end{definition}

The dynamics of a quadratic-like map can be decomposed into \emph{internal} and \emph{external} dynamics. In other words, a quadratic-like map can be regarded as the \emph{mating} of its internal and external dynamics in a precise way which we now formulate.

Two quadratic-like maps $f_j:U_j\to V_j$, $j\in\{1,2\}$, are said to be \emph{quasiconformally conjugate} if there exists a quasiconformal homeomorphism $h:(V_1, U_1)\to (V_2,U_2)$ such that 
$$
h\circ f_1=f_2\circ h\quad \textrm{on}\quad U_1.
$$ 
The maps $f_1, f_2$ are called \emph{hybrid conjugate/equivalent} if they are quasiconformally conjugate by a map $h$ with $\overline{\partial} h = 0$ a.e. on the filled Julia set $K(f_1)$.
The equivalence classes of hybrid conjugate quadratic-like maps are called \emph{hybrid classes}.
 
The hybrid class of a quadratic-like map captures its `internal' dynamics. By the Douady-Hubbard Straightening theorem \cite{DH}, each hybrid class of quadratic-like maps with connected Julia set contains a unique quadratic polynomial (up to affine conjugacy) with connected Julia set, or equivalently, a unique parameter in the Mandelbrot set. Thus, the Mandelbrot set is a catalog of all possible internal dynamics displayed by quadratic-like maps with connected Julia set.

If a quadratic-like map $f:U\to V$ has connected Julia set, one can capture its `external dynamics' by a real-analytic expanding circle map as follows. We consider a conformal map $\psi_f:(\widehat{\C}\setminus K(f),\infty)\to(\widehat{\C}\setminus\overline{\D},\infty)$, and set
$$
\Omega:= \psi_f(U\setminus K(f)),\quad \Omega':=\psi_f(V\setminus K(f)).
$$
Conjugating $f$ by $\psi_f$, we obtain a holomorphic double covering $\cE:\Omega\to\Omega'$. Since $\Omega, \Omega'$ are conformal annuli with common inner boundary $\mathbb{S}^1$, the Schwarz Reflection Principle allows us to extend $\cE$ to an annular neighbourhood of $\mathbb{S}^1$ such that the extended map $\cE$ restricts to a real-analytic double covering of the circle. Moreover, the fact that the outer boundary of $\Omega$ is contained in $\Omega'$, translates to the expanding property of $\cE$. We note that as $\psi_f$ is unique up to post-composition with a rotation, the circle endomorphism $\cE$ is also unique up to conjugation by a rotation. The map $\cE$ is called the \emph{external map} of $f$.

Equivalence classes of real-analytic expanding double coverings of the circle under the equivalence relation induced by real-analytic circle conjugacy are called \emph{external classes}. Although two conformally conjugate quadratic-like maps are dynamically indistinguishable, for many purposes (for instance, in renormalization theory), it is more convenient to consider the space of quadratic-like maps up to affine conjugacy instead of conformal conjugacy. This perspective, which was introduced in \cite{Lyu99}, will also be important in the current paper. Hence, we remark that while the conformal conjugacy class of a quadratic-like map is uniquely determined by its hybrid class and external class, one needs to specify the hybrid class and an external map (i.e., a specific representative from the external class) to determine a quadratic-like map uniquely up to affine conjugacy.

By a quasiconformal surgery argument (similar to the one used in the proof of the Straightening Theorem), one can show that any hybrid class of quadratic-like maps can be \emph{mated} with any quadratic external map; i.e, there exists a quadratic-like map with prescribed hybrid class and external map (cf. \cite[Section I.4]{DH}, \cite{Lyu24}). Further, if the hybrid class has connected Julia set, then the mating is unique up to affine conjugacy.
 
Finally, an \emph{external fiber} or \emph{vertical fiber} in the space of quadratic-like maps (up to affine conjugacy) is the collection of all quadratic-like maps admitting a given real-analytic expanding double covering of the circle as their external map.

\subsection{Polynomial-like maps}

With minor adjustment, everything that was said about quadratic-like maps above holds for arbitrary degree.

\begin{definition}\label{pl_map_def}
A \emph{polynomial-like map} $f:U\to V$ is a holomorphic branched covering of degree $d\geq 2$ between two conformal discs $U, V\subset \C$ with $U\Subset V$. 
\end{definition}

The notion of hybrid classes and external maps can be defined verbatim for general polynomial-like maps. However, to make sense of certain uniqueness statements, it is convenient to mark an invariant external access to $J(f)$. 

The Straightening Theorem then asserts that a degree $d$ polynomial-like map $f$ with connected Julia set is hybrid equivalent to a unique monic, centered, degree $d$ polynomial $P$ with connected Julia set (i.e., a unique member of the connectedness locus of monic centered polynomials) such that the hybrid conjugacy carries the marked invariant access to $J(f)$ to the access to $J(P)$ defined by the $0-$ray (where the B{\"o}ttcher coordinate of $P$ is taken to be tangent to the identity at $\infty$) \cite{DH}, \cite[Section~1]{IK}.

Note also that an invariant external access to $J(f)$ determines a marked fixed point for the external map of $f$.
The mechanism of mating hybrid classes with external maps yields the following statement: given an externally marked hybrid class $[f]$ of degree $d$ polynomial-like maps with connected Julia set and an expanding, real-analytic degree $d$ circle covering $\cE$ with a marked fixed point, there exists a degree $d$ polynomial-like map $F$, unique up to affine conjugacy, satisfying the following properties:
\begin{enumerate}
\item $F$ is hybrid equivalent to $f$;
\item $F$ has $\cE$ as its external map; and 
\item the hybrid conjugacy between $f$ and $F$ carries the invariant external access to $J(f)$ to an invariant external access to $J(F)$ such that this invariant external access to $J(F)$ determines the marked fixed point of~$\cE$.
\end{enumerate}

As in the quadratic setting, an \emph{external fiber} in the space of (marked) degree $d$ polynomial-like maps (up to affine conjugacy preserving the marking) is the sub-collection of all maps admitting a given real-analytic expanding degree $d$ circle covering as their external map.

\subsection{Parabolic-like maps}\label{para_like_maps_subsec}
Polynomial-like maps are objects that locally behave as polynomials about their filled Julia set. As we said in the previous subsection, a polynomial-like map of degree $d$ is determined up
to holomorphic conjugacy by its (marked) internal and external classes. In particular the external class is a degree $d$ real-analytic orientation
preserving and strictly expanding self-covering of the unit circle: the expanding feature of such a
circle map implies that all the periodic points are repelling, and in particular
not parabolic. So, polynomial-like maps do not model parabolic rational maps in $\pmb{\mathcal{B}}_d$.
To extend the Douady-Hubbard theory to parabolic settings, the second author introduced  \textit{parabolic-like maps}, local objects encoding the dynamics of rational maps in $\pmb{\mathcal{B}}_d$ in a neighbourhood of the filled Julia set (see  \cite{L1} for a precise definition), and proved that any degree $2$ parabolic-like map is hybrid equivalent to a parabolic rational map of the form $P_A(z)=z+1/z+A$, a unique such member if the filled Julia set is connected (see \cite{L1} and \cite{L2}).

A parabolic-like mapping is thus similar to, but different from, a polynomial-like mapping.
The similarity resides in the fact that a parabolic-like map is a local concept,
characterized by its internal and external class. The difference resides in the fact that
the external map of a degree $d$ parabolic-like mapping
is a degree
$d$ real-analytic orientation preserving self-covering of
the unit circle topologically expanding (this is, expansive) with parabolic fixed points (see \cite{LPS}).

\subsection{Pinched polynomial-like maps}\label{pinched_poly_like_subsec}
Parabolic-like maps are objects defined on a neighbourhood of a parabolic fixed point. It is useful to consider objects which can have `parabolic points' \textit{on the boundary} of the domain. This motivates us to introduce another class of objects, which we now formalize. We note that similar notions already appeared in various contexts in the literature \cite{DH,Mak93,BF2,LMM23}.

A \textit{polygon} is a closed Jordan disc in $\widehat{\C}$ with a piecewise smooth boundary. The points of intersection of these smooth boundary curves are called the \emph{break-points} or \emph{corners} of the polygon.

A \textit{pinched polygon} is a set in $\widehat{\C}$ which is homeomorphic to a closed disc quotiented by a finite geodesic lamination, and which has  a piecewise smooth boundary. The cut-points of a pinched polygon will be called its \emph{pinched points}, and the non-cut points of intersection of the smooth boundary curves are called the \emph{corners} of the pinched polygon. 

\begin{definition}\label{pinched_poly_def}
Let $P_2\subset \widehat{\C}$ be a polygon, and let $P_1\subset P_2$ be a pinched polygon such that $\partial P_1\cap \partial P_2$ is the set of corners of $P_2$ and is contained in the set of corners of $P_1$. 

Suppose that there is a holomorphic map $f\colon \Int{P_1}\to \Int{P_2}$ such that
\begin{enumerate}[leftmargin=10mm]
\item $f$ is a branched cover from each component of $\Int{P_1}$ onto $\Int{P_2}$;
\item $f$ extends continuously to the boundary of $\Int{P_1}$; and
\item the corners and pinched points of $P_1$ are the preimages of the corners of $P_2$.
\end{enumerate}

We then call the triple $(f,P_1, P_2)$ a \emph{pinched polynomial-like map}.
\end{definition}
(See Figure~\ref{pinched_poly_fig}.)

\begin{remark}
We assume that $P_1$ is a pinched polygon (instead of requiring it to be a polygon) in order to allow some of the corners of $P_2$ to be critical values of $f$.  
\end{remark}

\begin{figure}
\captionsetup{width=0.96\linewidth}
\begin{tikzpicture}
\node[anchor=south west,inner sep=0] at (1,0) {\includegraphics[width=0.6\textwidth]{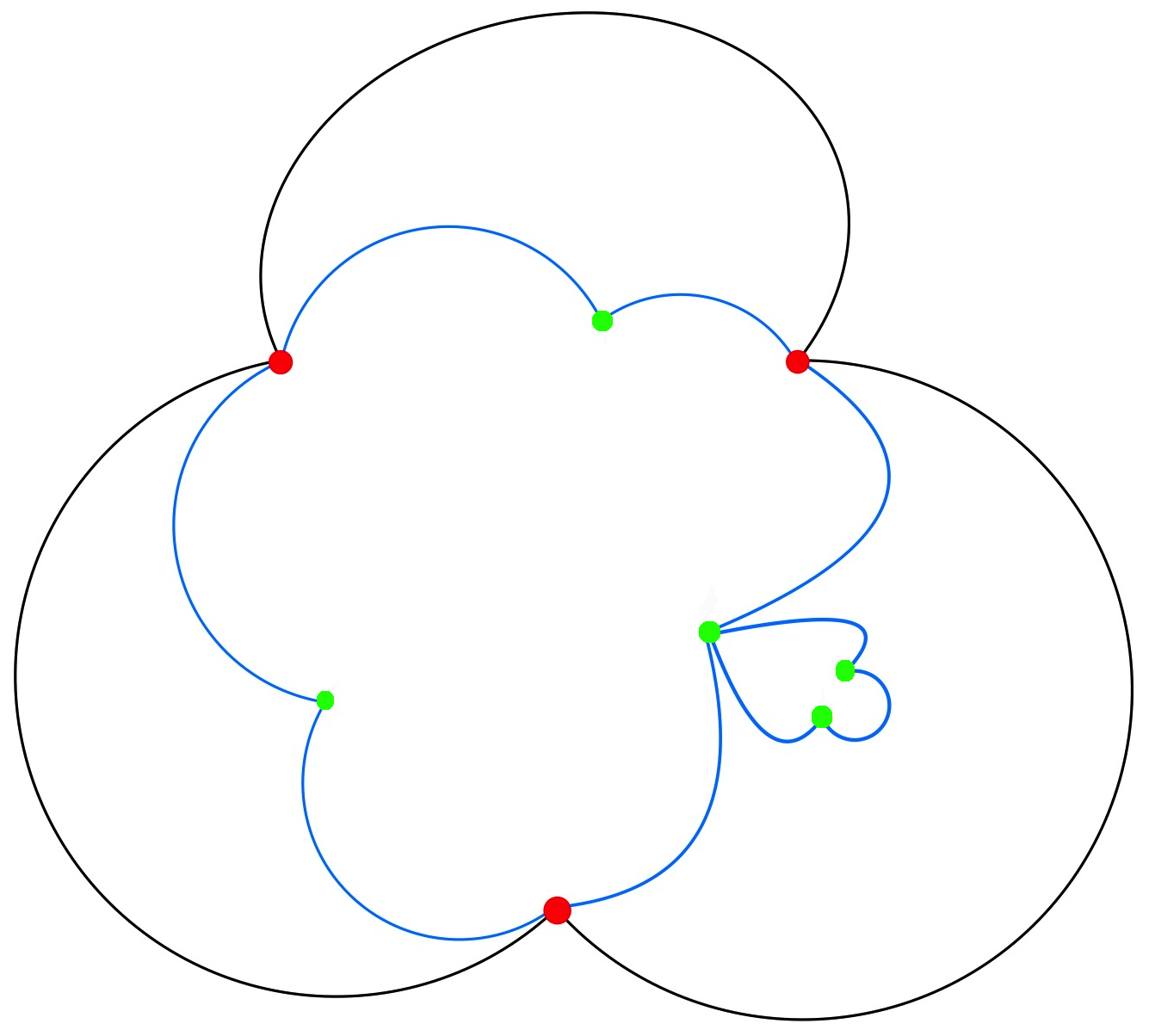}}; 
\node at (1.36,4.2) {\begin{Large}$P_2$\end{Large}};
\node at (2.7,1.4) {\begin{Large}$P_1$\end{Large}};
\end{tikzpicture}
\caption{Pictured is the domain and codomain of a pinched polynomial-like map. Here, $\Int{P_2}$ is the interior of the black polygon with three corners (marked in red). The interior of the blue pinched polygon is $\Int{P_1}$. The pinched point and the additional corner points of $P_1$ are marked in green.}
\label{pinched_poly_fig}
\end{figure}

As with polynomial-like maps, we define the \textit{filled Julia set} $K(f)$ (also called the \textit{non-escaping set}) of a pinched polynomial-like map to be $K(f):=\bigcap_{n\geq 0} f^{-n}(P_1)$. Similar to classical polynomial-like maps, the filled Julia set $K(f)$ of a pinched polynomial-like map is connected if and only if it contains all of the critical values of $f$.

\begin{definition}
1) Let $Z$ be a closed set in $\widehat{\C}$. A subset $\widehat{Z}$ of $\widehat{\C}$ is said to be a \emph{pinched neighbourhood} of $Z$, pinched at finitely many points $z_1,\cdots,z_n\in\partial Z$, if $Z\subset\widehat{Z}$, each point of $Z\setminus\{z_1,\cdots,z_n\}$ is an interior point of $\widehat{Z}$, but no $z_i$ is an interior point of $\widehat{Z}$. We will sometimes refer to $\widehat{Z}$ as an $n$-pinched neighbourhood of $Z$.
\smallskip

2) For a pinched neighborhood $N$ of $\mathbb{S}^1$, the intersection $N\cap\overline{\D}$ is called a \emph{one-sided pinched neighbourhood} of $\mathbb{S}^1$. 
\end{definition}

\begin{definition}\label{hybrid_def}
Let $(f_1,P_1, P_2)$ and $(f_2,Q_1,Q_2)$ be two pinched polynomial-like maps. We say that $f_1$ and $f_2$ are \textit{hybrid equivalent} if there exists a quasiconformal map $\Phi\colon \widehat{\C}\to \widehat{\C}$ such that:

\begin{enumerate}
	\item $\Phi$ sends the corners of $P_2$ onto the corners of $Q_2$,
	\item $\Phi$ conjugates $f_1$ to $f_2$ on a pinched neighbourhood of $K(f_1)$ pinched at the corners and pinched points of $P_1$,
	\item $\overline \partial \Phi\equiv 0$ almost everywhere on $K(f_1)$.
\end{enumerate}
\end{definition}

Every polynomial-like map is a pinched polynomial-like map. Moreover, every rational map $R\in\pmb{\mathcal{B}}_d$ admits a pinched polynomial-like restriction. Indeed, if $D\subsetneq \mathcal{A}(R)$ is an attracting petal containing a fully ramified critical value such that $P_2 = \widehat{\C}\setminus D$ is a polygon, then $(R|_{R^{-1}(P_2)}, P_1=R^{-1}(P_2), P_2)$ is a pinched polynomial-like map with filled Julia set $\mathcal{K}(R)$.

Hybrid equivalence is clearly an equivalence relation on the space of pinched polynomial-like maps. We refer to the corresponding equivalence classes as \emph{hybrid classes} of pinched polynomial-like maps.

\begin{figure}[h!]
\captionsetup{width=0.96\linewidth}
\begin{tikzpicture}
\node[anchor=south west,inner sep=0] at (0,0) {\includegraphics[width=0.99\textwidth]{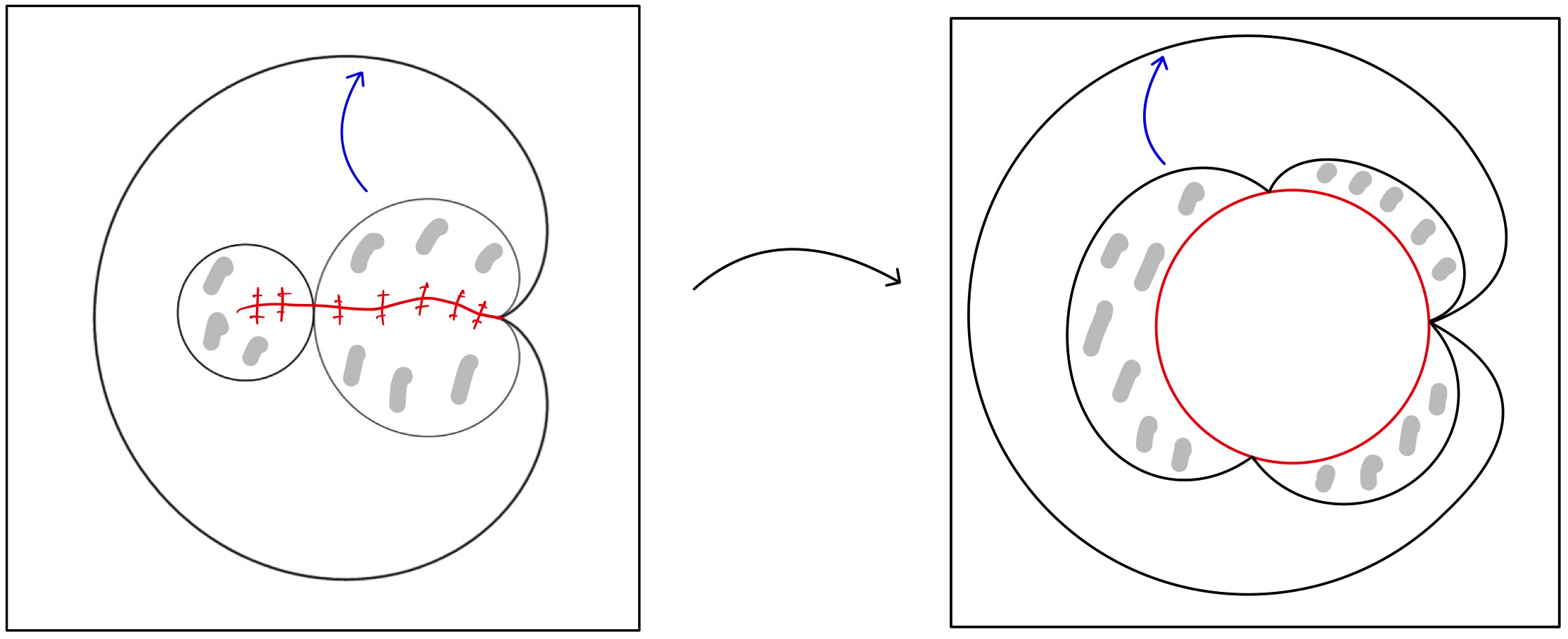}}; 
\node at (10.32,2.56) {$\overline{\D}$};
\node at (9,4.2) {$\cE$};
\node at (4,0.6) {$P_2$};
\node at (3,1.45) {$P_1$};
\node at (2.56,4) {$f$};
\node at (3.3,2.4) {\begin{tiny}$K(f)$\end{tiny}};
\node at (6.36,3.42) {$\psi_f$};
\end{tikzpicture}
\caption{The Riemann map $\psi_f:\widehat{\C}\setminus K(f)\to\widehat{\C}\setminus\overline{\D}$ conjugates the action of $f$ (outside $K(f)$) to a piecewise analytic map on the interior of} a one-sided pinched neighbourhood of $\mathbb{S}^1$. By the Schwarz reflection principle, this map can be extended to a piecewise analytic map $\cE$ on a pinched neighbourhood of $\mathbb{S}^1$ that restricts to a piecewise real-analytic circle covering.
\label{ext_class_fig}
\end{figure}

For a pinched polynomial-like map $(f,P_1, P_2)$ with connected filled Julia set, one can define the notion of an \emph{external map} following the classical construction of external maps of polynomial-like maps (or parabolic-like maps, see Figure~\ref{ext_class_fig}). However, unlike in the classical polynomial-like case, the external map of a pinched polynomial-like map is in general a \emph{piecewise} real-analytic circle covering that is not necessarily expanding. 

Thus, the dynamics of a pinched polynomial-like map also decomposes into internal and external dynamics, which are captured by its hybrid class and external map (respectively). 
However, in this generality, one cannot expect a straightening result for pinched polynomial-like maps, nor can one expect to be able to mate an arbitrary hybrid class with a piecewise real-analytic circle covering.

\smallskip

\textbf{A standing assumption:} In what follows, an \emph{external map} will stand for a piecewise real-analytic, expansive, degree $d\geq 2$ circle covering with the additional regularity condition that the two branches of the map at each break-point of its piecewise definition admit local analytic extensions. 

We remark that this class of external maps enlarges the class of parabolic external maps considered in \cite{LPS} (which are real-analytic expansive degree $d\geq 2$ circle coverings) to piecewise real-analytic circle maps.

\begin{definition}\label{ppl_ext_conj_def}
Let $\cE_1,\cE_2$ be two degree $d$ external maps. We say that they are \emph{piecewise analytically conjugate} if there exists a homeomorphism $\phi:\overline{\D}\to\overline{\D}$ such that
\begin{enumerate}
\item $\phi$ is quasiconformal on $\D$;
\item $\phi$ carries the break-points of the piecewise definition of $\cE_1$ onto those of $\cE_2$;
\item $\phi$ is conformal in the interior of a one-sided pinched neighbourhood of $\mathbb{S}^1$ pinched at the break-points of $\cE_1$; and
\item $\phi\circ \cE_1=\cE_2\circ\phi$ on $\mathbb{S}^1$.
\end{enumerate}
\end{definition}

\begin{remark}
One immediately observes that the last condition of Definition~\ref{ppl_ext_conj_def} implies that the maps $\cE_1$ and $\cE_2$ are conformally conjugate in a pinched neighbourhood of the circle.
\end{remark}

As in the classical situation, we equip a pinched polynomial-like map (with connected Julia set) with an invariant external access to its Julia set, and consider the space of degree $d$ pinched polynomial-like maps with connected Julia set up to M{\"o}bius conjugacy preserving the marking. An \emph{external fiber} in this space is the sub-collection of all maps with a given fixed external map.

\begin{lemma}\label{homeo_fibers_lem}
Let $\cE_1$ and $\cE_2$ be piecewise analytically conjugate external maps. Then the `straightening map' between the corresponding external fibers is a bijection. 
\end{lemma}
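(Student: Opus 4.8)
I take the ``straightening map'' to be the quasiconformal surgery that uses the given conjugacy $\phi$ to replace the external map $\cE_1$ of a member of $\mathrm{Fib}(\cE_1)$ by $\cE_2$, leaving its hybrid class unchanged; here $\mathrm{Fib}(\cE_i)$ denotes the external fiber of $\cE_i$, i.e.\ the degree $d$ pinched polynomial-like maps with connected Julia set and external map $\cE_i$, taken up to M\"obius conjugacy preserving the marked external access. The plan is to write down this map $\Xi\colon\mathrm{Fib}(\cE_1)\to\mathrm{Fib}(\cE_2)$ explicitly, check that it lands where claimed, and then show that the straightening map $\Xi'$ built in exactly the same way from $\phi^{-1}$ is a two-sided inverse of $\Xi$.

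For the construction I would extend $\phi\colon\overline{\D}\to\overline{\D}$ by Schwarz reflection across $\mathbb{S}^1$ to a quasiconformal homeomorphism $\Phi$ of $\widehat{\C}$. By conditions (3) and (4) of Definition~\ref{ppl_ext_conj_def} and the remark following it, $\Phi$ is conformal on a neighbourhood of $\mathbb{S}^1$ pinched at the break-points and there conjugates the reflected extension of $\cE_1$ to that of $\cE_2$. Given $f\in\mathrm{Fib}(\cE_1)$, pass to a pinched polynomial-like representative $f\colon\Int P_1\to\Int P_2$ with $P_1,P_2$ close enough to $K(f)$ that the fundamental pinched annulus $\Int P_2\setminus\overline{\Int P_1}$ and its first $f$-preimage, transported off $K(f)$ by the Riemann map $\psi_f\colon\widehat{\C}\setminus K(f)\to\widehat{\C}\setminus\overline{\D}$, lie in that pinched neighbourhood of $\mathbb{S}^1$. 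Let $\mu$ be the Beltrami coefficient of $\Phi\circ\psi_f$ on $\widehat{\C}\setminus K(f)$ and $\mu\equiv 0$ on $K(f)$. Using that $\psi_f$ conjugates $f$ to the reflected extension of $\cE_1$ off $K(f)$, and that $\Phi$ conjugates $\cE_1$ to $\cE_2$ (which, with conformality of $\cE_2$ away from break-points, makes the Beltrami coefficient of $\Phi$ invariant under $\cE_1$ on the relevant annuli), one checks that $\mu$ is $f$-invariant; it has norm $<1$ since $\psi_f$ is conformal. Let $h$ integrate $\mu$, normalised through the marking, and set $\Xi(f):=h\circ f\circ h^{-1}$. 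Since $\mu\equiv 0$ on $K(f)$, the map $h$ is conformal there, so $\Xi(f)$ is holomorphic and hybrid equivalent to $f$; since $\Phi$ is conformal near $\mathbb{S}^1$ away from the finitely many break-points, $h$ is conformal near $\partial K(f)$ away from finitely many corners and pinched points, so $\Xi(f)$ is again a pinched polynomial-like map; and since $\Phi\circ\psi_f$ and $h$ carry the same Beltrami coefficient, $\Phi\circ\psi_f\circ h^{-1}$ is conformal, hence (after the marking-induced normalisation) equals $\psi_{\Xi(f)}$, from which conjugating shows the external map of $\Xi(f)$ is $\Phi\circ\cE_1\circ\Phi^{-1}=\cE_2$. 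Routine checks (independence of the normalisation and of the chosen representative) make $\Xi$ a well-defined, hybrid-class-preserving map into $\mathrm{Fib}(\cE_2)$.

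To invert $\Xi$: $\phi^{-1}$ is itself a piecewise analytic conjugacy, from $\cE_2$ to $\cE_1$, and Schwarz reflection commutes with inversion, so its reflected extension is $\Phi^{-1}$; let $\Xi'\colon\mathrm{Fib}(\cE_2)\to\mathrm{Fib}(\cE_1)$ be the straightening map it defines. For $g=\Xi(f)=h\circ f\circ h^{-1}$ I would use $\psi_g=\Phi\circ\psi_f\circ h^{-1}$ to compute that the Beltrami coefficient defining $\Xi'(g)$ — namely that of $\Phi^{-1}\circ\psi_g$ off $K(g)$, and $0$ on $K(g)$ — equals the Beltrami coefficient of $h^{-1}$ everywhere: off $K(g)$ because $\Phi^{-1}\circ\psi_g=\psi_f\circ h^{-1}$ with $\psi_f$ conformal, and on $K(g)$ because $h^{-1}$ is conformal there. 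Hence the integrating map $h'$ for $\Xi'(g)$ equals $c\circ h^{-1}$ for some M\"obius map $c$, so $\Xi'(g)=h'\circ g\circ h'^{-1}=c\circ f\circ c^{-1}$ is M\"obius conjugate to $f$ by a conjugacy respecting the marking. Thus $\Xi'\circ\Xi=\mathrm{id}_{\mathrm{Fib}(\cE_1)}$, and the identical argument with $\phi$ and $\phi^{-1}$ interchanged gives $\Xi\circ\Xi'=\mathrm{id}_{\mathrm{Fib}(\cE_2)}$, so $\Xi$ is a bijection.

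The step I expect to be the crux is the $f$-invariance of $\mu$. This is precisely where the hypothesis is needed: one requires $\phi$ to conjugate $\cE_1$ to $\cE_2$ conformally on a whole pinched neighbourhood of $\mathbb{S}^1$ (condition (3) of Definition~\ref{ppl_ext_conj_def}), not merely on the circle, so that the Beltrami coefficient of $\Phi$ is genuinely $\cE_1$-invariant on the dynamically relevant annuli, and one must first shrink the pinched polynomial-like representative so those annuli lie inside that neighbourhood. Once this is arranged the pinched structure causes no further difficulty, since $\Phi$ — and with it $h$ — is conformal near $\mathbb{S}^1$ away from a finite set, so the piecewise-smooth boundary and the cut-point pattern are preserved automatically; and the expansiveness of the external maps together with the presence of parabolic fixed points, essential in the mating-existence results, play no role in this lemma.
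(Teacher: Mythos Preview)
Your proposal is correct and follows the same core approach as the paper's sketch: use the piecewise analytic conjugacy $\phi$ (extended quasiconformally to $\D$) to perform quasiconformal surgery that replaces the external map $\cE_1$ by $\cE_2$ while preserving the hybrid class, and build the reverse map from $\phi^{-1}$.

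The one genuine difference is in the bijectivity argument. The paper's sketch simply invokes the principle that a (marked) pinched polynomial-like map is uniquely determined by its (marked) hybrid class and external map, so that the two surgeries must be mutual inverses. You instead carry out an explicit Beltrami-coefficient computation: writing $\psi_g=\Phi\circ\psi_f\circ h^{-1}$ and observing that $\Phi^{-1}\circ\psi_g=\psi_f\circ h^{-1}$, you identify the coefficient defining $\Xi'(\Xi(f))$ with that of $h^{-1}$ and conclude directly. Your route is more self-contained (it does not rely on a uniqueness statement that the paper only asserts by analogy with the classical case), at the cost of a bit more bookkeeping; the paper's route is shorter but leans on that background fact.
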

\begin{proof}[Sketch of proof]
The piecewise analytic conjugacy (which extends quasiconformally to $\D$) between $\cE_1$ and $\cE_2$ can be used to pass from a pinched polynomial-like map in the external fiber of $\cE_1$ to a map in the external fiber of $\cE_2$ with the same (marked) hybrid class, and vice versa. The fact that these `straightening maps' between the two fibers are inverses of each other follows from the fact that a (marked) pinched polynomial-like map is uniquely determined by its (marked) hybrid class and external map.
\end{proof}

Our goal in the next few sections is to extract appropriate external maps from the Hecke group $\mathcal{H}_{d+1}$ and to justify that these external maps can indeed be mated with `parabolic' hybrid classes given by $R\in\pmb{\mathcal{B}}_d$. The resulting pinched polynomial-like maps, whose internal dynamics are given by maps in $\pmb{\mathcal{B}}_d$ and whose external dynamics exhibit features of the Hecke group, are important for the construction of the desired algebraic correspondences.

\section{Hecke group and the associated external class}\label{hecke_ext_map_sec}

In this section, we introduce the Hecke group and induce two piecewise real-analytic, expansive circle coverings; namely the \emph{Farey map} and the \emph{Hecke map}. These maps are produced by two different instantiations of a common underlying mechanism. In fact, these two maps will turn out to be piecewise analytically conjugate; i.e., they are two different representatives of the same external class (see Section~\ref{pinched_poly_like_subsec}).
These maps will play a crucial role in the two different constructions of correspondences as matings of rational maps and the Hecke group.

\subsection{The Hecke group}\label{hecke_group_subsec}

We recall the definition of the Hecke group from Section~\ref{main_thm_subsec}, and explain how it arises as an index two Fuchsian subgroup of a triangle reflection group. 

Let $\Pi$ be the closed ideal polygon in $\D$ with vertices at the $(d+1)$-st roots of unity. Label the sides of $\Pi$ as $C_1,\cdots, C_{d+1}$, where $C_j$ connects $\omega^{j-1}$ to $\omega^j$, with $\omega:=\exp{(\frac{2\pi i}{d+1})}$. For $j\in\{1,\cdots,d+1\}$, the connected component of $\D\setminus \Pi$ having $C_j$ on its boundary is denoted by $\D_j$. Further, let $a_1,a_2$ be the radial line segments from $0$ to $1$, and $0$ to $\omega$, respectively. Finally, let $\beta$ be the perpendicular bisector of the Euclidean arc $C_1$ (see Figure~\ref{external_model_fig}). The geodesics $C_1, a_1$ and $\beta$ form a triangle $\Delta$ in $\D$ with vertex angles $\pi/(d+1), \pi/2$ and $0$. 
The anti-M{\"o}bius reflection maps $r_{a_1}, r_{C_1}$ and $r_\beta$ in the sides of $\Delta$ generate a triangle reflection group $\Delta(d+1,2,\infty)$. The Hecke group $\mathcal{H}_{d+1}$ is the Fuchsian subgroup of index two of $\Delta(d+1,2,\infty)$. The group $\cH_{d+1}$ is generated by the M{\"o}bius maps $\sigma:=r_{\beta}\circ r_{C_1}$ and $\rho:=r_\beta\circ r_{a_1}$. We note that $\sigma$ is a rotation by angle $\pi$ around the point of intersection of $\beta$ and $C_1$, and $\rho$ is a rotation by angle $2\pi/(d+1)$ around the origin (which is the point of intersection of $\beta$ and $a_1$). In particular, $\sigma,\rho$ are of order $2,d+1$, respectively. It is a standard fact that there is no other relation in the group $\mathcal{H}_{d+1}$; i.e.,
$$
\mathcal{H}_{d+1}=\langle \sigma,\rho:\sigma^2=\rho^{d+1}=1\rangle\cong \Z/2\Z\ast\Z/(d+1)\Z.
$$
(cf. \cite[\S 11.3]{Bea95}).

For future reference we recall the notation $\alpha_j=\sigma\circ\rho^j$, $j\in\{1,\cdots, d\}$, and note that
\begin{enumerate}
\item $\alpha_j$ carries $\D_{d+2-j}$ onto $\D\setminus\overline{\D_1}$ for each $j$ (indeed, $\rho^j$ maps $\D_{d+2-j}$ onto $\D_1$, and $\sigma$ maps $\D_1$ onto $\D\setminus\overline{\D_1}$);
\item $\sigma$ conjugates $\alpha_j$ to $\alpha_{d+1-j}^{-1}$ for each $j$;
\item $\alpha_1$ and $\alpha_d$ are parabolic with unique fixed points $1$ and $\omega$ respectively; 
\item $\alpha_j$, $j\in\{2,\cdots, d-1\}$, is hyperbolic with its axis intersecting $C_{d+2-j}$ and $C_1$ (at their Euclidean mid-points) orthogonally. 
\end{enumerate}

\begin{figure}[h!]
\captionsetup{width=0.96\linewidth}
\begin{tikzpicture}
\node[anchor=south west,inner sep=0] at (0,0) {\includegraphics[width=0.45\textwidth]{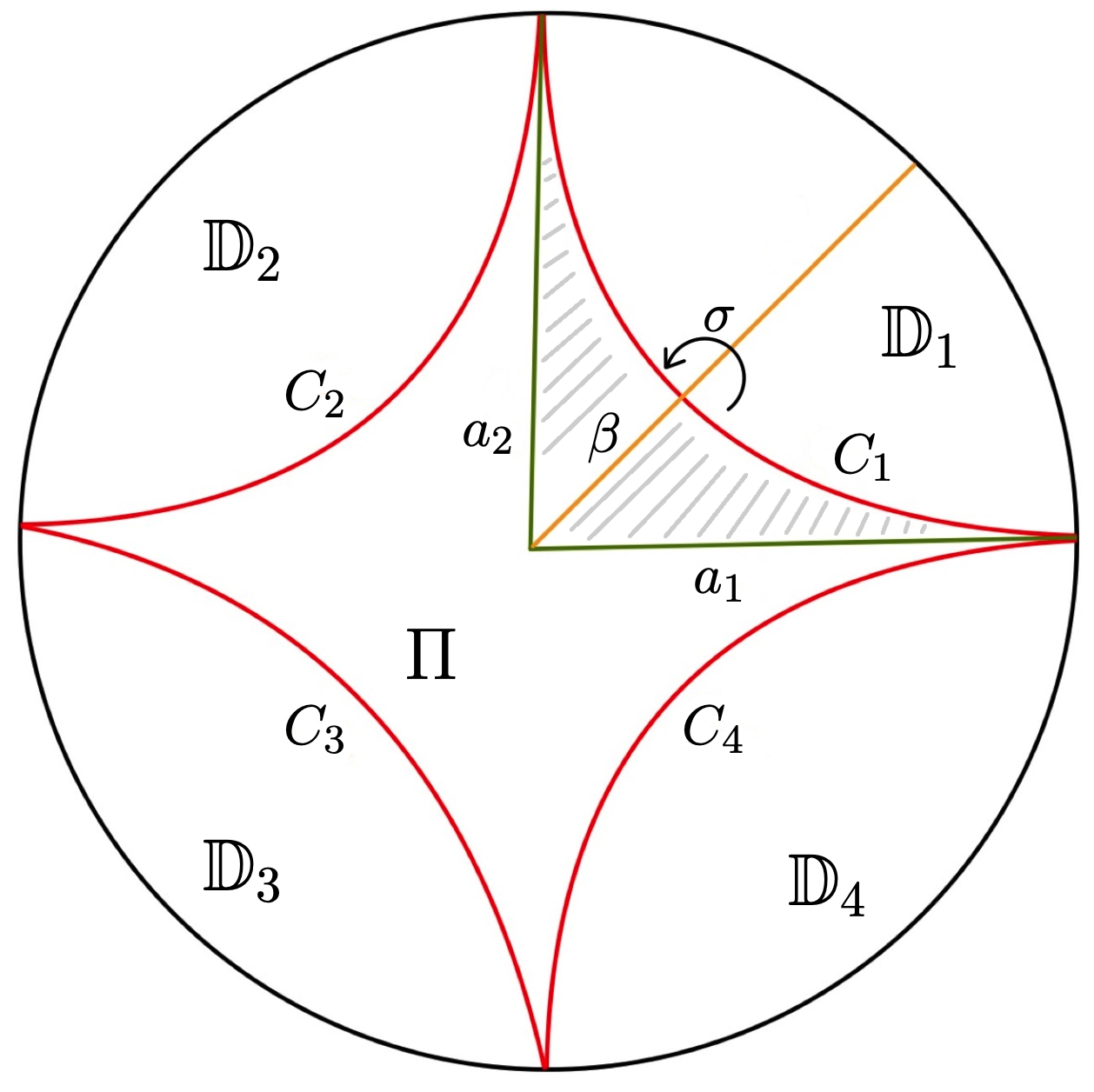}}; 
\node[anchor=south west,inner sep=0] at (6.4,0) {\includegraphics[width=0.5\textwidth]{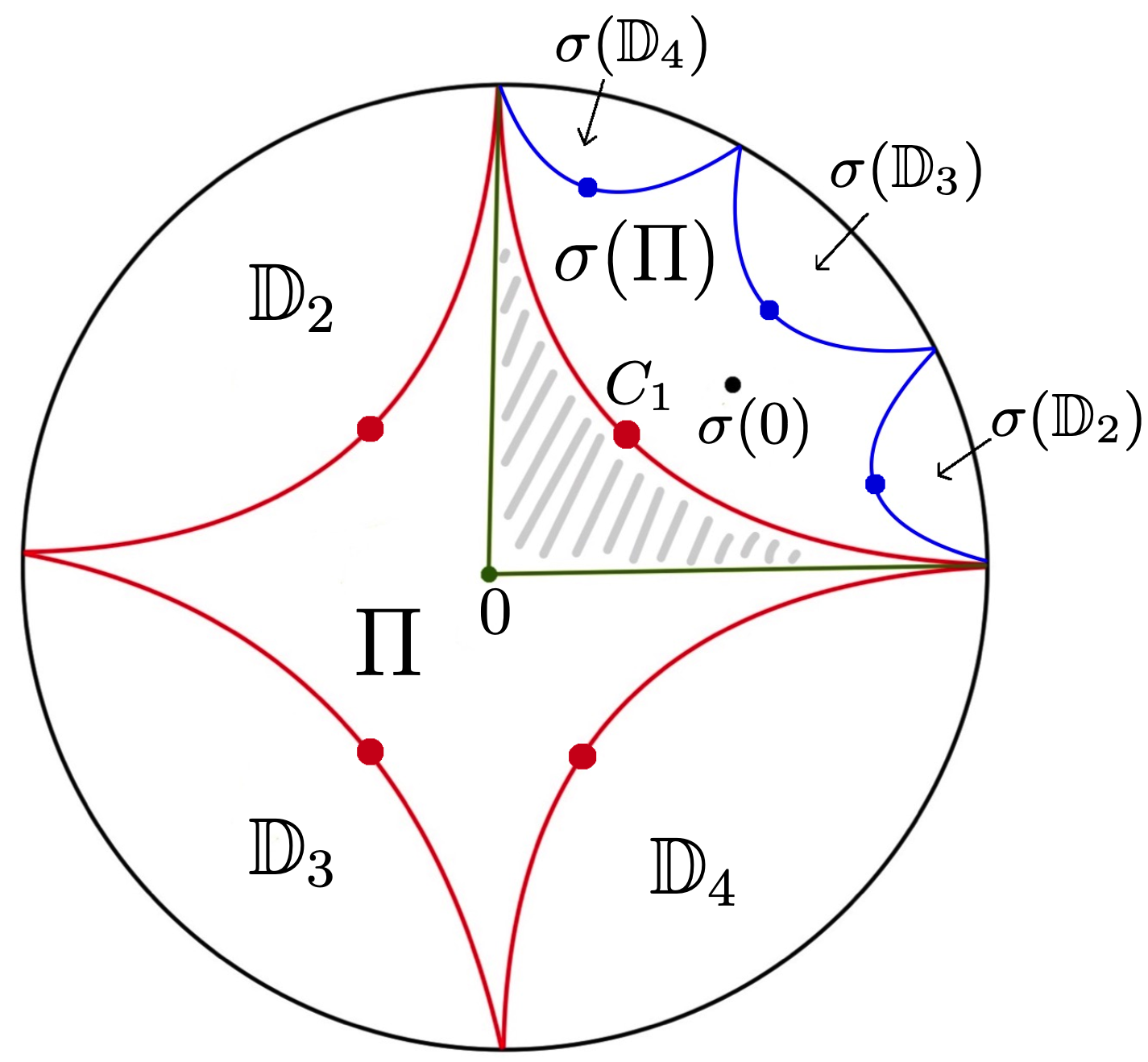}}; 
\end{tikzpicture}
\caption{The fundamental domain of the Hecke group $\mathcal{H}_4$ is shaded and the polygon $\Pi$ is depicted.}
\label{external_model_fig}
\end{figure}

A fundamental domain for the $\mathcal{H}_{d+1}$-action on $\D$ is given by the triangle bounded by $C_1,a_1$ and $a_2$. Moreover, $\D/\mathcal{H}_{d+1}$ is isomorphic to a sphere with one puncture, an orbifold point of order $2$ and an orbifold point of order $d+1$. Note that $\mathcal{H}_3$ is M{\"o}bius conjugate to the modular group $\mathrm{PSL}_2(\Z)$.

The axes of all the involutions (i.e., the invariant geodesics under the involutions) in $\cH_{d+1}$ produce a tessellation of $\D$ by ideal $(d+1)-$gons. Specifically, these polygons are the translates of $\Pi$ under the action of the Fuchsian group generated by $\rho^j\circ\sigma\circ\rho^{-j}$, $j\in\{0,\cdots,d\}$. Since the rotation $\rho$ acts transitively on the sides of $\Pi$, it follows that given any polygon $\Pi'$ of the above tessellation and given any two sides $\ell_1,\ell_2$ of $\Pi'$, there exists a unique group element in $\cH_{d+1}$ that carries $\ell_1$ to $\ell_2$ in an orientation-reversing fashion (here we put counter-clockwise orientation on the polygon boundary $\partial\Pi'$). 
This observation will play an important role in the definition of the Farey/Hecke maps.

\subsection{The Farey map}\label{farey_map_subsec}
In this subsection, we will construct a piecewise analytic circle covering, called the \emph{Farey map}, from the Hecke group.

\subsubsection{The discontinuous pre-Farey map $\widetilde{F}_d$}
We first define a piecewise M{\"o}bius map on an arc of $\mathbb{S}^1$.
The sides of $\sigma(\Pi)$ other than $C_1$ are mapped onto $C_1$ (as orientation-reversing maps) by the group elements 
$$
\beta_j:= \rho^j\circ\sigma,\ j\in\{1,\cdots, d\};
$$ 
indeed, $\beta_j$ maps $\sigma(C_{d+2-j})$ onto $\sigma(C_1)=C_1$.
Thus, the maps $\beta_j$ can be patched together to produce a piecewise M{\"o}bius self-map $\widetilde{F}_d$ of degree $d$ of the arc $[1,\omega]\subset\mathbb{S}^1$. The map $\widetilde{F}_d$ also extends naturally to a piecewise M{\"o}bius map
$$
\widetilde{F}_d:\bigcup_{j=2}^{d+1}\sigma(\overline{\D_j})\longrightarrow \overline{\D_1}.
$$
If we orient the boundary of the domain (respectively, the image) of $\widetilde{F}_d$ in such a way that the domain (respectively, the image) is on the right side of the boundary, then the map $\widetilde{F}_d$ is orientation-preserving. The map $\widetilde{F}_d$ has discontinuities at the break-points of its piecewise definition. To turn this (mildly) discontinuous piecewise M{\"o}bius map to actual circle coverings, we need to identify the points $1$ and $\omega$.

\subsubsection{Passing to a $(d+1)$-fold quotient to eliminate discontinuity}
To this end, we construct an orbifold quotient of the disc by the order $d+1$ rotation~$\rho$:
$$
\mathcal{Q}_1:=\faktor{\D}{\langle \rho\rangle}. 
$$ 
We also define the bordered orbifold $\overline{\mathcal{Q}_1}:=\faktor{\overline{\D}}{\langle \rho\rangle}$. The map $\widetilde{F}_d$ descends to a piecewise analytic covering map on the topological circle $\partial\mathcal{Q}_1$.
A (closed) fundamental domain $\mathfrak{F}_1$ for the action of $\langle \rho\rangle$ on $\overline{\D}$ is given by the closed sector (in $\overline{\D}$) formed by the radial lines at angles $0, \frac{2\pi}{d+1}$ (see top left of Figure~\ref{farey_fig})
Thus, $\overline{\mathcal{Q}_1}$ is biholomorphic to the (bordered) surface obtained from $\mathfrak{F}_1$ by identifying the radial line segments at angles $0, \frac{2\pi}{d+1}$ under $\rho$. This endows $\mathcal{Q}_1$ with a preferred choice of complex coordinates. 
We denote the quotient map from $\overline{\D}$ to $\overline{\mathcal{Q}_1}$ by $\pi_1$.
Note that the map $z\mapsto z^{d+1}$ of $\D$ induces a biholomorphism $\xi_1:\mathcal{Q}_1\to\D$, and yields a homeomorphism $\xi_1:\partial\mathcal{Q}_1\to\mathbb{S}^1$. We define the branched covering
$$
\theta_1:=\xi_1\circ\pi_1:\D\to\D,
$$
of degree $d+1$, and define the sets
$$
\mathfrak{H}_1:=\theta_1(\Pi),\quad \cD_1:=\overline{\D}\setminus\Int{\mathfrak{H}_1}. 
$$
The image of the radial lines at angles $0,\frac{2\pi}{d+1}$ under $\theta_1$ is the segment $[0,1)$.

\subsubsection{The Farey map $F_d$}
Let $\underline{\theta_1}^{-1}$ be the inverse branch of $\theta_1$ from $\overline{\D}\setminus[0,1]$ to $\mathfrak{F}_1$. 
With the above setup at our disposal, we now define the map
$$
F_d: \bigcup_{j=2}^{d+1} \theta_1(\sigma(\overline{\D_j}))\longrightarrow\overline{\D}
$$
as
$$
F_d\equiv \theta_1\circ \widetilde{F}_d\circ\underline{\theta_1}^{-1}= \theta_1\circ\beta_{d+2-j}\circ\underline{\theta_1}^{-1}\ \textrm{on}\  \theta_1(\sigma(\overline{\D'_j})).
$$
An important feature of the map $F_d$ is that it extends analytically to the larger region $\cD_1$ as 
$$
F_d:\cD_1\to\overline{\D},\quad F_d\equiv \theta_1\circ \sigma\circ\underline{\theta_1}^{-1}.
$$
This map $F_d$ is called the \emph{$d$-th Farey map}.

The map $F_d$ restricts to a piecewise analytic, orientation-preserving degree $d$ covering of $\mathbb{S}^1$ with a unique neutral fixed point (at $1$). It also enjoys the regularity property that the two branches of the map at each break-point of its piecewise definition admit local analytic extensions.
It is also easy to see that $F_d\vert_{\mathbb{S}^1}$ is expansive (for instance, by \cite[Lemma~3.7]{LMMN}). Hence, the map $F_d$ is an external map in the sense of Section~\ref{pinched_poly_like_subsec}.

We list some additional properties of the map $F_d$ (see Figure~\ref{farey_fig}).
\begin{enumerate}
\item All points in $\Int{\cD_1}$ eventually escape to $\mathfrak{H}_1$ under iterates of $F_d$.
\item The map $F_d$ has a critical point of multiplicity $d$ at $\theta_1(\sigma(0))$ with associated critical value $0$. 
\item The restriction $F_d:\partial\mathfrak{H}_1=\theta_1(C_1\cup\{1\})\to\partial\mathfrak{H}_1$ is an orientation-reversing involution.
\end{enumerate}

\begin{remark}
For an interpretation of the map $F_d$ as a factor of a certain \emph{Bowen-Series map}, see \cite{MM23}. 
\end{remark}

\subsubsection{Parabolic asymptotics of the Farey map}\label{para_asymp_subsec}

An explicit computation shows that the M{\"o}bius involution $\sigma$ is given by
$$
\sigma(z)= \frac{2z\omega -\omega(1+\omega)}{z(1+\omega)-2\omega},
$$
while the map $\rho$ is clearly given by $\rho(z)=\omega z$.
The Farey map $F_d$ does not admit an analytic continuation in a neighbourhood of the fixed point $1$. However, a straightforward computation using the maps $\sigma, \rho$ shows that the two branches of $F_d$ at $1$ can be extended analytically in a neighbourhood of $1$ as different parabolic germs. Specifically, these germs have the following power series expansion:
\begin{equation}
1+\zeta\mapsto 1+\zeta-ia\zeta^2+O(\zeta^3),\ \textrm{for}\ \zeta\approx 0\ \textrm{with}\ \im(\zeta)>0, 
\label{farey_asymp_top_eqn}
\end{equation}
and 
\begin{equation}
1+\zeta\mapsto 1+\zeta+ib\zeta^2+O(\zeta^3),\ \textrm{for}\ \zeta\approx 0\ \textrm{with}\ \im(\zeta)<0.
\label{farey_asymp_bottom_eqn}
\end{equation}
Here $a$ and $b$ are positive constants. In particular, $F_d$ has repelling directions along the imaginary axis at~$1$.

We remark, for future reference, that the change of coordinate $\zeta\mapsto -\frac{i}{a(\zeta-1)}$ (respectively, $\zeta\mapsto \frac{i}{b(\zeta-1)}$) carries the positive (respectively, negative) imaginary axis at $1$ to the negative real axis near $\infty$, and conjugates the asymptotics \eqref{farey_asymp_top_eqn} (respectively, \eqref{farey_asymp_bottom_eqn}) to maps of the form $z\mapsto z+1+O(1/z)$ near $\infty$.

\subsection{The Hecke map}\label{hecke_map_subsec}
We will now associate another piecewise analytic circle covering, called the \emph{Hecke map}, with the Hecke group.

\subsubsection{The discontinuous pre-Hecke map}
The sides of $\Pi$ other than $C_1$ are mapped onto $C_1$ by the group elements 
$$
\alpha_j= \sigma\circ \rho^j,\ j\in\{1,\cdots, d\};
$$ 
indeed, $\alpha_j$ maps $C_{d+2-j}$ onto $C_1$. 
Thus, the maps $\alpha_j$ can be patched together to produce a piecewise M{\"o}bius self-map $\widetilde{H}_d$ of degree $d$ of the arc $[\omega,1]\subset\mathbb{S}^1$. Note that $\widetilde{H}_d$ extends naturally to the region $\bigcup_{j=2}^{d+1}\overline{\D_j}$, and gives a piecewise M{\"o}bius map
$$
\widetilde{H}_d: \bigcup_{j=2}^{d+1}\overline{\D_j}\longrightarrow \overline{\D\setminus\D_1}.
$$
If we orient the boundary of the domain (respective, the image) of $\widetilde{H}_d$ in such a way that the domain (respectively, the image) is on the right side of the boundary, then the map $\widetilde{H}_d$ is orientation-preserving.
We remark that $\widetilde{H}_d$ is discontinuous at the break-points of its piecewise definition. We now pass to a quotient of $\D$, identifying the points $1$ and $\omega$ in the process, such that $\widetilde{F}_d$ descends to a piecewise analytic circle covering on the quotient.

\subsubsection{Passing to a two-fold quotient to eliminate discontinuity}

We define the (bordered) orbifolds
$$
\mathcal{Q}_2:=\faktor{\D}{\langle\sigma\rangle}, \quad \overline{\mathcal{Q}_2}:=\faktor{\overline{\D}}{\langle \sigma\rangle}.
$$
The map $\widetilde{H}_d$ descends to a piecewise analytic covering map on the topological circle $\partial\mathcal{Q}_2$.
To visualize the quotient orbifold $\mathcal{Q}_2$, and to derive asymptotics of the map induced by $\widetilde{H}_d$ on this orbifold, it will be convenient to work with a M{\"o}bius conjugate copy of $\cH_{d+1}$ such that the fixed point of the order two rotation $\sigma$ is placed at the origin. To this end, let $M$ be a M{\"o}bius automorphism of the disc that carries $C_1$ to the vertical geodesic connecting $\pm i$ and sends the fixed point of $\sigma$ on $C_1$ to the origin. After possibly  post-composing $M$ with $z\mapsto -z$, we can assume that $\Pi':=M(\Pi)$ is contained in the left half-disc (see Figure~\ref{farey_fig}). We will use the group $M\circ \cH_{d+1}\circ M^{-1}$ with generators $\rho^M:=M\circ\rho\circ M^{-1}, \sigma^M:= M\circ\sigma\circ M^{-1}$ to put complex coordinates on $\mathcal{Q}_2$. We also set $\D'_j:=M(\D_j)$ and $C'_j:=M(C_j)$, $j\in\{1,\cdots,d+1\}$.

A (closed) fundamental domain $\mathfrak{F}_2$ for the action of $\langle \sigma^M\rangle$ on $\overline{\D}$ is given by the closure (in $\overline{\D}$) of the left half-disc (see top right of Figure~\ref{farey_fig}).
Evidently, $\overline{\mathcal{Q}_2}$ is biholomorphic to the (bordered) surface obtained from $\mathfrak{F}_2$ by identifying the vertical geodesic with itself under $\sigma^M$. This endows $\mathcal{Q}_2$ with a preferred choice of complex coordinates. 
The quotient map from $\overline{\D}$ to $\overline{\mathcal{Q}_2}$ is denoted by $\pi_2$.
\begin{figure}[h!]
\captionsetup{width=0.96\linewidth}
\begin{tikzpicture}
\node[anchor=south west,inner sep=0] at (0,0) {\includegraphics[width=0.41\textwidth]{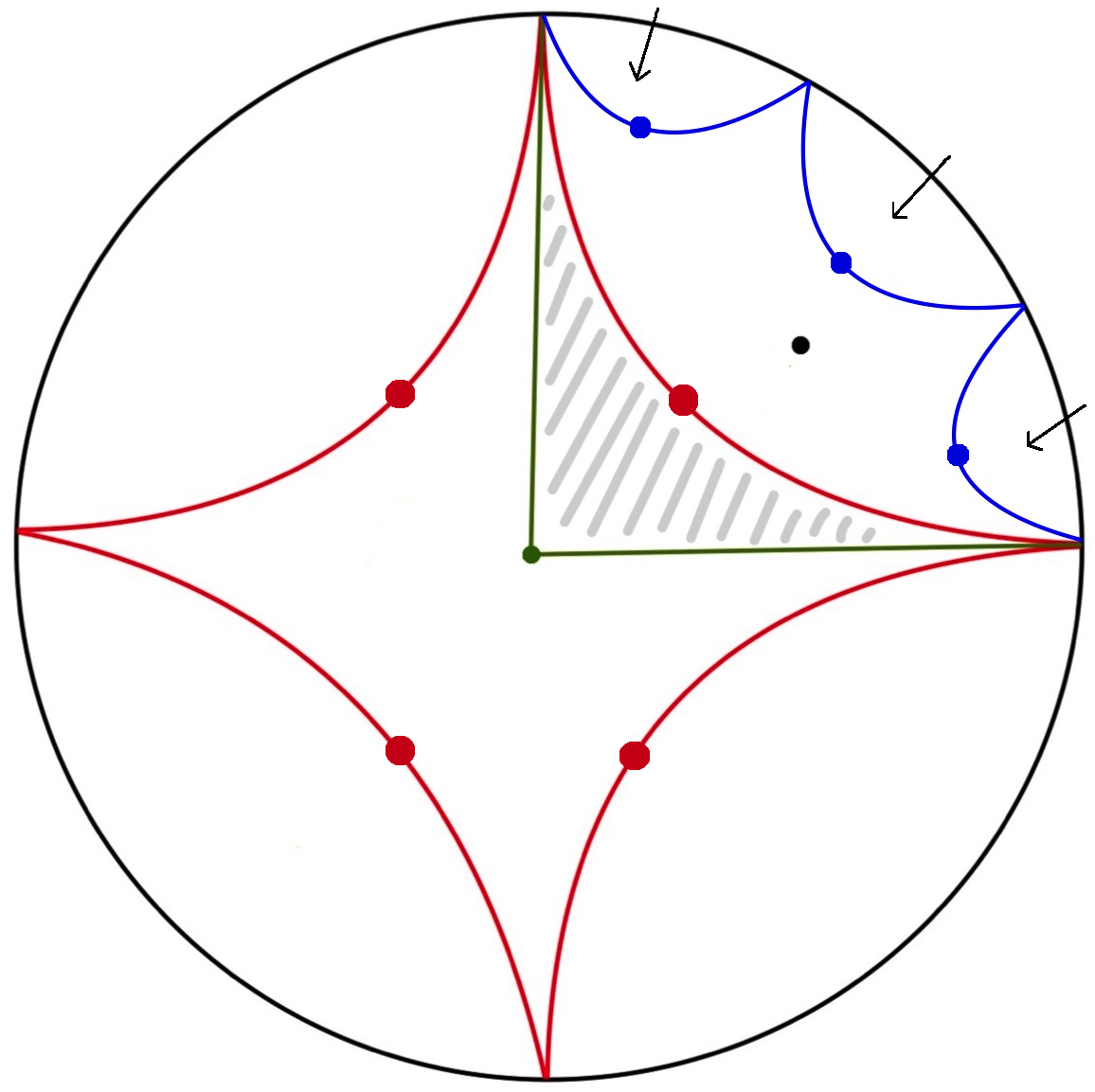}}; 
\node[anchor=south west,inner sep=0] at (6,0) {\includegraphics[width=0.41\textwidth]{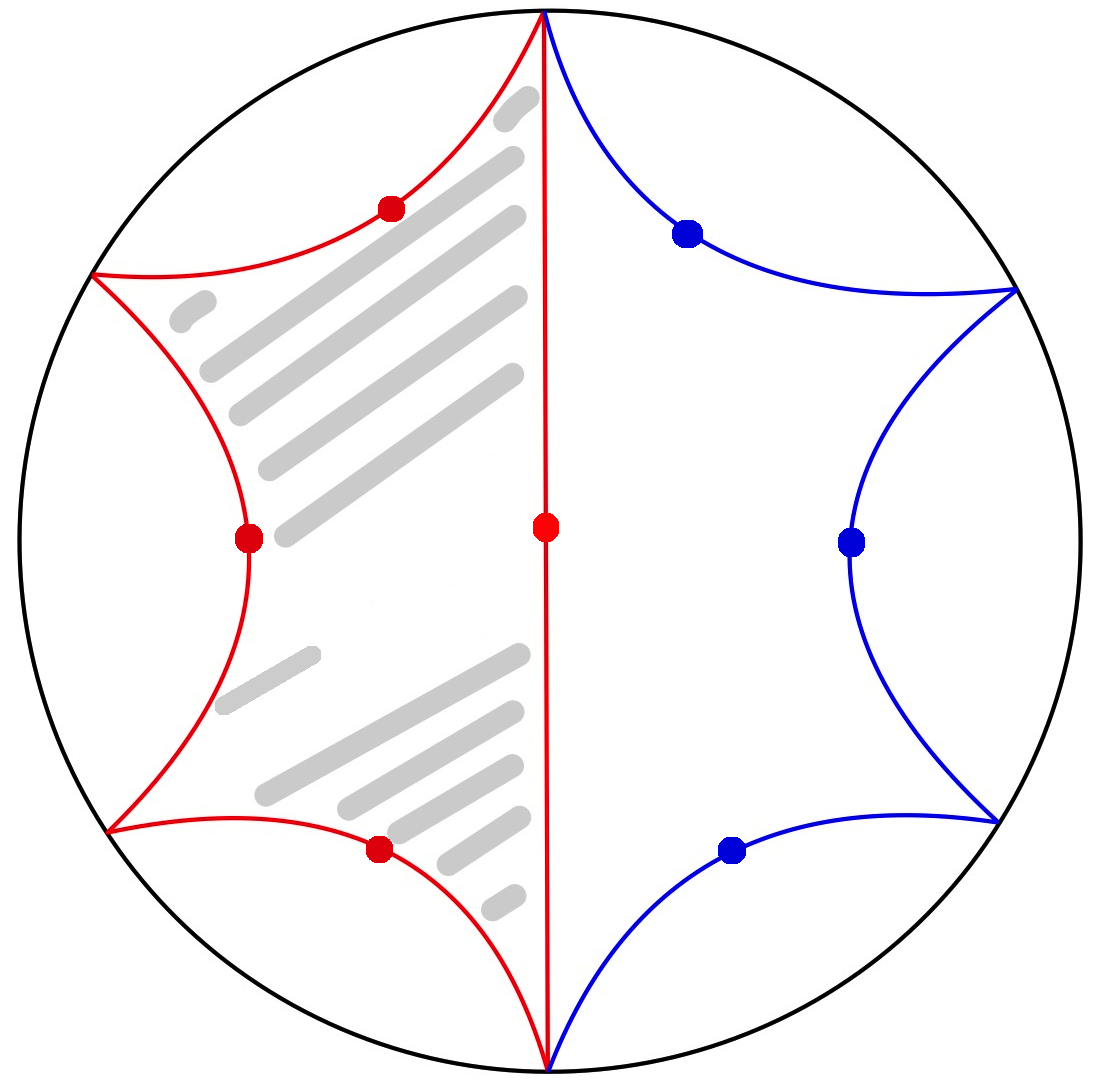}};
\node[anchor=south west,inner sep=0] at (0,-5.5) {\includegraphics[width=0.41\textwidth]{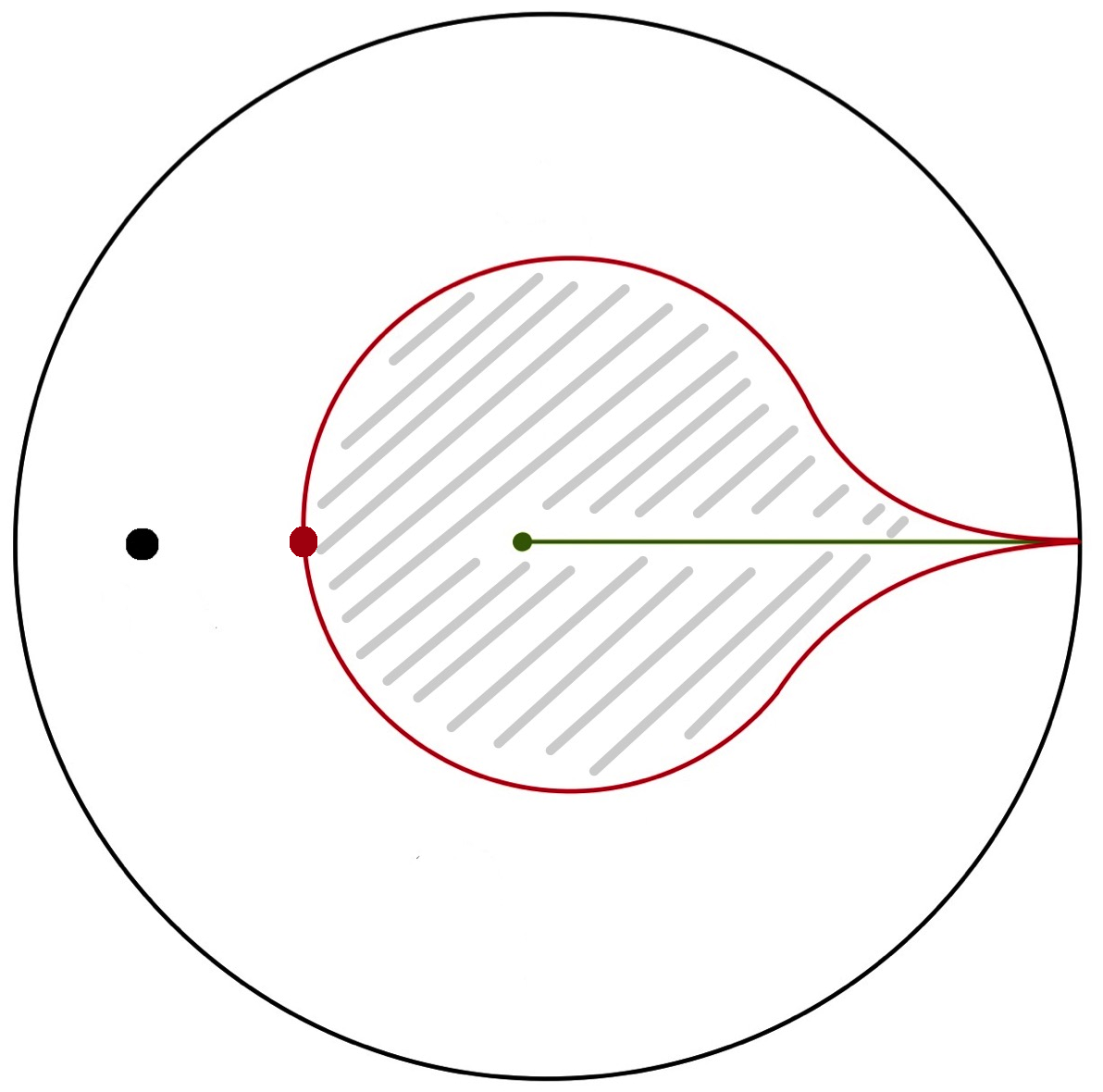}}; 
\node[anchor=south west,inner sep=0] at (6,-5.5) {\includegraphics[width=0.41\textwidth]{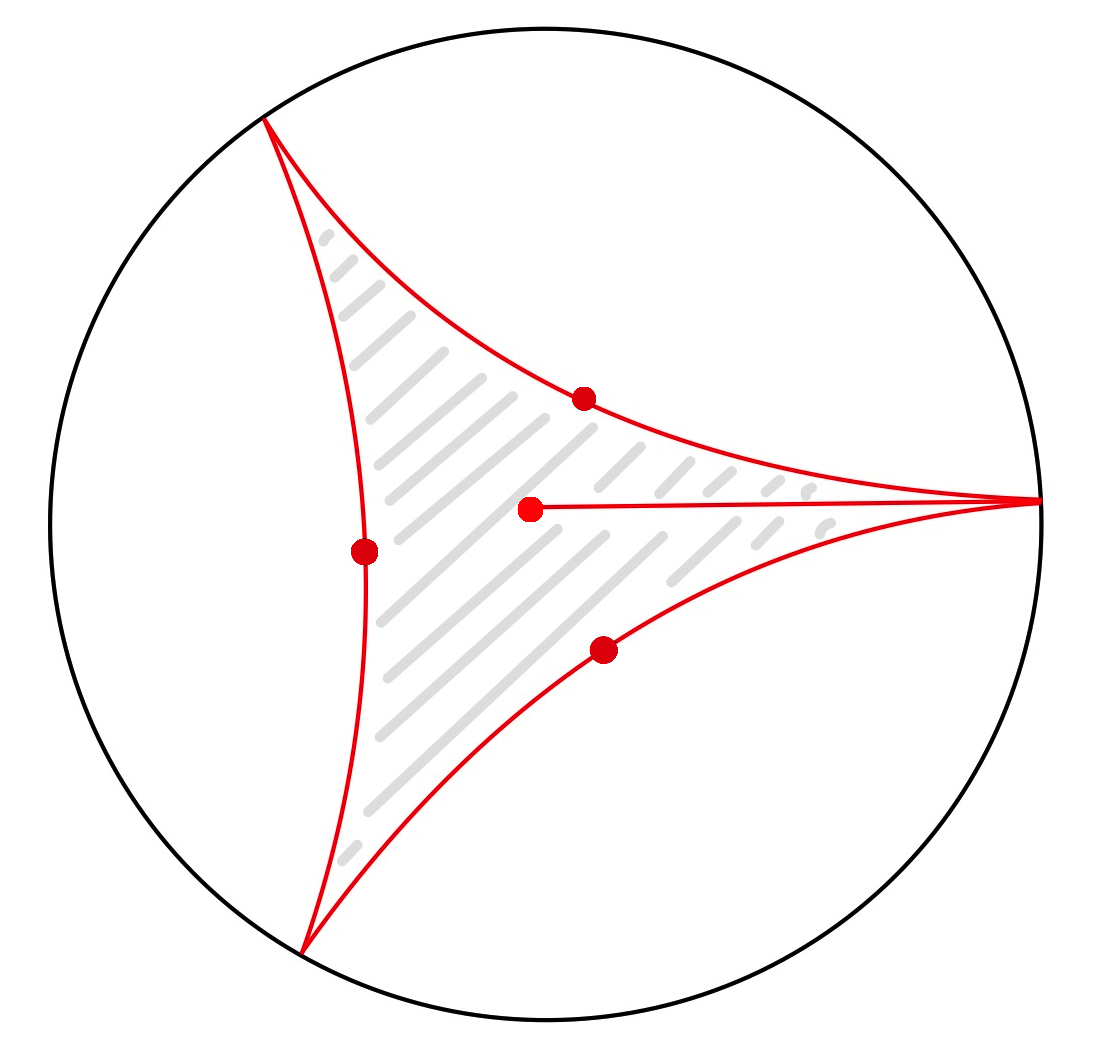}}; 
\node at (3.3,4.2) {\begin{large}$\sigma(\Pi)$\end{large}};
\node at (1.5,4) {\begin{large}$\D_2$\end{large}};
\node at (1.5,1.1) {\begin{large}$\D_3$\end{large}};
\node at (3.6,1) {\begin{large}$\D_4$\end{large}};
\node at (2.56,2.36) {\begin{small}$0$\end{small}};
\node at (2,2.2) {\begin{Large}$\Pi$\end{Large}};
\node at (3.92,3.32) {\begin{footnotesize}$\sigma(0)$\end{footnotesize}};
\node at (3.25,3.6) {\begin{footnotesize}$C_1$\end{footnotesize}};
\node at (3.28,5.32) {\begin{footnotesize}$\sigma(\D_4)$\end{footnotesize}};
\node at (4.72,4.66) {\begin{footnotesize}$\sigma(\D_3)$\end{footnotesize}};
\node at (5.56,3.36) {\begin{footnotesize}$\sigma(\D_2)$\end{footnotesize}};
\node at (9.28,2) {\begin{small}$\sigma^M(\Pi')$\end{small}};
\node at (7.5,4.36) {$\D_2'$};
\node at (6.6,2.62) {$\D_3'$};
\node at (7.5,0.72) {$\D_4'$};
\node at (8.8,2.56) {\begin{small}$0$\end{small}};
\node at (8.88,3.32) {\begin{small}$C'_1$\end{small}};
\node at (7.88,2.36) {\begin{small}$\Pi'$\end{small}};
\node at (1.25,-1.6) {\begin{tiny}$\theta_1(C_1)$\end{tiny}};
\node at (2.4,-2.45) {\begin{small}$\mathfrak{H}_1$\end{small}};
\node at (2.32,-4.9) {\begin{large}$\cD_1$\end{large}};
\node at (0.8,-3.3) {\begin{small}$\theta_1(\sigma(0))$\end{small}};
\draw [->,line width=0.5pt] (1.25,-1.75) to (1.6,-2.1);
\node at (2.4,-3.1) {\begin{small}$0$\end{small}};
\node at (8.48,-3.1) {\begin{small}$0$\end{small}};
\node at (8.12,-2.4) {$\mathfrak{H}_2$};
\node at (9,-1.36) {$\theta_2(\D'_2)$};
\node at (7,-3.2) {$\theta_2(\D'_3)$};
\node at (9.2,-4.6) {$\theta_2(\D'_4)$};
\node at (0.1,-0.2) {$\theta_1$};
\node at (11.1,-0.2) {$\theta_2$};
\draw [->, line width=0.6pt] (0.32,1.1) to (0.32,-1.28);
\draw [->, line width=0.6pt] (10.8,1.1) to (10.8,-1.44);
\draw [->, line width=0.6pt] (5.16,-2.5) to (6.12,-2.5);
\node at (5.64,-2.2) {\begin{large}$\mathfrak{p}$\end{large}};
\node at (5.66,2.12) {$M$};
\draw [->, line width=0.6pt] (5.16,1.8) to (6.1,1.8);
\node at (9.6,0.72) {\begin{tiny}$\sigma^M(\D'_2)$\end{tiny}};
\node at (9.6,4.36) {\begin{tiny}$\sigma^M(\D'_4)$\end{tiny}};
\node at (10.55,2.66) {\begin{tiny}$\sigma^M(\D'_3)$\end{tiny}};
\end{tikzpicture}
\caption{The red quadrilateral in the top left figure is mapped by the M{\"o}bius automorphism $M$ of the disc to the red quadrilateral in the top right figure, such that the geodesic $C_1$ is mapped to the vertical line segment. Left bottom: The map $F_d=\mathfrak{p}_2\circ\mathfrak{p}_1$ is defined on the closed set $\cD_1=\overline{\D}\setminus\Int{\mathfrak{H}_1}$ bounded by the unit circle (in black) and the monogon $\theta_1(C_1)$ (in red). Right bottom:  The map $H_d=\mathfrak{p}_1\circ\mathfrak{p}_2$ is defined on $\cD_2=\overline{\D}\setminus\Int{\mathfrak{H}_2}=\bigcup_{j=2}^4\theta_2(\overline{\D'_j})$.}
\label{farey_fig}
\end{figure}
The map $z\mapsto -z^2$ of $\D$ induces a biholomorphism $\xi_2:\mathcal{Q}_2\to\D$, and yields a homeomorphism $\xi_2:\partial\mathcal{Q}_2\to\mathbb{S}^1$. We define the quadratic branched covering
$$
\theta_2:=\xi_2\circ\pi_2:\D\to\D,
$$
and introduce the sets
$$
\mathfrak{H}_2:=\theta_2(\Pi'),\quad \cD_2:=\overline{\D}\setminus\Int{\mathfrak{H}_2}. 
$$
Note that the image of the vertical geodesic under $\theta_2$ is the segment $[0,1)$.

\subsubsection{The Hecke map $H_d$}
Let $\underline{\theta_2}^{-1}$ be the inverse branch of $\theta_2$ from $\overline{\D}\setminus[0,1]$ to $\mathfrak{F}_2$. 
Let
$$
H_d: \cD_2=\bigcup_{j=2}^{d+1} \theta_2(\overline{\D'_j})\longrightarrow\overline{\D}
$$
be defined as
$$
H_d\equiv \theta_2\circ M\circ \widetilde{H}_d\circ M^{-1}\circ\underline{\theta_2}^{-1}= \theta_2\circ\alpha_{d+2-j}^M\circ\underline{\theta_2}^{-1}\ \textrm{on}\  \theta_2(\overline{\D'_j}).
$$
The map $H_d:\cD_2\to\overline{\D}$ will be called the \emph{$d$-th Hecke map}.

The map $H_d$ restricts to a piecewise analytic, expansive, orientation-preserving degree $d$ covering of $\mathbb{S}^1$ with a unique neutral fixed point (at $1$). Further, the two branches of $H_d$ at each break-point of its piecewise definition admit local analytic extensions.
Consequently, the map $H_d$ is an external map in the sense of Section~\ref{pinched_poly_like_subsec}.

Some additional properties of $H_d$ are listed below (see Figure~\ref{farey_fig}).
\begin{enumerate}
\item All points in $\Int{\cD_2}$ eventually escape to $\mathfrak{H}_2$ under iterates of $H_d$.
\item The map $H_d$ maps each $\theta_2(C'_j)$ two-to-one onto the segment $[0,1)$, $j\in\{2,\cdots,d+1\}$.
\end{enumerate}

The arguments of Section~\ref{para_asymp_subsec} apply to the Hecke map $H_d$ as well, yielding similar parabolic asymptotics for $H_d$ at the neutral fixed point $1$. 
This allows one to construct for the map $H_d$ the so-called \textit{dividing arcs}: two smooth forward invariant arcs $\gamma_+^{\mathcal F}, \gamma_-^{\mathcal F}$ emerging from the parabolic fixed point tangentially to the unit circle (to construct these it is sufficient to take partial horocycles, see for example \cite{BL3} or alternatively preimages of straight lines under repelling Fatou coordinates, see for example \cite{L1} and \cite{LPS}).

\subsection{The Farey and Hecke maps define the same external class}\label{same_ext_class_subsec}

Clearly, the elements $\alpha_j$ and $\beta_j$ are conjugate by the involution $\sigma$, $j\in\{1,\cdots, d\}$. Thus, the maps $\widetilde{H}_d$ and $\widetilde{F}_d$ are also conjugate by $\sigma$.
Hence, the map
$$
\mathfrak{p}:=\theta_2\circ M\circ\underline{\theta_1}^{-1}:F_d^{-1}(\cD_1)=\bigcup_{j=2}^{d+1} \theta_1(\sigma(\overline{\D_j}))\longrightarrow\cD_2=\bigcup_{j=2}^{d+1} \theta_2(\overline{\D'_j})
$$
is a piecewise conformal conjugacy between $F_d$ and $H_d$. Moreover, it follows from the definition of $\mathfrak{p}$ that it is asymptotic to a power map near $1$, and admits conformal extensions near the other pinched points of its domain. It follows that $\mathfrak{p}$ can be extended quasiconformally to $\D$. In particular, $\mathfrak{p}$ is a quasisymmetric conjugacy between the external maps $F_d$ and $H_d$ on the~circle.
Therefore, $F_d$ and $H_d$ are piecewise analytically conjugate external maps in the sense of Definition~\ref{ppl_ext_conj_def}.

\section{Mating parabolic rational maps with Farey and Hecke maps}\label{conf_mating_sec}

The goal of this section is to construct pinched polynomial-like maps as matings of hybrid classes coming from rational maps $R\in\pmb{\mathcal{B}}_d$ with the Hecke and Farey maps. The construction of such pinched polynomial-like maps only depends on certain qualitative properties of the Farey and Hecke maps, and not on the specific maps themselves. Hence, we carry out these mating constructions in a unified setting. To this end, we will introduce a family of external maps that contain the Farey and Hecke maps, possibly up to restriction to a smaller domain.
These maps, which are termed \emph{Farey-like maps}, will be shown to admit matings with the above hybrid classes.

Recall that all rational maps in $\pmb{\mathcal{B}}_d$ have the parabolic Blaschke product $B_d$ as their external map. To construct the desired pinched polynomial-like maps, we need to show that Farey-like maps are quasiconformally conjugate to the external map $B_d$ in a pinched neighbourhood of the circle. 
Quasiconformal compatibility results of this flavor (between parabolic maps) was first established by the second author \cite{L1}, and similar techniques were extended to other settings in \cite{LLMM3,BL1,LMM23}.

\subsection{Quasiconformal compatibility of Farey-like maps and the parabolic Blaschke product $B_d$}\label{qc_mating_subsec}

\subsubsection{Farey-like maps}\label{farey_like_subsubsec}

 Recall that all external maps are assumed to satisfy the standing hypothesis of Section~\ref{pinched_poly_like_subsec}.

\begin{definition}\label{farey_like_def}
Let $\cE$ be an external map such that $\cE(1)=1$, and $\cE$ is analytic on $\mathbb{S}^1\setminus \cE^{-1}(1)$.
We say that $\cE$ is \emph{Farey-like} if it satisfies the following properties (see Figure~\ref{farey_like_fig} (left)).
\begin{enumerate}
\item\label{dom_range_cond} There exist one-sided, pinched, closed neighbourhoods $X_1, X_2$ of $\mathbb{S}^1$ in $\overline{\D}$ such that $X_2$ is pinched only at $1$, $X_1$ is pinched only at the $d$ points of $\cE^{-1}(1)$, $X_1\subset X_2$, and $\partial X_1\cap\partial X_2=\mathbb{S}^1$; 

\item\label{extension_cond} $\cE$ extends to a continuous map $\cE:X_1\to~X_2$ such that $\cE:\partial X_1\to\partial X_2$ is a $d-$fold covering and each component of $\Int{X_1}$ maps conformally onto $\Int{X_2}$; and

\item\label{para_cond} the branches of $\cE$ at $1$ extend locally as simple parabolic germs.
\end{enumerate}
\end{definition}
\begin{figure}[h!]
\captionsetup{width=0.96\linewidth}
\begin{tikzpicture}
\node[anchor=south west,inner sep=0] at (0,0) {\includegraphics[width=0.48\textwidth]{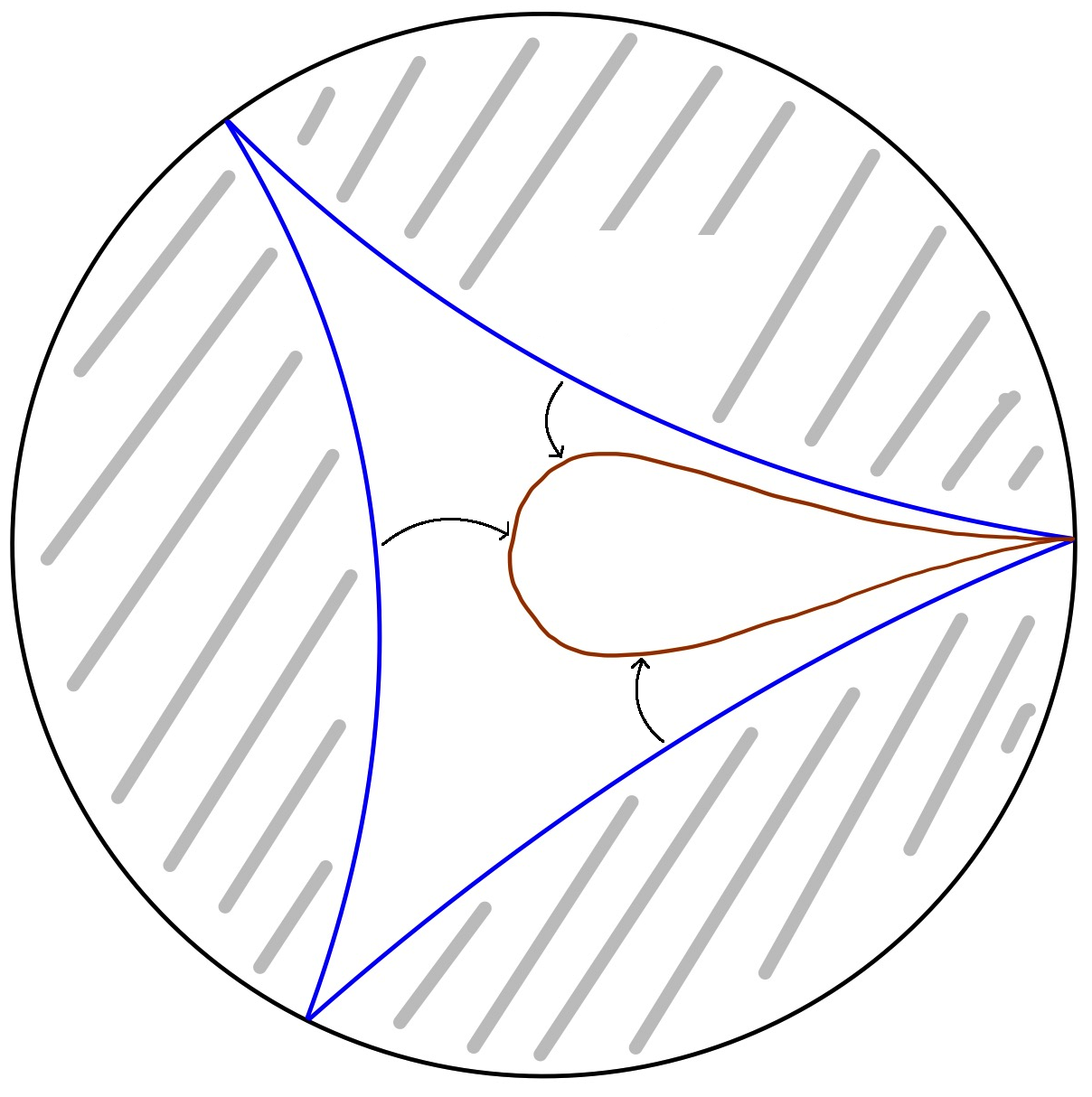}}; 
\node[anchor=south west,inner sep=0] at (6.4,0) {\includegraphics[width=0.48\textwidth]{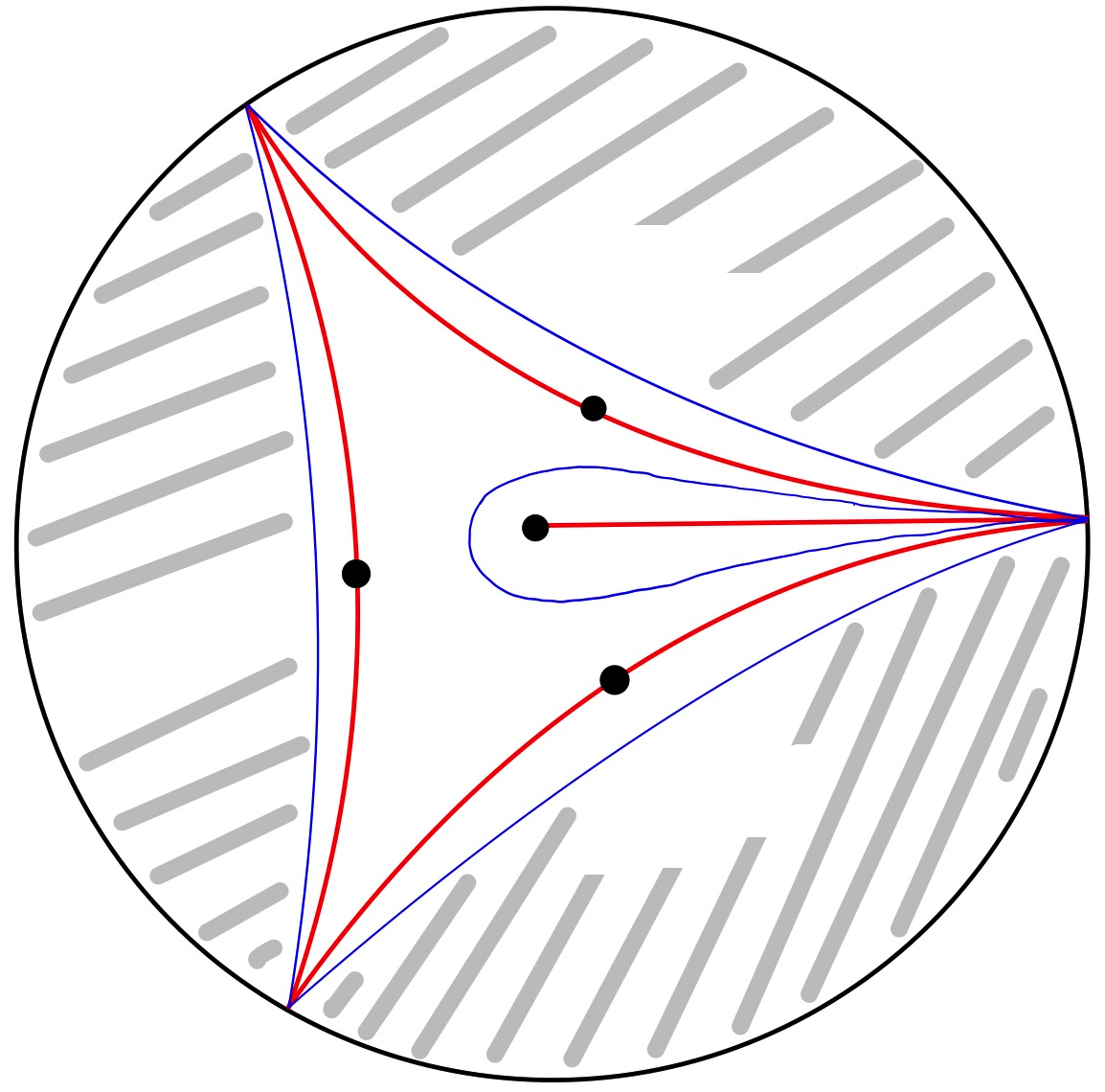}}; 
\node at (2.78,2.48) {$\gamma$};
\node at (4.8,0.4) {$\mathbb{S}^1$};
\node at (3.54,4.28) {\begin{footnotesize}$\cE^{-1}(\gamma)$\end{footnotesize}};
\node at (9.4,2.88) {\begin{footnotesize}$0$\end{footnotesize}};
\node at (8.84,2.88) {$\gamma$};
\node at (12.64,3.08) {$1$};
\node at (9.84,4.4) {\begin{footnotesize}$H_3^{-1}(\gamma)$\end{footnotesize}};
\node at (10.18,1.64) {\begin{footnotesize}$H_3^{-1}(\gamma)$\end{footnotesize}};
\node at (7.56,2.58) {\begin{footnotesize}$H_3^{-1}(\gamma)$\end{footnotesize}};
\end{tikzpicture}
\caption{Left: The domain and image of a Farey-like map is displayed. There is a simple closed curve $\gamma\subset\overline{\D}$ (in brown) that intersects $\mathbb{S}^1$ only at $1$ (which we call a \emph{monogon}) such that $X_2$ is the closed region bounded by the monogon $\gamma$ and the unit circle $\mathbb{S}^1$}. The closed region bounded by the blue curve $\cE^{-1}(\gamma)$ and $\mathbb{S}^1$ is $X_1$. Right: A Farey-like restriction of the Hecke map $H_3$ is illustrated. The curve $\gamma$, which meets $\mathbb{S}^1$ only at $1$, surrounds the arc $[0,1]$. The preimage of $\gamma$ under $H_3$, along with the circle $\mathbb{S}^1$, bounds the domain of definition of the Farey-like restriction of~$H_3$.
\label{farey_like_fig}
\end{figure}

\begin{remark}
i) The set $\gamma :=(\partial X_2\cap\D)\cup\{1\}$ is a monogon; i.e., a simple closed curve in $\overline{\D}$ that intersects $\mathbb{S}^1$ only at $1$. The set $X_2$ is bounded by $\gamma$ and~$\mathbb{S}^1$. The set $X_1$ is bounded by $\cE^{-1}(\gamma)$ and $\mathbb{S}^1$. 

ii) Since $\cE$ preserves $\mathbb{S}^1$ and is expansive on the circle, the parabolic germ obtained as the extension of the top (respectively, bottom) branch of $\cE$ at $1$ is tangent-to-identity and has the positive (respectively, negative) imaginary axis at $1$ as its repelling direction.

iii) After possibly shrinking the domain and the range, we may assume that $\partial X_1\cap\D$ and $\partial X_2\cap\D$ are smooth curves.
\end{remark}

By the Schwarz reflection principle, a Farey-like map $\cE:X_1\to X_2$ extends to a covering map $\cE:\widehat{X_1}:=X_1\cup\iota(X_1)\to\widehat{X_2}:=X_2\cup\iota(X_2)$ (where $\iota(w)=1/\overline{w}$) that maps each of the $d$ components $U_1,\cdots, U_d$ of $\Int{\widehat{X_1}}$ conformally onto $\Int{\widehat{X_2}}$. Here, we enumerate the components of $\Int{\widehat{X_1}}$ counter-clockwise such that $1\in\partial U_1$ and $U_1$ is contained in the upper half-plane.
Due to the touching structure of $\widehat{X_1}$ and $\widehat{X_2}$, the maps $\cE\vert_{U_j}$, $j\in\{2,\cdots,d-1\}$, are uniformly expanding, while $\cE\vert_{U_j}$, $j\in\{1,d\}$, are uniformly expanding away from $1$ with respect to the hyperbolic metric on $\Int{\widehat{X_2}}$.
This fact, along with the parabolic asymptotics of $\cE$ at $1$, implies that the diameters of the components of $\cE^{-n}(\Int{\widehat{X_1}})$ (which are topological discs) shrink to $0$ as $n\to+\infty$. It follows that the non-escaping set of $\cE:\widehat{X_1}\to\widehat{X_2}$ is precisely the unit circle $\mathbb{S}^1$, and hence every $z\in X_1\cap\D$ eventually maps to $X_2\setminus X_1$. In particular, $X_2\setminus X_1$ is a fundamental domain for $\cE$ (in $\D$), and $X_1\cap\D$ admits a tessellation by $(d+1)-$gons $\cE^{-n}(X_2\setminus X_1)$, $n\geq 1$.

\subsubsection{Quasiconformal conjugacy between Farey-like maps and the parabolic Blaschke product}

\begin{lemma}\label{qc_conj_fd_bd_lem}
Let $\cE$ be a Farey-like external map. Then there exists a homeomorphism $\mathfrak{g}:\overline{\D}\to\overline{\D}$ that is quasiconformal on $\D$, sends $1$ to $1$, and conjugates the restriction of $\cE$ to a one-sided pinched neighbourhood of $\mathbb{S}^1$ (pinched at the points of $\cE^{-1}(1)$)   to the restriction of $B_d$ to a one-sided pinched neighbourhood of $\mathbb{S}^1$ (pinched at the points of $B_d^{-1}(1)$).
\end{lemma}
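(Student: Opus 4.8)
The plan is to realize $\mathfrak g$ by a quasiconformal surgery: transport a conjugacy defined on a fundamental domain across the whole grand orbit, and handle the parabolic fixed point $1$ separately via Fatou coordinates. The first step is to observe that $B_d$ itself admits a Farey-like restriction carrying the same parabolic normal form at $1$ as in \eqref{farey_asymp_top_eqn}--\eqref{farey_asymp_bottom_eqn} (the positive constants there being irrelevant to a quasiconformal statement). Indeed $1$ is a simple parabolic fixed point of $B_d$ with a single attracting petal in $\D$; choosing such a petal $\mathcal P\subset\D$ small enough (so as to lie at positive distance from the critical value $\tfrac{d-1}{d+1}$ and off its own $B_d$-preimage inside $\D$), the sets $Y_2:=\overline{\D}\setminus\Int\mathcal P$ and $Y_1:=B_d^{-1}(Y_2)$ exhibit $B_d$ as a Farey-like map whose $d$ pinched points are the points of $B_d^{-1}(1)$. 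It therefore suffices to construct a quasiconformal conjugacy between two Farey-like maps $\cE:X_1\to X_2$ and $B_d:Y_1\to Y_2$ and then restrict it to a relative neighborhood of $\mathbb{S}^1\setminus\cE^{-1}(1)$.

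For the bulk of the surgery I would use the structure established in Subsection~\ref{farey_like_subsubsec}: $X_1\cap\D$ is tessellated by the closed tiles $\cE^{-n}(\overline{X_2\setminus X_1})$, $n\geq 0$, whose diameters shrink to $0$, and likewise for $B_d$. Fix a homeomorphism $\mathfrak g_0$ from the closed fundamental $(d+1)$-gon $\Phi:=\overline{X_2\setminus X_1}$ onto $\Phi':=\overline{Y_2\setminus Y_1}$ which matches the two side-pairing patterns induced by $\cE$ and $B_d$ (one checks the patterns agree, both being dictated by the single parabolic vertex at $1$ and the cyclic arrangement of the circle arcs), which is smooth off the pinched vertex, and which on the inner boundary is normalized by $\mathfrak g_0=B_d^{-1}\circ\mathfrak g_0\circ\cE$; then spread it by setting $\mathfrak g:=B_d^{-n}\circ\mathfrak g_0\circ\cE^n$ on $\cE^{-n}(\Int\Phi)$. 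These pieces glue by construction, their union is $X_1\cap\D$, and $\mathfrak g$ extends continuously to $\mathbb{S}^1$ because the tiles shrink; its boundary value is then automatically the topological conjugacy $h$ between $\cE|_{\mathbb{S}^1}$ and $B_d|_{\mathbb{S}^1}$ fixing $1$. Away from $1$ there are, modulo the conformal iterates $\cE^n$ and $B_d^n$, only finitely many tile configurations, so $\mathfrak g$ has bounded dilatation there.

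The main obstacle is to control the dilatation on the tiles clustering at $1$, where the map is not uniformly expanding; this is where the parabolic asymptotics must be exploited, following one of the two routes indicated earlier. On the upper side of a small pinched neighborhood of $1$, the coordinate change $\zeta\mapsto -i/(a(\zeta-1))$ of Subsection~\ref{para_asymp_subsec} turns the top branch of $\cE$, and the analogous change turns that of $B_d$, into maps of the form $z\mapsto z+1+O(1/z)$ on a right half-plane; the standard Fatou-coordinate construction then conjugates both further to the pure translation $z\mapsto z+1$, and symmetrically on the lower side via \eqref{farey_asymp_bottom_eqn}. In these linearizing coordinates the conjugacy can be chosen conformal (indeed affine), so the parabolic pieces contribute no dilatation, and what remains is to reconcile this conformal parabolic model with the bulk surgery along the repelling directions at $1$. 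I would do this using the forward-invariant dividing arcs $\gamma^{\mathcal F}_{\pm}$ of Subsection~\ref{para_asymp_subsec}, which partition a neighborhood of $1$ into sectors on which the conjugacy is assembled from (attracting or repelling) Fatou coordinates and glued along the arcs, with the conformal-geometric control of these sectors near $1$ supplied by Warschawski's boundary-behavior theorem for conformal maps of strip-like domains; alternatively one may bypass the arcs and invoke Warschawski directly, or use the invariant-arcs analysis of \cite{L1, LPS}. The end product is a homeomorphism $\mathfrak g:\overline{\D}\to\overline{\D}$, quasiconformal on $\D$, fixing $1$ and conjugating $\cE$ to $B_d$ on $X_1$, which restricted to a relative neighborhood of $\mathbb{S}^1\setminus\cE^{-1}(1)$ is the asserted map. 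Carrying out this last reconciliation with explicit dilatation bounds is the crux; everything else is routine surgery bookkeeping.
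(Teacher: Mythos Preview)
Your overall strategy matches the paper's: build a conjugacy on a fundamental annulus for the two Farey-like maps, propagate it by the conformal dynamics, and resolve the parabolic cusp at $1$ via Fatou coordinates together with either Warschawski's strip theorem or the dividing-arcs method of \cite{L1}. The paper presents exactly these two variants.

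There is, however, a concrete error in your setup for $B_d$. You ask that the attracting petal $\mathcal P$ lie \emph{at positive distance from} the critical value $\tfrac{d-1}{d+1}$. With that choice $B_d^{-1}(\mathcal P)$ consists of $d$ disjoint topological discs (none containing the critical point $0$), so $Y_1=\overline{\D}\setminus\Int B_d^{-1}(\mathcal P)$ is a single connected region on which $B_d$ is $d:1$ with an interior critical point; this is not Farey-like, and the $(d+1)$-gon tessellation you rely on never materializes. The correct choice (which the paper makes) is that $\mathcal P$ \emph{contain} the critical value, with the critical point $0$ on or outside $\partial\mathcal P$; then $B_d^{-1}(\mathcal P)$ is connected and touches $\mathbb{S}^1$ at all $d$ preimages of $1$, so $Y_1$ is genuinely pinched at $B_d^{-1}(1)$.

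One organizational point worth noting: the paper constructs the quasiconformal map on the single fundamental pinched annulus \emph{completely}, cusp at $1$ included (this is where Warschawski or the augmented-annulus dividing-arcs argument enters), and only then spreads by the conformal iterates, so bounded dilatation is automatic. You instead spread first and then propose to fix the dilatation near $1$ by gluing in a conformal Fatou model. This is workable, but the reconciliation you flag as ``the crux'' is cleaner the paper's way, since one avoids having to overwrite a map already defined on infinitely many tiles accumulating at~$1$.
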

\begin{figure}[h!]
\captionsetup{width=0.96\linewidth}
\begin{tikzpicture}
\node[anchor=south west,inner sep=0] at (0,0) {\includegraphics[width=0.66\textwidth]{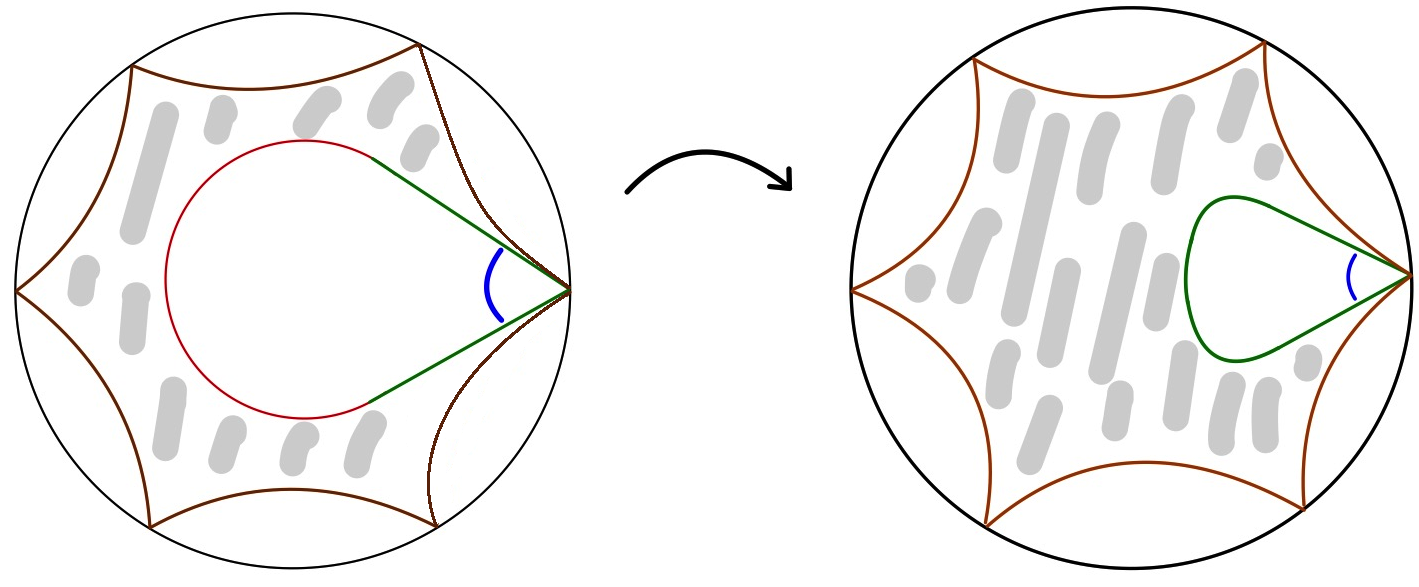}}; 
\node at (1.64,1.75) {$\mathfrak{T}$};
\node at (3.54,1.75) {\begin{small}$1$\end{small}};
\node at (8.48,1.84) {\begin{small}$1$\end{small}};
\node at (2.7,1.75) {\begin{small}$\theta_0$\end{small}};
\node at (7.32,1.75) {\begin{small}$\mathcal{P}$\end{small}};
\node at (4.16,2.8) {$\mathfrak{g}$};
\node at (7.78,1.82) {\begin{small}$\frac{\theta_0}{2}$\end{small}};
\end{tikzpicture}
\caption{Depicted is a degree $6$ Farey-like external map. The map $\mathfrak{g}$ carries $\mathfrak{T}$ onto the petal $\overline{\mathcal{P}}$ (of $B_6$) conformally and behaves like a square root map near $1$. The shaded regions are the fundamental pinched annuli $Y_{\cE}$ and $Y_{B_6}$.}
\label{rd_bd_qc_conj_fig}
\end{figure}

We present two proofs of the existence of the desired quasiconformal conjugacy for our class of maps. They apply the key ingredient, the Fatou coordinate at the parabolic point, in different ways: one proof is based on a classical theorem of Warschawski, the other is based on the dividing arcs method of \cite{L1}. The choice of which proof to consult is left to the reader's discretion.

\begin{proof}
A priori, the curve $\gamma:=(\partial X_2\cap\D)\cup\{1\}$ may have a cusp at $1$. Without loss of generality, we may open up this cusp to form a wedge of angle $\theta_0>0$ by replacing $\gamma$ (near $1$) with a pair of straight line segments. The fact that the branches of $\cE$ at $1$ extend locally as simple parabolic germs with repelling directions along the imaginary axis (at $1$) implies that if we redefine $X_1$ to be the closed region bounded by $\mathbb{S}^1$ and $\cE^{-1}(\gamma)$, then $\cE:X_1\to X_2$ is Farey-like. We also set $\mathfrak{T}:=\overline{\D\setminus X_2}$, so $\partial\mathfrak{T}=\gamma$ (see Figure~\ref{rd_bd_qc_conj_fig}).

In the $B_d-$plane, we choose an attracting petal $\mathcal{P}\subset\D$ of $B_d$ at the parabolic point $1$ such that 
\begin{enumerate}
\item $\mathcal{P}$ contains the critical value of $B_d$ in $\D$, 
\item the critical point $0$ lies on $\partial\mathcal{P}$ or outside $\overline{\mathcal{P}}$, and 
\item near $1$, the boundary $\partial\mathcal{P}$ is the union of two straight lines that meet at an angle $\theta_0/2$. 
\end{enumerate}
One can construct the homeomorphism $\mathfrak{g}:\overline{\D}\to\overline{\D}$ with the claimed properties in two ways.
\medskip

\noindent\textbf{First proof.}
Let us choose a homeomorphism $\mathfrak{g}: \mathfrak{T}\to \overline{\mathcal{P}}$ that is conformal on the interior and sends $1$ to $1$. By \cite[Theorem~3.11]{Pom}, the map $\mathfrak{g}$ has the asymptotics 
\begin{equation}
\zeta\mapsto 1+b(\zeta-1)^{1/2}+o((\zeta-1)^{1/2}),
\label{g_asymp_formula}
\end{equation} 
for some $b\in\C^*$, near $1$ (for a suitable branch of square root).
Note that both $\cE: \cE^{-1}(\gamma)\to \gamma$ and $B_d: B_d^{-1}(\partial\mathcal{P})\to\partial\mathcal{P}$ are degree $d$ orientation-preserving covering maps. We lift the map $\mathfrak{g}:\gamma\to \partial\mathcal{P}$ via the above coverings to get a homeomorphism from $\cE^{-1}(\gamma)$ onto $B_d^{-1}(\partial\mathcal{P})$, which we also denote by $\mathfrak{g}$. 

By design, the curves $\cE^{-1}(\gamma)$ and $\gamma$ bound a pinched annulus $Y_{\cE}$ that is a fundamental domain for the action of $\cE$ (in $\D$). Similarly, the curves $B_d^{-1}(\partial\mathcal{P})$ and $\partial\mathcal{P}$ bound a pinched annulus $Y_{B_d}$ that is a fundamental domain for the action of $B_d$. These fundamental pinched annuli are shaded in grey in Figure~\ref{rd_bd_qc_conj_fig}.

We claim that there exists a quasiconformal homeomorphism $\mathfrak{g}$ from the pinched fundamental annulus $Y_{\cE}$ onto the pinched fundamental annulus $Y_{B_d}$ which continuously agrees with $\mathfrak{g}$ already defined.
Once this claim is established, we can use the equivariance property of $\mathfrak{g}$ on the boundaries of the pinched annuli (more precisely, the fact that $\mathfrak{g}$ conjugates the action of $\cE$ to that of $B_d$ on the boundaries of their fundamental pinched annuli) to lift it under the iterates of $\cE$ and $B_d$, and obtain a quasiconformal homeomorphism of $\D$ with a quasisymmetric extension to $\mathbb{S}^1$. By construction, this map $\mathfrak{g}$ would conjugate the restriction of $\cE$ to a one-sided pinched neighbourhood of $\mathbb{S}^1$ (pinched at the points of $\cE^{-1}(1)$) to the restriction of $B_d$ to a one-sided pinched neighbourhood of $\mathbb{S}^1$ (pinched at the points of $B_d^{-1}(1)$).

We now proceed to prove the quasiconformal interpolation claim. Note that the boundaries of $Y_{\cE}, Y_{B_d}$ are piecewise smooth. Moreover, since the branches of $\cE$ at the break-points of its domain of definition admit local analytic extensions, it follows that the smooth pieces of the outer boundary curves of $Y_{\cE}, Y_{B_d}$ meet at positive angles except at the point $1$. Quasiconformality of $\mathfrak{g}: \mathfrak{T}\to \overline{\mathcal{P}}$ at $1$ now translates to the fact that $\mathfrak{g}:\partial Y_{\cE}\to\partial Y_{B_d}$ is quasisymmetric away from $1$. Thus, the existence of the desired quasiconformal interpolating map outside a neighbourhood of $1$ follows by the Ahlfors-Beurling extension theorem. It remains to justify that a similar quasiconformal interpolation can also be performed near $1$, where the boundaries of the pinched fundamental annuli subtend zero angles (i.e., they form cusps). By symmetry of the situation, it suffices to demonstrate this for the top cusps. To facilitate the construction of this interpolating map, we will now perform changes of coordinates that carry the point $1$ (both in the $\cE-$plane and in the $B_d-$plane) to the point at $\infty$.

To this end, we use the change of coordinate $\pmb{j}_1:\zeta\mapsto -\frac{i}{a(\zeta-1)}$ (see Section~\ref{para_asymp_subsec}) to send $1$ to $\infty$ and the positive imaginary axis at $1$ to the negative real axis near $\infty$. The map $\pmb{j}_1$ conjugates the Farey-like map $\cE$ on $\{\zeta\in\C:\vert\zeta-1\vert<\epsilon,\ \im(\zeta)>0\}$ (for $\epsilon>0$ small enough) to a map of the form $z\mapsto z+1+O(1/z)$ near $\infty$, and sends the top cusp of $Y_{\cE}$ at $1$ to an unbounded curvilinear strip $S_1$ of width $1$ bounded by a pair of smooth curves. 
Analogously, we use the map $\pmb{j}_2:\zeta\mapsto \frac{c}{(\zeta-1)^2}$ (for some $c\in\C^*$) to send $1$ to $\infty$ and the negative real axis at $1$ to the positive real axis near $\infty$. The map $\pmb{j}_2$ conjugates the Blaschke product $B_d$ on $\{\zeta\in\C:\vert\zeta-1\vert<\epsilon,\ \arg(\zeta)\in\{\pi/2+\delta,3\pi/2-\delta\}\}$ (for $\epsilon,\delta>0$ sufficiently small) to a map of the form $z\mapsto z+1+O(1/z)$ near $\infty$, and sends the top cusp of $Y_{B_d}$ at $1$ to an unbounded curvilinear strip $S_2$ of width $1$ bounded by a pair of smooth curves. Thanks to the asymptotic development~\eqref{g_asymp_formula}, the map $\pmb{j}_2\circ\mathfrak{g}\circ\pmb{j}_1^{-1}$ is easily seen to be asymptotically linear near $\infty$.

Next, we map the curvilinear infinite strips $S_1,S_2$ to the straight infinite strip $\mathscr{S}:=\{z=x+iy\in\C: x>0,\ y\in\left(-\pi/2,\pi/2\right)\}$ by conformal maps $\pmb{k}_1,\pmb{k}_2$. By \cite{War42}, the map $\pmb{k}_i$, $i\in\{1,2\}$ has asymptotics $\pmb{k}_i(z) = c_i z + o(z)$ as $z\to\infty$, for some $c_i\in\C^*$. A straightforward computation (as in \cite[Lemma~5.3]{LLMM3}) shows that the map 
$$
\pmb{k}_2\circ\pmb{j}_2\circ\mathfrak{g}\circ\pmb{j}_1^{-1}\circ\pmb{k}_1^{-1}:\partial\mathscr{S}\to\partial\mathscr{S}
$$ 
is of the form $\lambda z+o(z)$ as $\re(z)\to+\infty$, and the maps on the upper and lower boundaries are a bounded distance from each other. Therefore, linear interpolation yields a quasiconformal homeomorphism of $\mathscr{S}$ that continuously extends the above boundary maps (once again, we refer the reader to \cite[Lemma~5.3]{LLMM3} for an explicit formula for such an interpolating quasiconformal homeomorphism). Pulling it back to the top cusps of $Y_{\cE}, Y_{B_d}$ via the conformal maps $\pmb{j}_i, \pmb{k}_i$, $i\in\{1,2\}$, produces the desired quasiconformal map $\mathfrak{g}:Y_{\cE}\to Y_{B_d}$.
\medskip

\noindent\textbf{Second proof.}
This proof uses the method of \textit{dividing arcs}, introduced by the second author in \cite{L1}. We use these to augment the pinched annuli defined above by adding topological discs in such a way that there are no longer cusps at the pinch points: the existence of the desired quasiconformal homeomorphism $\mathfrak{g}$ then follows from standard theory of extendability of 
quasisymmetric maps on piecewise smooth boundaries (without cusps) to quasiconformal homeomorphisms on interiors. 

\begin{figure}
\captionsetup{width=0.96\linewidth}
\centering
\includegraphics[width=5cm]{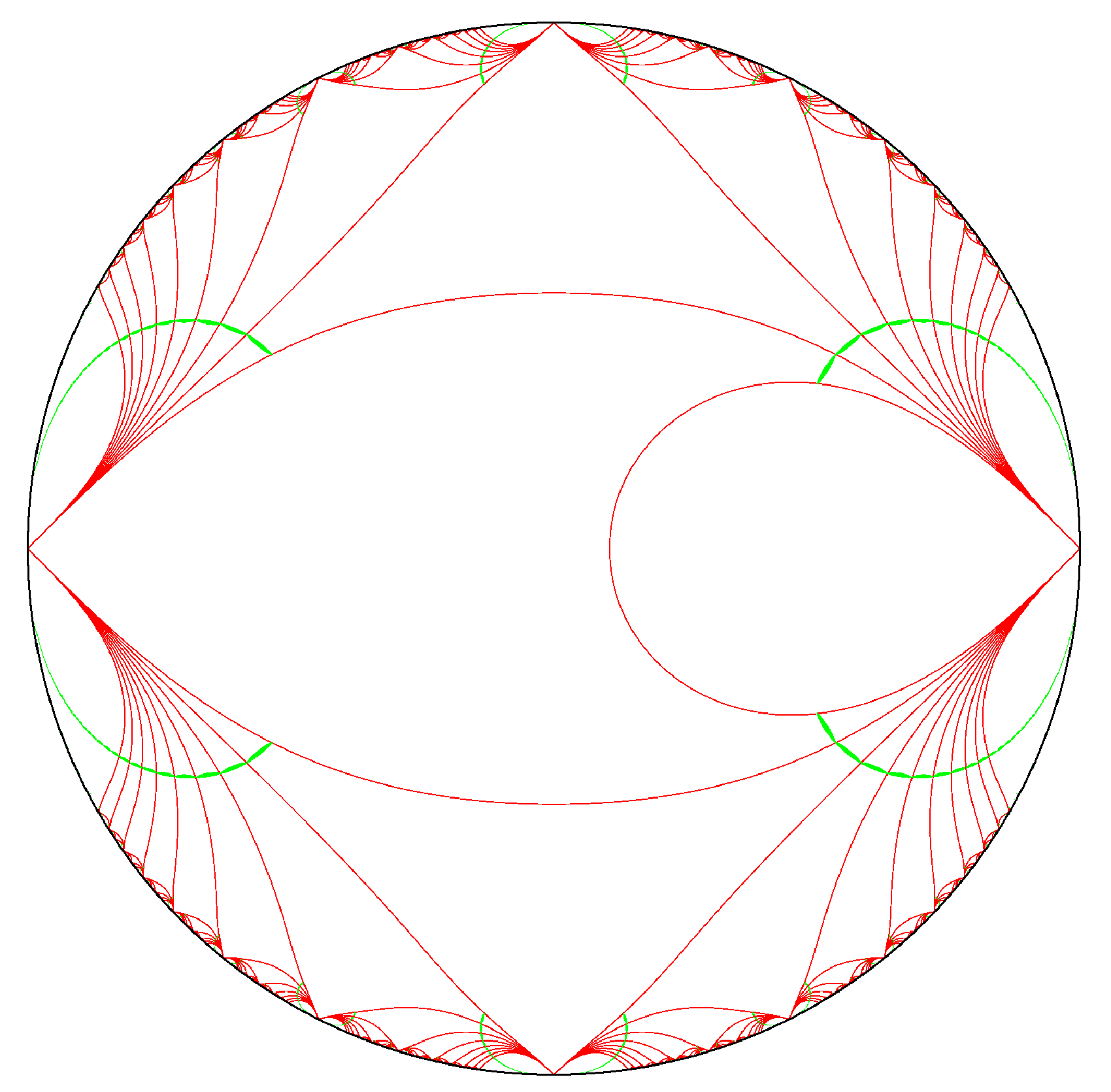}
\caption{Invariant arcs for $B_2$, and their inverse images}
\label{arcsfig}
\end{figure}

Our dividing arcs for the map  $B_d$ are a pair of smooth forward invariant arcs $\gamma_B^+, \gamma_B^-$, parametrized by $[0,1]$, lying in the repelling petals of the parabolic fixed point $1\in \D$,
emerging from it  tangentially to the unit circle, and meeting the boundary $\partial \mathcal{P}$ of our chosen attracting petal $\mathcal{P}$ transversely (Figure \ref{arcsfig}): we can construct 
$\gamma_B^+$ and $\gamma_B^-$ as pre-images of repelling horizontal straight lines in the Fatou coordinate, as in \cite{L1}.  Similarly we may construct
dividing arcs for the Farey-like map $\cE$ at its parabolic point, meeting the boundary curve of $X_2$ transversely. (We remark that for the Farey-like maps that will concern us, the Hecke 
map and the Farey map, the existence of dividing arcs on both sides of the parabolic point is self-evident, since the elements $\alpha_1$ and 
$\alpha_d$ of the Hecke group are parabolic, and so have horocycles emanating from their fixed points.)

Let $d_B^+$ denote the topological disc trapped between $\gamma_B^+$ and $\partial \mathcal{P}$, and $d_B^-$ that trapped by $\gamma_B^-$ and $\partial \mathcal{P}$ (see Figure \ref{arcsfig}). 
Define the augmented pinched annulus $U$ by setting $$U:= Y_{B_d}\cup d_B^+ \cup d_B^-.$$ 
We next augment the pinched annulus $Y_\cE=X_2 \setminus X_1$ by adding suitable part-horodiscs to it.
Let $d_\cE^+$ denote the subset of $\D$ trapped between $\gamma$ and $\gamma_\cE^+$,
and let $d_\cE^-$ be the subset trapped between $\gamma$ and $\gamma_\cE^-$. 
We define the augmented pinched annulus $V$ by setting $$V: =Y_\cE \cup d_\cE^+ \cup d_\cE^-.$$

Via Fatou coordinates on both sides we can construct analytic conjugacies $\psi^+: \gamma_\cE^+ \rightarrow \gamma_B^+$
and $\psi^-: \gamma_\cE^- \rightarrow \gamma_B^-$, which 
extend to quasiconformal maps $\psi_d^+: d_\cE^+ \rightarrow d_B^+$ and 
$\psi_d^-: d_\cE^- \rightarrow d_B^-$
respectively.

Let $h: \gamma \rightarrow \partial \mathcal{P}$ be a $C^1$ diffeomorphism with $h(\gamma_\cE^+)=\psi^+(\gamma_\cE^+),\,\,\,h(\gamma_\cE^-)=\psi^-(\gamma_\cE^-)$, and 
let $h_1: \cE^{-1}(\gamma) \rightarrow B_d^{-1}(\partial\mathcal{P})$ be a lift.
Note that $h: \gamma \rightarrow \partial \mathcal{P}$ extends to a quasiconformal map $H: \mathfrak{T} \rightarrow \mathcal{P}$, where $\partial \mathfrak{T}=\gamma$.
There also exists a quasiconformal homeomorphism  
$$\psi_2: \overline {V} \rightarrow \overline {U}$$
such that $\psi_{2|d_\cE^{\pm}}=\psi_d^{\pm}$, $\psi_{2|\gamma}=h$, and $\psi_{2|\cE^{-1}(\gamma)}=h_1$.

Where an inverse image of the augmented annulus $U$ overlaps with an earlier inverse image, their intersection is always a subset of some 
$d^\pm_\cE$ (or an inverse image thereof), and we simply remove this overlap by cutting along the relevant part of invariant curve $\gamma^\pm_\cE$ (or inverse image) to
obtain a partition of $\D\setminus X_2$, and a similar partition of  $\D\setminus \mathcal P$. The dynamics matches on the partition boundaries
which are segments of the invariant curves
(since $\psi^+$ and $\psi^-$ are both analytic conjugacies), so we can lift the quasiconformal map $\psi_2$ via the dynamics of $\cE$ and the corresponding dynamics of $B_d$, 
to obtain a quasiconformal map 
$\psi_3:\D\setminus (V\cup \mathfrak{T})\rightarrow  \D\setminus (U\cup \mathcal{P})$ 
and finally define
$\mathfrak{g}:\D\to\D$
by:
$$ \mathfrak{g} =\left\{
\begin{array}{cl}
H  &\mbox{on  } \mathfrak{T} \\
\psi_2 &\mbox{on  } V \\
\psi_3 &\mbox{on  } \D\setminus (V\cup \mathfrak{T}) \\
\end{array}\right.
$$
As $\mathfrak{g}$ is quasiconformal, it extends to a quasiconformal map  $\mathfrak{g}:\overline{\D}\to\overline{\D}$.
\end{proof}

\subsection{Mating parabolic rational maps with Farey-like maps}\label{mating_construct_subsec}

Throughout the rest of this section, we will fix an $R\in\pmb{\mathcal{B}}_d$.

\begin{lemma}\label{conf_mating_lem}
Let $\cE:X_1\to X_2$ be a Farey-like external map.
Then, there exists a pinched polynomial-like map $(f,P_1,P_2)$ that is hybrid equivalent to $R$ and has $\cE$ as its external map.
\end{lemma}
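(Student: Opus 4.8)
The plan is to glue the internal dynamics of $R$ to the external dynamics of $\cE$ across the common circle, using the quasiconformal conjugacy produced by Lemma~\ref{qc_conj_fd_bd_lem} together with the Measurable Riemann Mapping Theorem. Recall that $R\in\pmb{\mathcal{B}}_d$ admits a pinched polynomial-like restriction $(R|_{R^{-1}(P_2^R)},P_1^R,P_2^R)$ with $P_2^R=\widehat{\C}\setminus D$ for an attracting petal $D\subsetneq\A(R)$ containing the fully ramified critical value, and that $R|_{\A(R)}$ is conformally conjugate to $B_d|_{\D}$ via the Riemann uniformization $\psi_R\colon\D\to\A(R)$. On the other side, $\cE$ is a Farey-like map, so by Lemma~\ref{qc_conj_fd_bd_lem} there is a homeomorphism $\mathfrak g\colon\overline{\D}\to\overline{\D}$, quasiconformal on $\D$, fixing $1$, conjugating $\cE$ (on a one-sided neighborhood of $\mathbb{S}^1$ minus $\cE^{-1}(1)$) to $B_d$ (on a one-sided neighborhood of $\mathbb{S}^1$ minus $B_d^{-1}(1)$).

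The construction proceeds as follows. First I would choose the petal $D=\psi_R(\mathcal P)$, where $\mathcal P\subset\D$ is the attracting petal of $B_d$ used in Lemma~\ref{qc_conj_fd_bd_lem}, so that $\partial P_2^R$ has a wedge of angle $\theta_0/2$ at the parabolic point matching the normalization in that lemma. Then $\psi_R$ carries the exterior picture of $B_d$ (namely $\overline{\D}\setminus\mathcal P$, with its tessellation by pieces $B_d^{-n}(Y_{B_d})$) to $\A(R)\setminus D$, which is a pinched neighborhood of $\partial\A(R)=\partial K(R)$. Composing, the map $\psi_R\circ\mathfrak g\colon X_2\setminus\mathfrak T\to\A(R)\setminus D$ carries the dynamics of $\cE$ to the dynamics of $R$ near $K(R)$, conformally near the circle. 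Now form the topological sphere $\widehat{\C}=\Sigma_1\sqcup_{\mathbb{S}^1}\Sigma_2$ by gluing a copy $\Sigma_1$ of $\overline{\D}$ (carrying the internal model, i.e.\ $R$ near $K(R)$, realized abstractly via the pinched polynomial-like restriction of $R$) to a copy $\Sigma_2$ of $\overline{\D}$ (carrying the external model $\cE$) along $\mathbb{S}^1$ via the boundary map induced by $\mathfrak g$ — equivalently, since $\mathfrak g$ is already a conjugacy in a pinched neighborhood of $\mathbb{S}^1$, the two partial maps (the pinched polynomial-like restriction of $R$ on $\Sigma_1$, and $\cE$ on $\Sigma_2$) agree on the overlap annulus and patch to a single holomorphic partially-defined map $\widehat f$ on a pinched neighborhood of the equatorial circle, with $\Sigma_1$ playing the role of $K(\widehat f)$-side. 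On $\Sigma_1$ the complex structure is the given one; on $\Sigma_2$ transport the standard structure of $\overline{\D}$ by the gluing homeomorphism and use $\mathfrak g^{-1}$ to get a Beltrami coefficient $\mu$ with $\|\mu\|_\infty<1$, invariant under $\widehat f$ by construction (it is the pullback under $\mathfrak g$ of the standard structure, which $\cE$'s conjugate $B_d$ preserves; and it is the standard structure, hence $\widehat f$-invariant, on $\Sigma_1$). Integrate $\mu$ by MRMT to get a quasiconformal $\Psi\colon\widehat{\C}\to\widehat{\C}$, and set $f:=\Psi\circ\widehat f\circ\Psi^{-1}$, $P_i:=\Psi(\text{the appropriate pinched polygon})$. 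Then $(f,P_1,P_2)$ is holomorphic, hence a pinched polynomial-like map; $\Psi$ restricted to $\Sigma_1$ is a hybrid conjugacy between $f$ and the pinched polynomial-like restriction of $R$ (it is conformal on $K(R)$ since $\mu$ is standard there), so $f$ is hybrid equivalent to $R$; and by the classical external-map construction (Subsection~\ref{pinched_poly_like_subsec}, Figure~\ref{ext_class_fig}) the external map of $f$ is conformally conjugate on the $\Sigma_2$-side to $\cE$, and in fact equals $\cE$ after the normalization of the uniformizing coordinate. Applying the remark following Definition~\ref{ppl_ext_conj_def} (that the last conjugacy condition forces conformal conjugacy in a pinched neighborhood of the circle) and Lemma~\ref{homeo_fibers_lem} lets me pin the external map down to $\cE$ itself rather than merely its piecewise-analytic class.

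The main obstacle is the behavior at the parabolic point and its preimages: the gluing circle is pinched there, so $\widehat f$ and $\widehat{\C}$ are not a priori quasiconformally nice at those points, and one must check that the glued object is genuinely a pinched polynomial-like map in the sense of Definition~\ref{pinched_poly_def} — in particular that the pinched polygon structure is preserved, that the corners/pinched points of $P_1$ map exactly to the corners of $P_2$, and that the Beltrami coefficient $\mu$ really is globally $L^\infty$ with norm $<1$ (the only danger being an essential blow-up of dilatation at the pinch points, which is precisely what Lemma~\ref{qc_conj_fd_bd_lem} was designed to rule out: the interpolation there was performed in the $\infty$-coordinate so that $\mathfrak g$ is genuinely quasiconformal up to and including $1$). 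One also needs the parabolic germs of $\cE$ at $1$ (simple, tangent-to-identity, with repelling directions along the imaginary axis — part~\eqref{para_cond} of Definition~\ref{farey_like_def} and the accompanying remark) to match those of $B_d$ at $1$ so that the glued map has a bona fide parabolic fixed point rather than merely a topological pinch; this is exactly the content of the parabolic asymptotics recorded in Subsection~\ref{para_asymp_subsec} and the compatibility established in Lemma~\ref{qc_conj_fd_bd_lem}. Once these points are in hand, the MRMT/surgery argument is the standard Douady–Hubbard one adapted to the pinched category, and the uniqueness-up-to-affine-conjugacy statement (which we do not need here, only existence) would follow as in Lemma~\ref{homeo_fibers_lem}.
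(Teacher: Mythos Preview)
Your approach is essentially the paper's: transport $\cE$ into $\A(R)$ via $\psi_R\circ\mathfrak g$, build a quasiregular map that equals $R$ on $K(R)$ and is a conjugate of $\cE$ outside, pull back the standard structure through $\mathfrak g^{-1}\circ\psi_R^{-1}$ to get an invariant Beltrami coefficient, and straighten by MRMT. The paper carries this out more directly by working on the $R$-sphere itself rather than on an abstractly glued sphere $\Sigma_1\sqcup_{\mathbb{S}^1}\Sigma_2$: one simply sets
\[
\widetilde f:=\begin{cases}(\psi_R\circ\mathfrak g)\circ\cE\circ(\mathfrak g^{-1}\circ\psi_R^{-1}) & \text{on }\psi_R(\mathfrak g(X_1\cap\D)),\\ R & \text{on }K(R),\end{cases}
\]
and $\mu:=(\mathfrak g^{-1}\circ\psi_R^{-1})^*(\mu_0)$ on $\A(R)$, $\mu\equiv 0$ on $K(R)$. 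Your phrasing ``a copy $\Sigma_1$ of $\overline{\D}$ carrying the internal model $R$ near $K(R)$'' is not quite well-defined (the filled Julia set $K(R)$ does not sit inside a disc in any canonical way), and it is this abstraction that forces you to appeal to Lemma~\ref{homeo_fibers_lem} at the end to identify the external map. In the direct construction the straightening map $\kappa$ is conformal on $\A(R)$ by design, so $(\kappa\circ\psi_R\circ\mathfrak g)^{-1}$ is a genuine conformal conjugacy from $f|_{P_1\setminus K(f)}$ to $\cE|_{X_1\cap\D}$, and the external map is $\cE$ on the nose with no further argument.
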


\begin{proof}
There exists a conformal map $\psi_R:\D\to \mathcal{A}(R)$ that conjugates $B_d$ to $R$, and extends continuously to $1$ with $\psi_R(1)=\infty$.
Recall that the quasiconformal homeomorphism $\mathfrak{g}:\D\to\D$ constructed in Lemma~\ref{qc_conj_fd_bd_lem} conjugates $\cE$, restricted to a one-sided pinched neighbourhood of $\mathbb{S}^1$ (pinched at the points of $\cE^{-1}(1)$) to $B_d$, restricted to a one-sided pinched neighbourhood of $\mathbb{S}^1$ (pinched at the points of $B_d^{-1}(1)$).

We now define a map on a subset of $\widehat{\C}$ as follows:
$$
\widetilde{f}:=
\begin{cases}
\left(\psi_R\circ \mathfrak{g}\right)\circ \cE\circ\left(\mathfrak{g}^{-1}\circ\psi_R^{-1}\right)\ {\rm on\ } A,\\
R \quad {\rm on\ } K(R).
\end{cases}
$$
where $A=\psi_R(\mathfrak{g}(X_1\cap\D))$ if one uses the first proof of Lemma~\ref{qc_conj_fd_bd_lem}, and $A=\psi_R(\mathfrak{g}(X_1\cap\D\cup (d_{\mathcal F}^+ \cup d_{\mathcal F}^-))$ if one uses the second proof.
By the equivariance properties of $\psi_R$ and $\mathfrak{g}$, the map $\widetilde{f}$ agrees with $R$  on the closure of a neighbourhood of $K(R)\setminus R^{-1}(\infty)$. By quasiconformal removability of finitely many points, the map $\widetilde{f}$ is a quasiregular map on the interior of its domain of definition (see Figure~\ref{top_mating_fig}). 

Let $\mu$ be the Beltrami coefficient on $\widehat{\C}$ defined as $\mu\vert_{\mathcal{A}(R)}=(\mathfrak{g}^{-1}\circ\psi_R^{-1})^*(\mu_0)$, where $\mu_0$ is the standard complex structure on $\D$, and $\mu\vert_{K(R)}=0$. Since $\cE$ is holomorphic, it follows that $\mu$ is $\widetilde{f}$-invariant. Quasiconformality of $\mathfrak{g}^{-1}\circ\psi_R^{-1}$ implies that $\vert\vert\mu\vert\vert_\infty<1$. Let $\kappa$ be a quasiconformal homeomorphism of $\widehat{\C}$ that solves the Beltrami equation with coefficient $\mu$. Then the conjugated map $f:=\kappa\circ\widetilde{f}\circ\kappa^{-1}$ is holomorphic on the interior of a pinched polygon 
$$
P_1:= \kappa(\mathrm{Dom}(\widetilde{f})).
$$
\begin{figure}[h!]
\captionsetup{width=0.96\linewidth}
\begin{tikzpicture}
\node[anchor=south west,inner sep=0] at (0,0) {\includegraphics[width=0.45\textwidth]{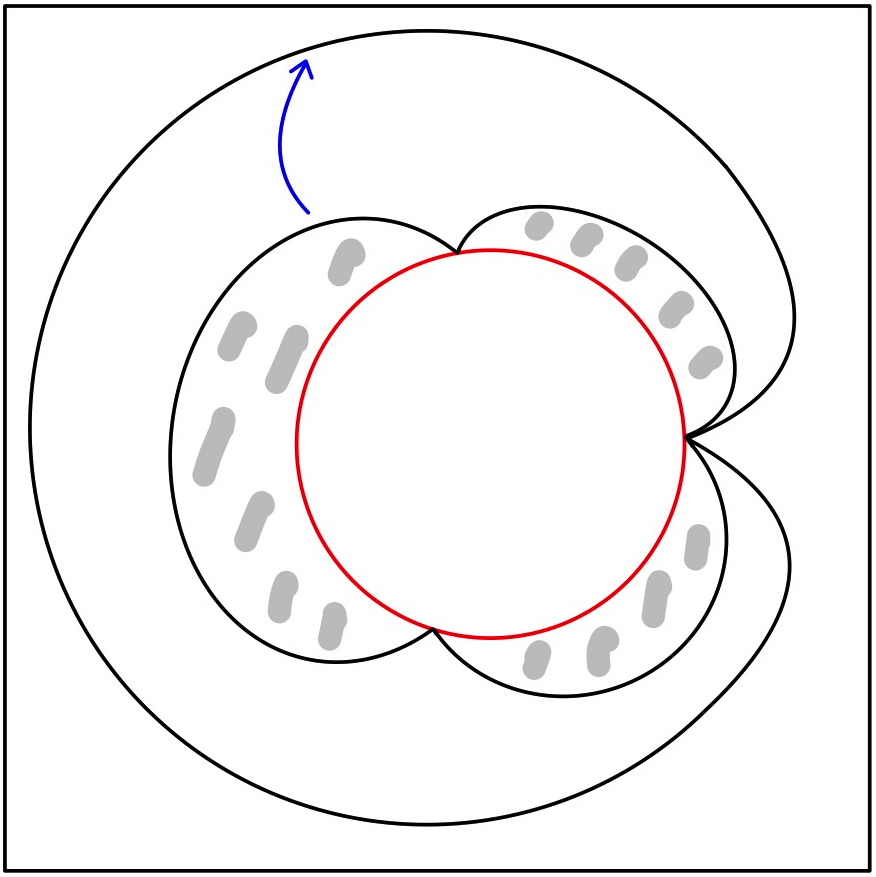}};
\node[anchor=south west,inner sep=0] at (6.4,0) {\includegraphics[width=0.455\textwidth]{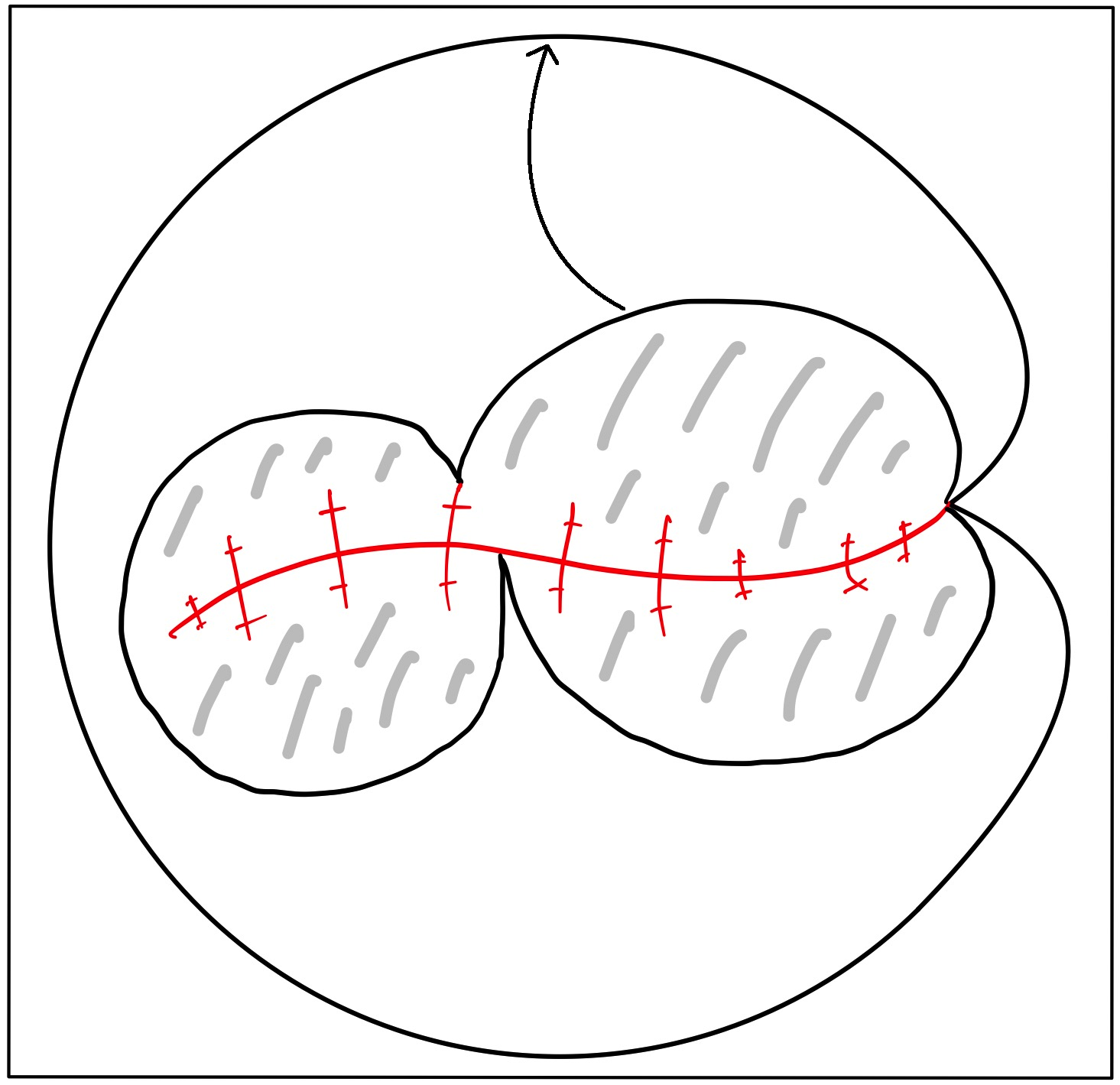}};
\node at (1.64,4.8) {$\cE$};
\node at (2.32,2.4) {$\mathbb{S}^1$};
\node at (8.9,4.8) {$\widetilde{f}$};
\end{tikzpicture}
\caption{The domain of definition for the topological mating $\widetilde{f}$.}
\label{top_mating_fig}
\end{figure}
It is easy to see from the construction that $f:P_1\to P_2:=f(P_1)$ is a degree $d$ pinched polynomial-like map with filled Julia set $K(f)=\kappa(K(R))$. Since $\kappa^{-1}$ is conformal a.e. on $K(f)$, it hybrid conjugates the above pinched polynomial-like map to a pinched polynomial-like restriction of $R$ (with filled Julia set $K(R)$).

Finally, the restriction $f: P_1\setminus K(f)\to P_2\setminus K(f)$ is conformally conjugate to $\cE:X_1\cap\D\to X_2\cap\D$ via the map $(\kappa\circ\psi_R\circ\mathfrak{g})^{-1}$. 
\end{proof}

\subsection{Pinched polynomial-like maps as matings of $R$ and $F_d$}\label{farey_pinched_poly_subsec}

The Farey map $F_d:F_d^{-1}(\cD_1)\to\cD_1$ is an example of a Farey-like map in the sense of Definition~\ref{farey_like_def}. This observation, combined with Lemma~\ref{farey_pinched_poly_subsec}, allows us to construct a pinched polynomial-like map as the mating of a parabolic rational map $R\in\pmb{\mathcal{B}}_d$ and the map $F_d$.

\begin{theorem}\label{farey_mating_thm}
There exists a pinched polynomial-like map $R_F$ that is hybrid equivalent to $R$ and has $F_d$ as its external map. Moreover, the map $R_F$ admits an analytic continuation with the following properties.

\begin{enumerate}[leftmargin=8mm]
    \item There exist a Jordan domain $\cU$ and a continuous extension $R_F:\overline{\cU}\to\widehat{\C}$ which is meromorphic on $\cU$ such that $R_F:R_F^{-1}(\overline{\cU})\to\overline{\cU}$ is a degree $d$ pinched polynomial-like map hybrid conjugate to a pinched polynomial-like restriction of $R$ (with filled Julia set $K(R)$), and $R_F:\overline{\cU}\setminus K(R_F)\to\widehat{\C}\setminus K(R_F)$ is conformally conjugate to $F_d:\cD_1\cap\D\to\D$.

    \item The Jordan curve $\partial\cU$ is non-singular real-analytic except possibly at the fixed point $\pmb{s}$ of $R_F$ that corresponds to the parabolic fixed point $1$ of~$F_d$.
\end{enumerate}
\smallskip

(Here, $K(R_F)$ denotes the filled Julia set of the pinched polynomial-like restriction of $R_F$, and $\cD_1$ is defined in Section~\ref{farey_map_subsec}).
\end{theorem}
\begin{proof}
Since the map $F_d:F_d^{-1}(\cD_1)\to\cD_1$ is Farey-like, Lemma~\ref{conf_mating_lem} gives a pinched polynomial-like map $R_F$ that is a mating of $R$ and $F_d:F_d^{-1}(\cD_1)\to\cD_1$ (where the parabolic fixed point $\infty$ of $R$ is glued with the parabolic fixed point $1$ of $F_d$). 

By construction, there exists a conformal map $\psi_{R_F}:\widehat{\C}\setminus K(R_F)\to\D$ that conjugates $R_F$ to $F_d$, wherever defined. We extend the pinched polynomial-like map $R_F$ as the conjugate of $F_d:\cD_1\cap\D\to\D$ via $\psi_{R_F}$. We define $\cU$ to be the interior of the domain of definition $K(R_F)\cup\psi_{R_F}^{-1}(\cD_1\cap\D)$ of the extended map $R_F$.
Since the inner boundary of $\cD_1$ meets $\mathbb{S}^1$ at a unique point, it follows that $\cU$ is a Jordan domain. 

The final statement is a consequence of the fact that $\partial\cU\setminus\{\pmb{s}\}$ is the image of the non-singular real-analytic curve $\partial\cD_1\cap\D$ under the conformal map~$\psi_{R_F}^{-1}$.
\end{proof}

\subsection{Pinched polynomial-like maps as matings of $R$ and $H_d$}\label{hecke_pinched_poly_subsec} 

In order to construct a Farey-like restriction of the Hecke map, let us thicken the arc $[0,1]$ to a monogon $\gamma$ that starts and ends at $1$ (and is disjoint from $\mathbb{S}^1$ otherwise) and surrounds the arc $[0,1]$ (see Figure~\ref{farey_like_fig} (right)). We denote the component of $\D\setminus\gamma$ containing the origin by $\mathbb{L}$. It is easy to see from the parabolic dynamics of $H_d$ at $1$ that $H_d:H_d^{-1}(\overline{\D}\setminus\mathbb{L})\to\overline{\D}\setminus\mathbb{L}$ is Farey-like.

\begin{theorem}\label{hecke_mating_thm}
There exists a pinched polynomial-like map $R_H$ that is hybrid equivalent to $R$ and has $H_d$ as its external map. Moreover, there exist a pinched polygon $\cV$ and a continuous extension $R_H:\cV\to\widehat{\C}$ which is meromorphic on $\Int{\cV}$ such that $R_H:R_H^{-1}(\cV)\to\cV$ is a degree $d$ pinched polynomial-like map hybrid conjugate to a pinched polynomial-like restriction of $R$ (with filled Julia set $K(R)$), and $R_H:\cV\setminus K(R_H)\to\widehat{\C}\setminus K(R_H)$ is conformally conjugate to $H_d:\cD_2\cap\D\to\D$.
\smallskip

\noindent (Here, $K(R_H)$ denotes the filled Julia set of the pinched polynomial-like restriction of $R_H$, and $\cD_2$ is defined in Section~\ref{hecke_map_subsec}.)
\end{theorem}
\begin{proof}
By the discussion preceding this corollary, the Hecke map $H_d$ admits a Farey-like restriction. Hence, the existence of the desired pinched polynomial-like map $R_H$ (which is hybrid equivalent to $R$ and has the above Farey-like restriction of $H_d$ as its external map) follows from Lemma~\ref{conf_mating_lem}.

Note also that by construction, there exists a conformal map $\psi_{R_H}:\widehat{\C}\setminus K(R_H)\to\D$ that conjugates $R_H$ to $H_d$, wherever defined. One can now extend the pinched polynomial-like map $R_H$ as the conjugate of $H_d:\cD_2\cap\D\to\D$ via $\psi_{R_H}$. The pinched polygon $\cV$ is then the domain of definition $K(R_H)\cup\psi_{R_H}^{-1}(\cD_2\cap\D)$ of the extended map $R_H$.
\end{proof}

\begin{figure}
\captionsetup{width=0.96\linewidth}
\includegraphics[width=4cm]{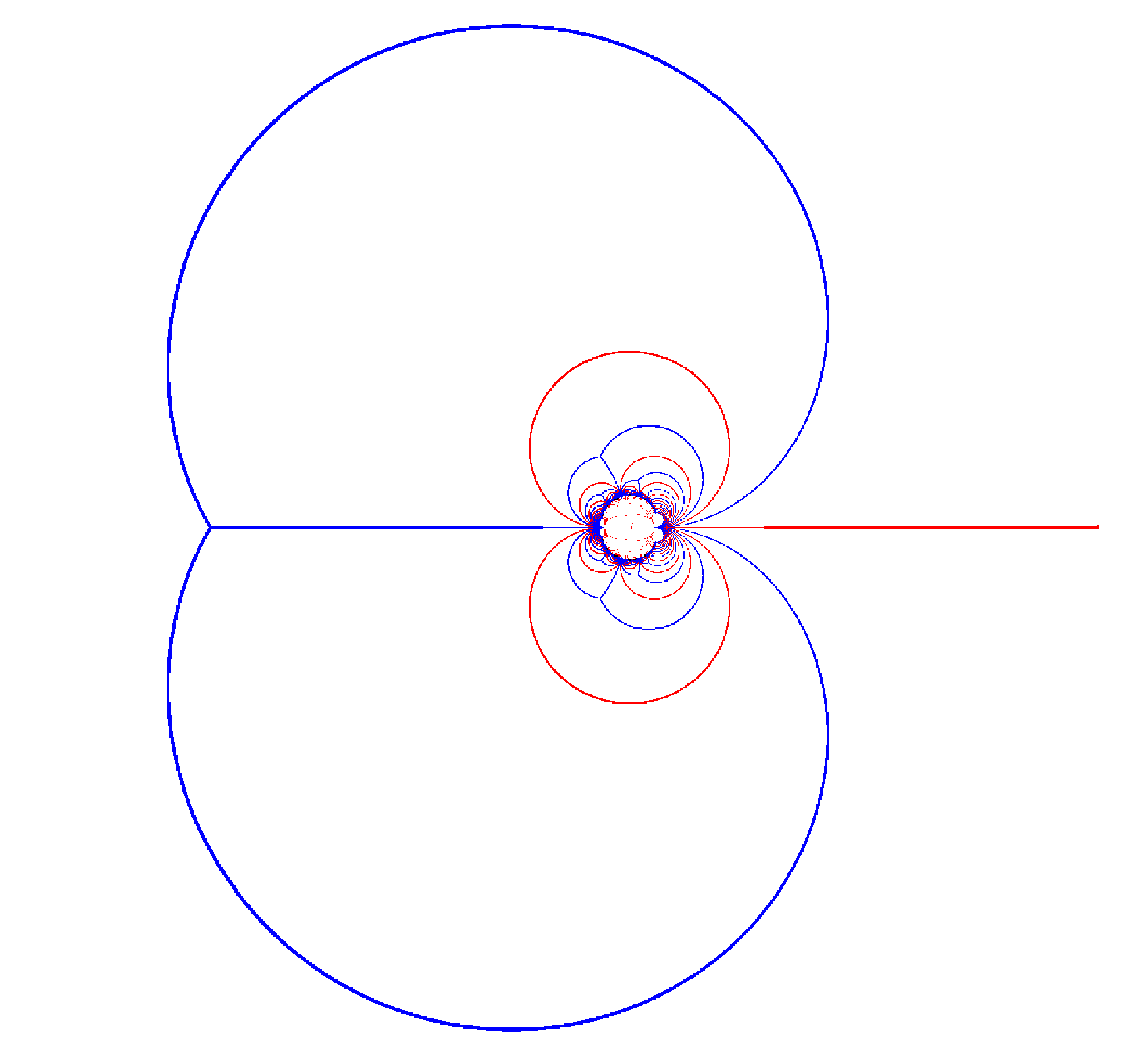}\hspace{1cm}\includegraphics[width=4cm]{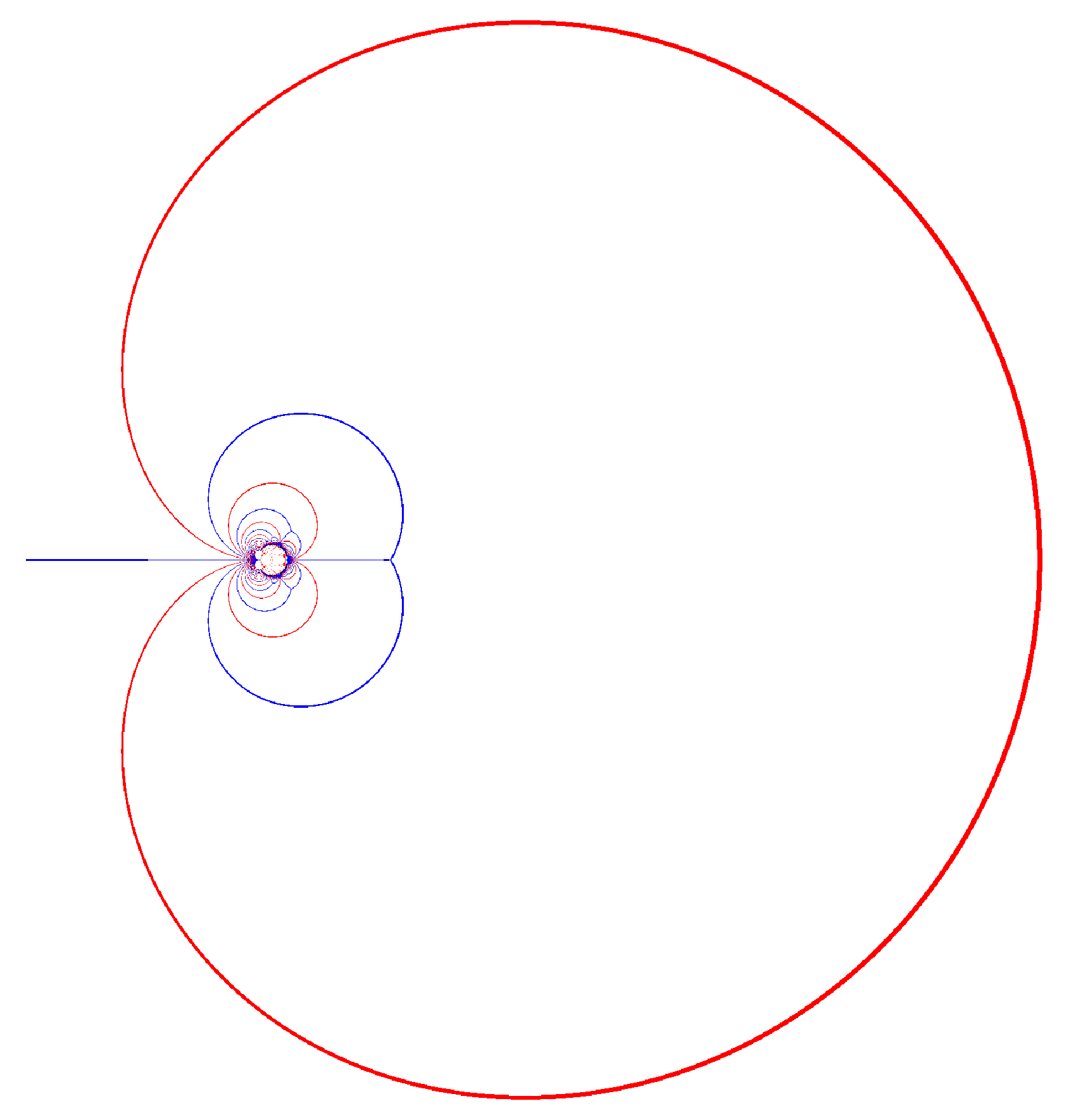}

\includegraphics[width=4cm]{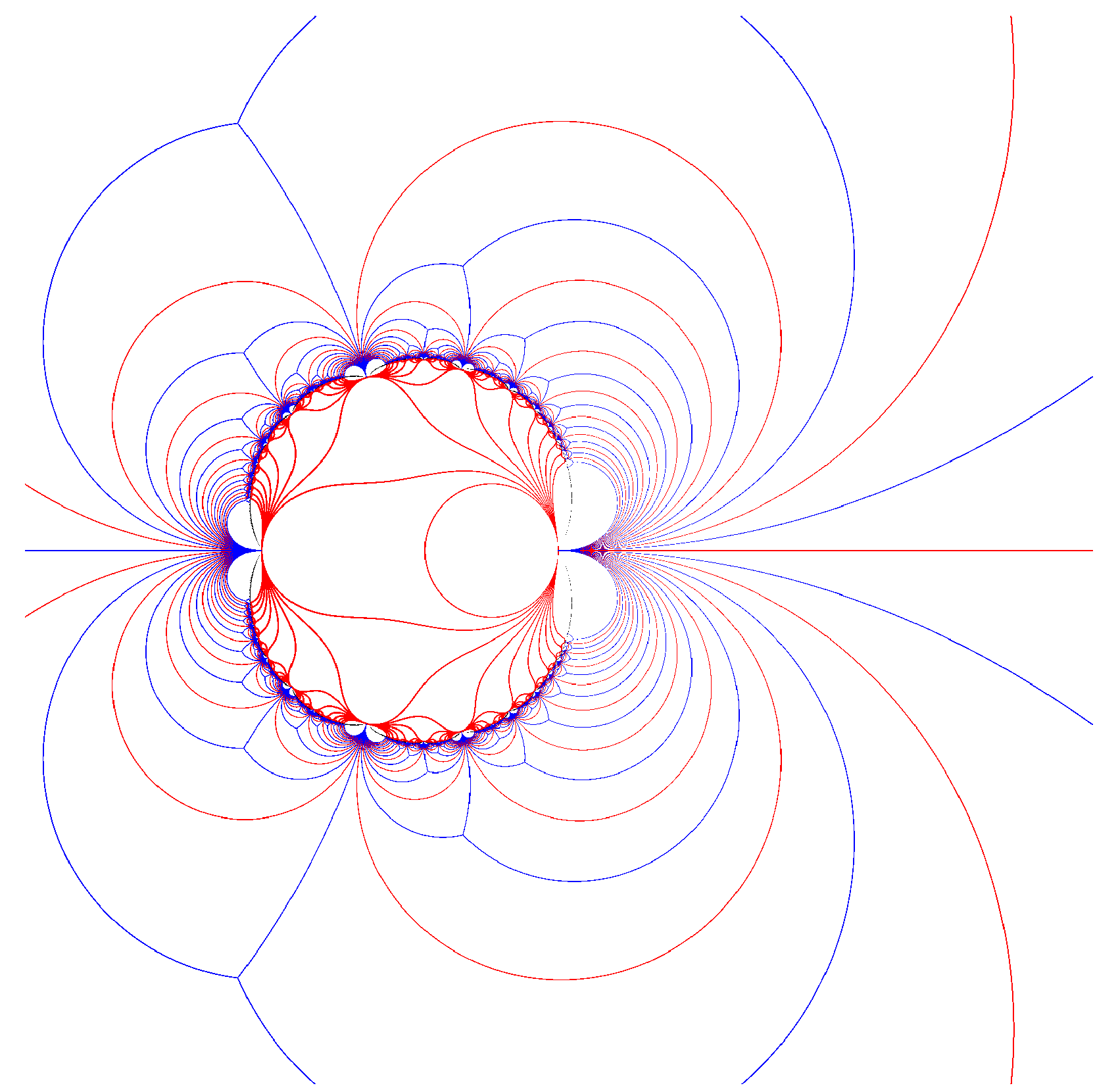}\hspace{1cm}\includegraphics[width=4cm]{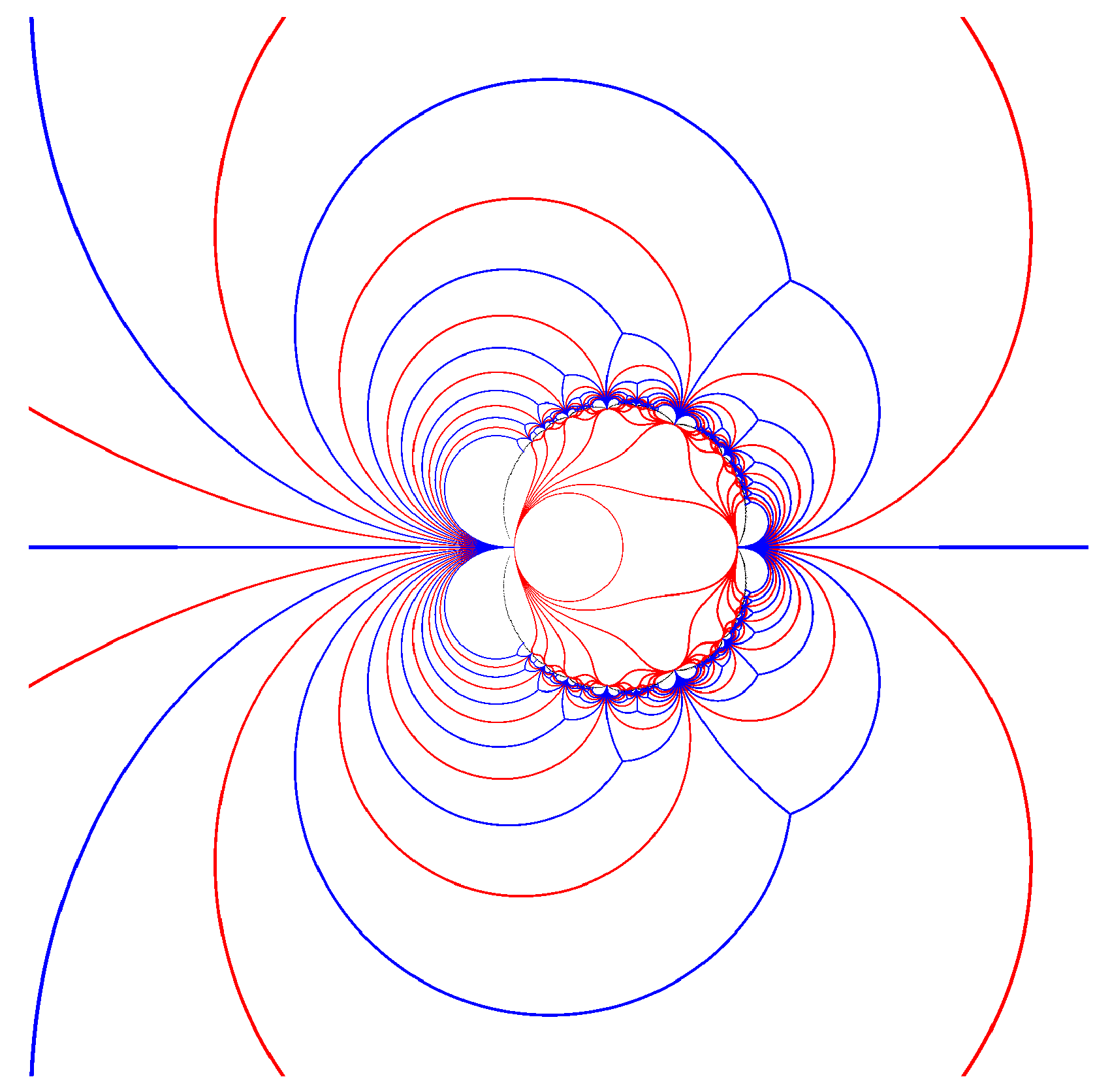}

\includegraphics[width=4cm]{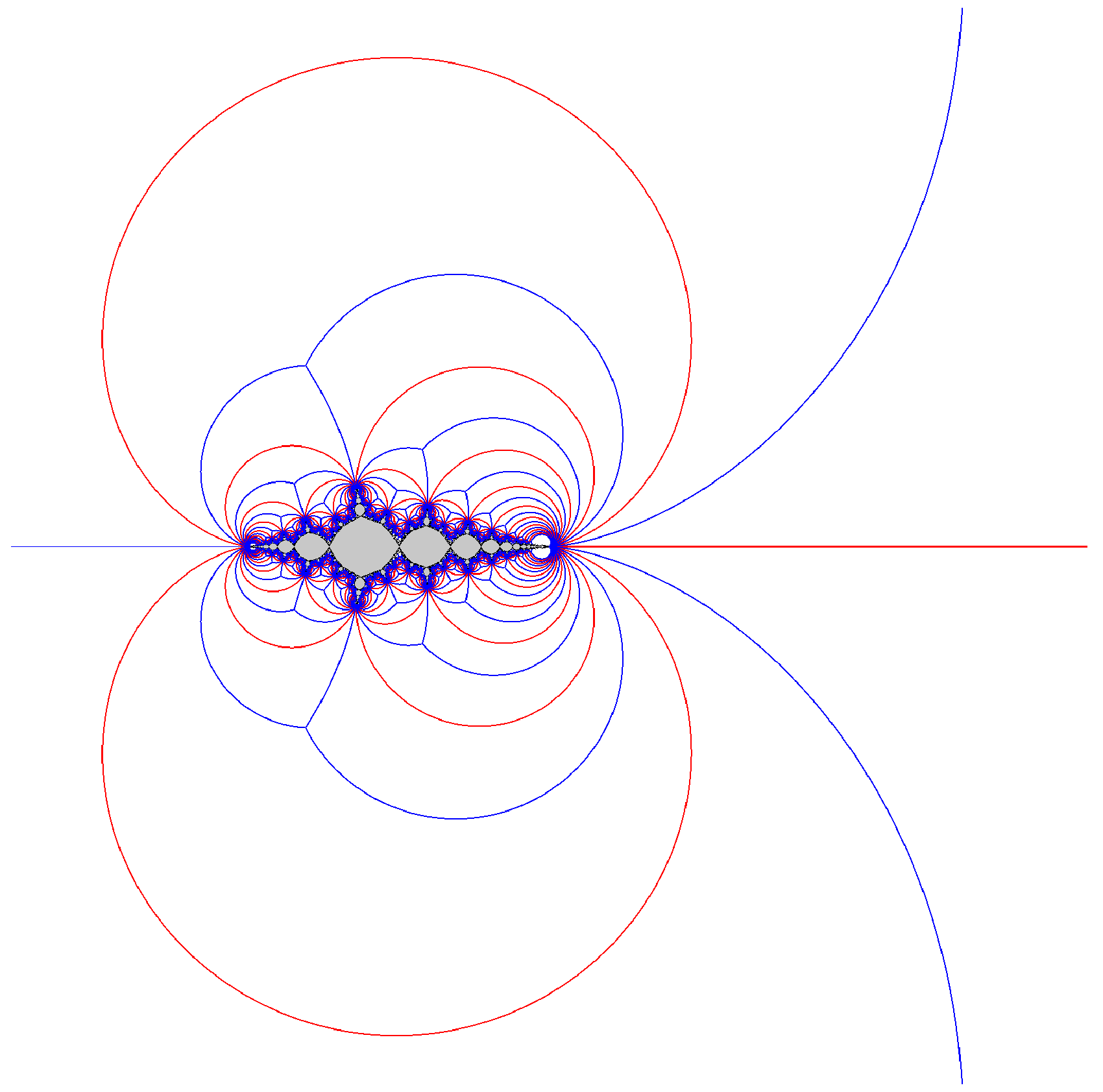}\hspace{1cm}\includegraphics[width=4cm]{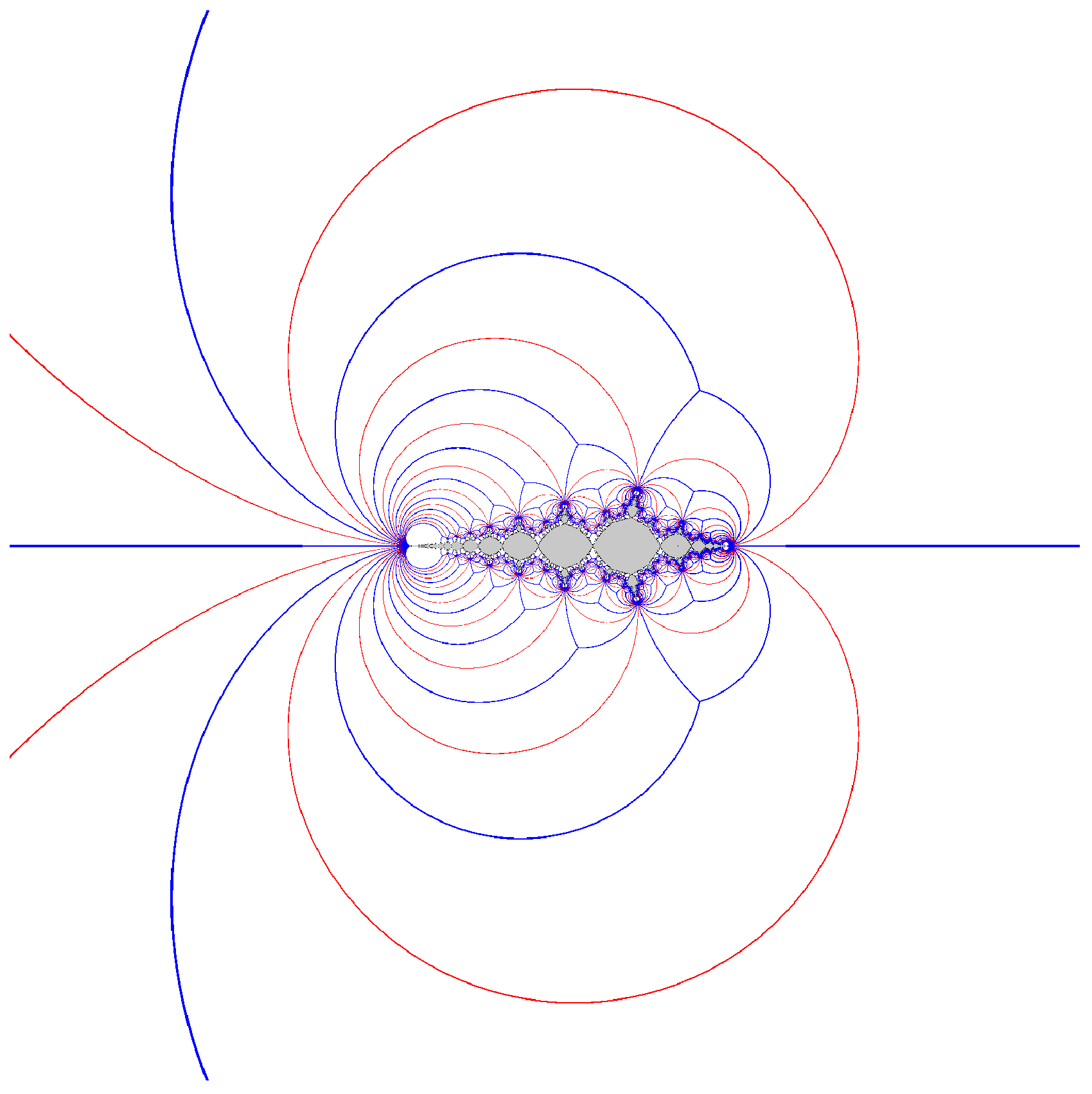}

\caption{Computer plots of $R_H$ (left-hand column) and $R_F$ (right-hand column): see text for details.}\label{RHandRF}
\end{figure}

In Figure \ref{RHandRF} we exhibit computer plots of $R_H$ and $R_F$ for two examples of rational maps $R\in\pmb{\mathcal{B}}_2$, firstly for $R$ the Blaschke product map $B_2$ 
(top row zoomed out, and middle row near their Julia sets), and secondly (bottom row) for $R$ the basilica, 
the map $R \in\pmb{\mathcal{B}}_2$ determined by the condition 
that the critical point of $R$ in the filled Julia set $K(R)$ is periodic of period two. On the complement of $K(R_H)$ (respectively, $K(R_F)$) can be seen a copy of the tiling of 
$\theta_2(\D)=\D/\langle\sigma\rangle$ (respectively, the tiling
of $\theta_1(\D)=\D/\langle\rho\rangle$). We note that the tilings of $\theta_1(\D), \theta_2(\D)$ are the images of the tessellation of the unit disk induced by the Hecke group $\cH_{d+1}$ (see Section~\ref{hecke_group_subsec}) under $\theta_1, \theta_2$, respectively. Notice that in the case of $R=B_2$, the Julia set is a quasicircle: the mating construction  
has quasiconformally replaced the $B_2$ action on $\widehat \C \setminus K(B_2)$, the Schwarz reflection of the action of $B_2$ on the round disc $K(B_2)$, by the action of the Hecke map 
on $\theta_2(\D)$ (respectively the Farey map on $\theta_1(\D)$).

\section{From pinched polynomial-like maps to algebraic correspondences}\label{ppl_like_to_corr_sec}

In the previous sections, we extracted two piecewise analytically conjugate external maps from the Hecke group, and showed that they can be mated with parabolic rational maps in $\pmb{\mathcal{B}}_d$ as pinched polynomial-like maps. For a given $R\in\pmb{\mathcal{B}}_d$, the resulting two pinched polynomial-like maps $R_F$ and $R_H$ are also (piecewise) analytically conjugate. However, the globalizations of these semi-global maps (defined on proper subsets of the sphere) to holomorphic correspondences on the Riemann sphere are carried out in different ways. Specifically, since the Farey (respectively, Hecke) map was defined on a quotient of $\D$ by an order $d+1$ (respectively, order two) group element, one needs to pass to a $(d+1)-$fold cover of the $R_F-$plane (respectively, a double cover of the $R_F-$plane) to construct the desired correspondences.

We carry out the construction of the correspondence from the pinched polynomial-like maps $R_F$ and $R_H$ in the next two subsections that are independent of each other. Once again, readers are invited to select the proof they find more appealing.

\subsection{Algebraic correspondence from the pinched polynomial-like map $R_F$}\label{corr_from_b_inv_sec}

In this subsection, we will promote the pinched polynomial-like map $R_F$ constructed in Theorem~\ref{farey_mating_thm} (for $R\in\pmb{\mathcal{B}}_d$) to an algebraic correspondence on the Riemann sphere that is a mating of $R$ and the Hecke group $\mathcal{H}_{d+1}$.

The first step in the passage from the map $R_F$ to the correspondence is to recognize the map $R_F$ as an explicit algebraic function. To this end, we introduce a class of algebraic maps which we term \emph{B-involutions}, and show that the maps $R_F$ indeed belong to this class. Roughly speaking, a B-involution is semi-conjugate to the M{\"o}bius involution $J(z)= 1/z$ via a rational map $Q$. It then turns out that the dynamics of the correspondence $\G$ generated by the involution $J$ and the deck transformations of the rational map $Q$ can be studied profitably from the action of the map $R_F$. In fact, the correspondence $\G$ can be regarded as the lift of the map $R_F$ by the rational map $Q$. This allows us to lift the dynamical structure of $R_F$ (which is a mating of the hybrid class of $R$ and the external map $F_d$) to the correspondence plane, and deduce that $\G^{-1}$ is a mating of $R$ and the Hecke group $\mathcal{H}_{d+1}$ in the sense of Definition~\ref{mat}.

\subsubsection{B-involutions}\label{b_inv_subsec}

Throughout the rest of this section, we will use the notation $J(z)=1/z$.

\begin{definition}[B-involutions]\label{b_inv_def}
Let $Q$ be a rational map of degree $(d+1)$ and $\mathfrak{D}$ a Jordan disc such that 
\begin{enumerate}\upshape
\item $1, -1\in\partial\mathfrak{D}$,
\item $J(\partial\mathfrak{D})=\partial\mathfrak{D}$,
\item $\mathfrak{P}\subset\partial\mathfrak{D}$ be a finite set such that $J(\mathfrak{P})=\mathfrak{P}$ and $\partial\mathfrak{D}\setminus\mathfrak{P}$ is a union of non-singular real-analytic curves, 
\item for $z\in\partial\mathfrak{D}$, we have $Q'(z)= 0\ \iff\ z\in\mathfrak{P}$, and
\item $Q\vert_{\overline{\mathfrak{D}}}$ is injective; let $\cU:=Q(\mathfrak{D})$.
\end{enumerate}
We set $\mathfrak{S}:=Q(\mathfrak{P})\in\partial\cU$, and call the meromorphic map 
$$
S:=Q\circ J\circ\left(Q\vert_{\overline{\mathfrak{D}}}\right)^{-1}:\overline{\cU}\to\widehat{\C}
$$ 
the \emph{B-involution} associated with $\cU$.
\end{definition}

The name \emph{B-involution} is motivated by the following key property: the map $S$ induces an orientation-reversing self-involution on the boundary $\partial\cU$ of its domain of definition.

Since $Q$ has no critical point on $\partial\mathfrak{D}\setminus\mathfrak{P}$, the B-involution $S$ extends to a conformal involution on a neighbourhood of $\partial\cU\setminus\mathfrak{S}$ that sends points in $\cU$ to the exterior of $\cU$.
Moreover, $S: S^{-1}(\cU)\to\cU$ is a proper (branched) covering map of degree $d$, and $S: S^{-1}(\Int{\cU^c})\to\Int{\cU^c}$ is a (branched) degree $(d+1)$ covering map.

\subsubsection{Description of the conformal mating $R_F$ as a B-involution}\label{R_F_b_inv_subsec}

We now state and prove the key technical lemma for the construction of the correspondence $\G$ from the map $R_F$; i.e., we show that the map $R_F:\overline{\cU}\to\widehat{\C}$ is a B-involution associated with a degree $d+1$ polynomial $Q$. The principal feature of $R_F$ that goes into the proof of this statement is that $R_F$ restricts to the boundary $\partial\cU$ of its domain of definition as an orientation-reversing self-involution. Roughly speaking, the Riemann sphere where the desired correspondence lives is constructed by welding two copies of $\overline{\cU}$, where $R_F\vert_{\partial\cU}$ is used as the boundary identification map. 

The subtlety in carrying out this task stems from the fact that $R_F$ is not analytic in a neighborhood of the point $\pmb{s}\in\partial\cU$, which corresponds to the parabolic fixed point $1$ of $F_d$ (see Theorem~\ref{farey_mating_thm}). Thus, it needs to be justified that the conformal welding process indeed yields a Riemann sphere (in other words, the point $\pmb{s}$ corresponds to a single point, and not a hole, on the welded object).
We will present two ways of handling this issue: an analytic approach which exploits quasisymmetry properties of $F_d$ near the point $\pmb{s}$, and a softer (somewhat more geometric) approach which determines the type of the welded Riemann surface by constructing a special meromorphic function on it.

\begin{lemma}\label{conf_mating_description_lem}
The conformal mating $R_F:\overline{\cU}\to\widehat{\C}$ of $R$ and $F_d:\cD_1\to\overline{\D}$ is a B-involution. Further, the map $Q$ can be chosen to be a polynomial of degree $d+1$ and $\mathfrak{P}$ can be taken to be the singleton $\{1\}$. 
\end{lemma}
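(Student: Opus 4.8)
The key structural input, which I would extract from Corollary~\ref{farey_hecke_mating_cor}, is that $R_F$ restricts to the Jordan curve $\partial\cU$ as an orientation-reversing involution $\iota$ with exactly two fixed points: away from $\pmb s$ the conformal map $\psi_{R_F}$ conjugates $R_F|_{\partial\cU}$ to $F_d$ on $\partial\mathfrak{H}_1\setminus\{1\}$, which by property~(4) of the Farey map in Section~\ref{ext_maps_subsec} is a piece of the orientation-reversing involution $\theta_1\circ\sigma\circ\underline{\theta_1}^{-1}$ (here $\sigma$ is a genuine M\"obius involution and $C_1$ one of its axes), and continuity extends this over the cusp $\pmb s$. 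Since $\partial\cU$ is non-singular real-analytic off $\pmb s$, the involution $\iota$ is real-analytic there, so by Schwarz reflection $R_F$ extends to a conformal involution on a punctured neighbourhood of $\partial\cU\setminus\{\pmb s\}$ interchanging the two sides of $\partial\cU$ --- precisely the ``involutive'' behaviour demanded of a B-involution.

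I would then manufacture $Q$, $\mathfrak D$ and $J$ by a conformal welding. Let
$$
\Sigma:=\bigl(\overline{\cU}\times\{-\}\bigr)\ \sqcup\ \bigl(\overline{\cU}\times\{+\}\bigr)\ \big/\ \bigl((x,-)\sim(\iota(x),+),\ x\in\partial\cU\bigr)
$$
be the topological sphere obtained by welding two copies of $\overline{\cU}$ along their boundaries using $R_F|_{\partial\cU}$. Over the image of $\partial\cU\setminus\{\pmb s\}$ the gluing is along a non-singular real-analytic arc, so $\Sigma\setminus\{[\pmb s]\}$ carries a canonical Riemann surface structure in which the two copies of $\cU$ embed as conformal discs and the deck involution $\eta$ interchanging them is anti-conformal and restricts to $\iota$ on the welding curve. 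The crux is to show that $[\pmb s]$ is removable, i.e.\ that this structure extends to a Riemann surface structure on all of $\Sigma$; uniformization would then identify $\Sigma$ with $\widehat{\C}$. Granting this, I would take $\mathfrak D:=\cU\times\{-\}$, define $Q$ on $\overline{\mathfrak D}$ to be the tautological identification with $\overline{\cU}\subset\widehat{\C}$, and extend $Q$ across the welding curve by $Q(J(z)):=R_F(Q(z))$ for $z\in\overline{\mathfrak D}$, where $J$ is the holomorphic involution of $\Sigma$ interchanging $\mathfrak D$ with the complementary disc $\mathfrak D^*$ and agreeing with $\eta$ (hence with $\iota$) on $\partial\mathfrak D$. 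Conformal removability of the real-analytic welding curve (and of the point $[\pmb s]$) makes $Q$ meromorphic; counting fibres over $\cU$ and over $\widehat{\C}\setminus\overline{\cU}$, using that $R_F$ has $d$ preimages over points of $\cU$ and $d+1$ over points of the complement, gives $\deg Q=d+1$. Normalizing coordinates so that the fixed points of $J$ are $\pm 1$ puts $J$ in the form $z\mapsto 1/z$, and placing the totally ramified point of $Q$ --- the lift of the multiplicity-$d$ critical point of $F_d$, whose critical value is $\psi_{R_F}^{-1}(0)$ --- at $\infty$ turns $Q$ into a polynomial. Finally I would verify the axioms of Definition~\ref{b_inv_def}: (2), (3) and the identity $R_F=Q\circ J\circ(Q|_{\overline{\mathfrak D}})^{-1}$ hold by design; and since $\partial\cU$ is non-singular real-analytic away from $\pmb s$, the uniformizing map $Q$ is non-critical at every point of $\partial\mathfrak D$ except at the preimage of $\pmb s$, where the parabolic cusp of $\partial\cU$ is unfolded by a single simple critical point --- the local model being the square-root asymptotics $\zeta\mapsto 1+b(\zeta-1)^{1/2}+\cdots$ from the proof of Lemma~\ref{qc_conj_fd_bd_lem} --- so that $\mathfrak P=\{1\}$.

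The main obstacle, and the only step requiring genuine work, is showing that $\Sigma$ is compact, equivalently that the open Riemann surface $\Sigma\setminus\{[\pmb s]\}$, which is homeomorphic to $\C$, is conformally $\C$ rather than $\D$ (``$[\pmb s]$ is a point, not a hole''). I would establish this in two independent ways, paralleling the two proofs of Lemma~\ref{qc_conj_fd_bd_lem}. (i) \emph{Analytic approach.} Using the parabolic asymptotics of $F_d$ at $1$ from Subsection~\ref{para_asymp_subsec} and the coordinate changes $\pmb{j}_i,\pmb{k}_i$ that send $\pmb s$ to $\infty$ and straighten the two boundary cusps of the fundamental pinched annuli into half-strips, one sees that in Fatou coordinates the welding map is, near $\pmb s$, asymptotically a translation with bounded error, hence quasisymmetric; the welding is then a quasiconformal welding along a Jordan curve, which always yields $\widehat{\C}$. (ii) \emph{Geometric approach.} One constructs directly a non-constant meromorphic function on $\Sigma$ --- for instance by welding $R_F$ on one copy of $\overline{\cU}$ to a suitable conformal chart on the other across the real-analytic part of $\partial\cU$, and checking via the same cusp analysis that the result extends holomorphically over $[\pmb s]$ (with a single simple pole there) --- which forces $\Sigma$ to be a compact Riemann surface of genus $0$. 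Everything else is bookkeeping about degrees, critical points and the already-established conformal conjugacy with $F_d$.
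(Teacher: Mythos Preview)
Your proposal is correct and follows the same blueprint as the paper. The welding construction is exactly the paper's ``second proof'', and your two methods for resolving the type problem at $[\pmb s]$ parallel the paper's first (quasisymmetric/analytic) and second (geometric) proofs.

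Two small points. First, the deck involution $\eta$ you build is \emph{conformal}, not anti-conformal: in the tautological charts on the two copies of $\cU$ it is the identity, and indeed you yourself call $J$ the holomorphic involution a few lines later. Orientation reversal of $\iota$ on the curve $\partial\cU$ does not translate into anti-holomorphy of the 2D extension; this is just a slip, but worth fixing since $J(z)=1/z$ must be M\"obius.

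Second, your route (ii) can be sharpened to avoid any circularity. As stated, ``checking via cusp analysis that the result extends holomorphically over $[\pmb s]$'' presupposes a complex structure at $[\pmb s]$, which is precisely what is in question. The paper sidesteps this: the glued map $Q$ is a priori only defined on $\Sigma\setminus\{[\pmb s]\}$ minus the finitely many preimages of $\pmb s$, but it is a degree $d{+}1$ holomorphic branched covering onto the \emph{finite-type} surface $\widehat{\C}\setminus\{\pmb s\}$. Finite branched covers of finite-type Riemann surfaces are themselves of finite type, so the domain is a finitely punctured sphere; hence $\Sigma\setminus\{[\pmb s]\}\cong\C$ and the puncture is removable. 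No local analysis at $\pmb s$ is needed.
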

\begin{proof}
The proof of the lemma is based on certain analytic quality of the map $R_F\vert_{\partial\cU}$ near the point $\pmb{s}$. Specifically, we will show that a quasiconformal uniformization $\D\to\cU$ conjugates the map $R_F\vert_{\partial\cU}$ to a quasisymmetric involution of the circle $\mathbb{S}^1$. The proof of this fact requires analytic control on various steps in the construction of $R_F$; more precisely, one needs to investigate the intermediate maps appearing in the proofs of Lemmas~\ref{qc_conj_fd_bd_lem},~\ref{conf_mating_lem} and Theorem~\ref{farey_mating_thm}.

We recall the notation $\mathfrak{H}_1:=\D\setminus\Int{\cD_1}$ (see Section~\ref{farey_map_subsec}). The first step in the proof of Lemma~\ref{qc_conj_fd_bd_lem}, applied to map $F_d$, constructs a closed Jordan disc $\mathfrak{T}$ from $\mathfrak{H}_1$, that touches $\mathbb{S}^1$ only at $1$ and whose boundary $\partial\mathfrak{T}$ comprises a part of $\partial\mathfrak{H}_1$ and a pair of straight line segments emanating from $1$.
The global quasiconformal map $\mathfrak{g}$ of Lemma~\ref{qc_conj_fd_bd_lem} carries $\mathfrak{T}$ onto $\overline{\mathcal{P}}$ (where $\mathcal{P}$ is an appropriate attracting petal for $R$ constructed in the proof of the same lemma), and takes $1$ to $1$.

The conformal map $\psi_R:\D\to \mathcal{A}(R)$, that conjugates $B_d$ to $R$ and sends $1$ to $\infty$, carries the petal $\mathcal{P}$ to some petal in $\mathcal{A}(R)$. Note that $\psi_R$ can be factorized as the composition of a Fatou coordinate for $\mathcal{P}$ with the inverse of a Fatou coordinate for $\psi_R(\mathcal{P})$. It follows from the asymptotics of Fatou coordinates that near $1$, the curve $\partial\psi_R(\mathcal{P})$ is the union of two smooth arcs meeting at a positive angle. In particular, $\partial\psi_R(\mathcal{P})$ is a quasicircle. Hence, $\psi_R:\mathcal{P}\to\psi_R(\mathcal{P})$ extends to a global quasiconformal map $\widecheck{\psi}_R$. Therefore, the global quasiconformal map $\widecheck{\psi}_R\circ\mathfrak{g}$ carries $\mathfrak{T}$ onto $\overline{\psi_R(\mathcal{P})}$. Hence, the global quasiconformal map $\kappa\circ\widecheck{\psi}_R\circ\mathfrak{g}$ (where $\kappa$ is the global quasiconformal homeomorphism of Lemma~\ref{conf_mating_lem}) sends $\overline{\mathfrak{H}_1}$ onto $\widehat{\C}\setminus\cU$. We also note that $\psi_R$ and $\widecheck{\psi}_R$ agree on $\mathfrak{g}(\partial\mathfrak{H}_1)$.

The explicit asymptotic development of $F_d$ near $1$ furnished in Section~\ref{para_asymp_subsec} shows that $F_d\vert_{\partial\mathfrak{H}_1}$ is a quasisymmetry. Let us choose a Riemann uniformization $\phi:\D\to\widehat{\C}\setminus\overline{\mathfrak{H}_1}$ whose homeomorphic boundary extension sends $1$ to $1$. By \cite[Theorem~3.11]{Pom}, the map $\phi$ is of the form 
$$
\zeta\mapsto 1+c(\zeta-1)^2+o((\zeta-1)^2),\ \textrm{ for some } c\in\C^*,
$$ 
near $1$. It follows that $\phi^{-1}\circ F_d\circ\phi$ is a quasisymmetry on the unit circle. Combining this with the discussion of the previous paragraph and the construction of $R_F$, we conclude that there exists a quasiconformal map $\phi_1:=\kappa\circ\widecheck{\psi}_R\circ\mathfrak{g}\circ\phi:\D\to\cU$ whose homeomorphic boundary extension conjugates $R_F\vert_{\partial\cU}$ to an orientation-reversing quasisymmetric involution $\widecheck{J}:\mathbb{S}^1\to\mathbb{S}^1$ (see Figure~\ref{seqn_of_maps_fig}).
\begin{figure}[h!]
\captionsetup{width=0.96\linewidth}
\begin{tikzpicture}
\node[anchor=south west,inner sep=0] at (0,0) {\includegraphics[width=0.96\textwidth]{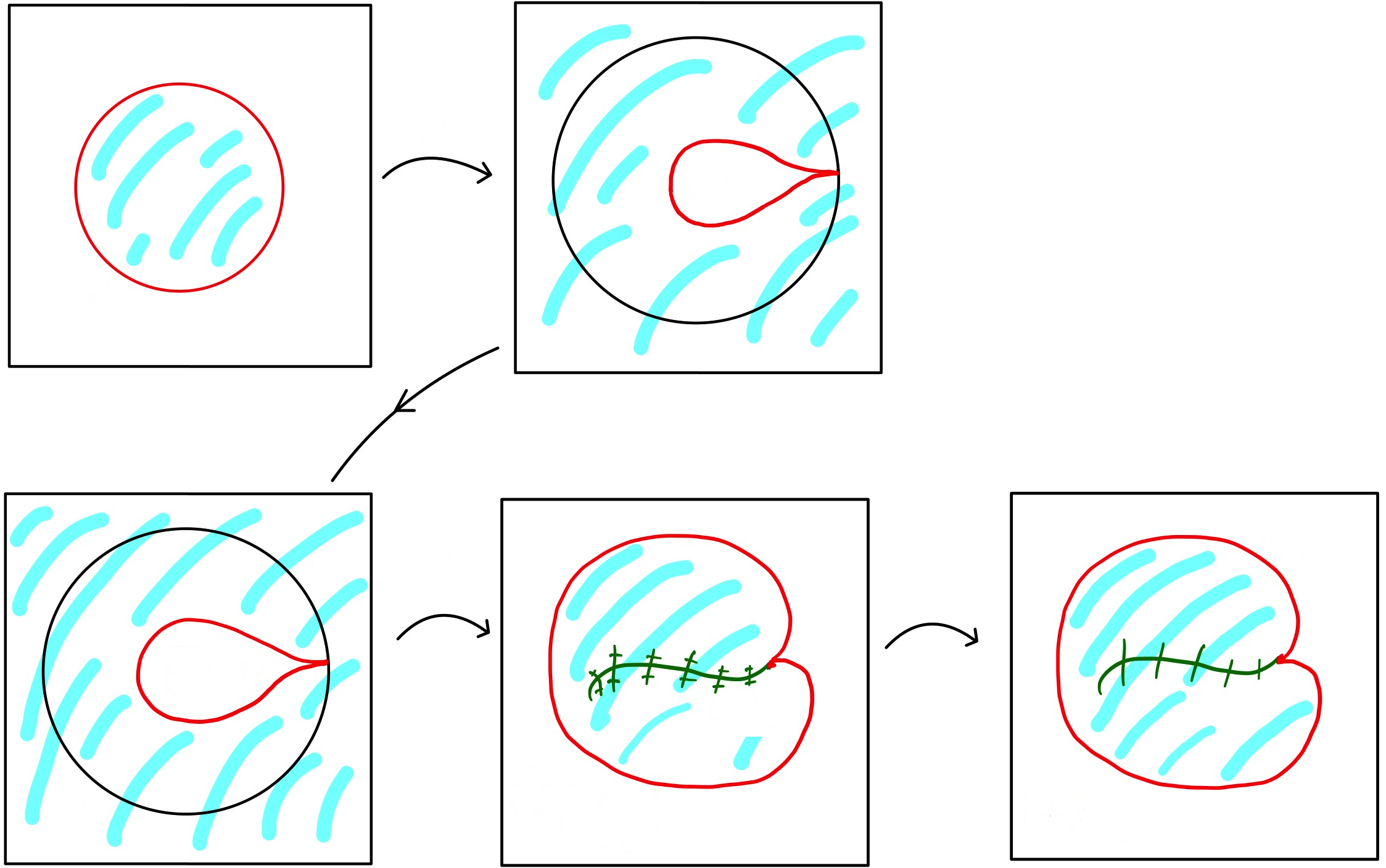}}; 
\node at (0.72,0.54) {$\mathbb{S}^1$};
\node at (1.6,5.92) {\begin{small}$\D$\end{small}};
\node at (0.9,5) {$\mathbb{S}^1$};
\node at (1.84,1.8) {\begin{small}$\mathfrak{g}(\mathfrak{H}_1)$\end{small}};
\node at (3.92,6.4) {$\phi$};
\node at (5.28,4.75) {$\mathbb{S}^1$};
\node at (6.36,6) {$\mathfrak{H}_1$};
\node at (3.84,4.5) {$\mathfrak{g}$};
\node at (3.92,2.56) {$\widecheck{\psi}_R$};
\node at (8.2,2.36) {$\kappa$};
\node at (5.4,0.4) {\begin{footnotesize}$\widecheck{\psi}_R(\mathfrak{g}(\mathfrak{H}_1))$\end{footnotesize}};
\node at (6.45,1.42) {\begin{tiny}$K(R)$\end{tiny}}; 
\node at (9.6,0.4) {\begin{footnotesize}$\widehat{\C}\setminus\overline{\cU}$\end{footnotesize}};
\node at (11.4,2.8) {$\partial\cU$};
\end{tikzpicture}
\caption{Illustrated is the definition of the quasiconformal map $\phi_1=\kappa\circ\widecheck{\psi}_R\circ\mathfrak{g}\circ\phi:\D\to\cU$.}
\label{seqn_of_maps_fig}
\end{figure}

Thus, the map $\widecheck{J}$ extends as a quasiconformal homeomorphism of $\widehat{\C}$ mapping $\D$ onto $\D^*=\widehat{\C}\setminus\overline{\D}$ with $\widecheck{J}^{\circ 2}=\mathrm{identity}$.

Define a quasiregular map 
\begin{align*}
\widecheck{Q}:\widehat{\C}\to\widehat{\C}, \hspace{1cm}
\widecheck{Q}=
\begin{cases}
\phi_1\quad \mathrm{on}\quad \overline{\D},\\
R_F\circ \phi_1\circ \widecheck{J}\quad \mathrm{on}\quad \D^*.
\end{cases}
\end{align*}

Set $\mu:=(\widecheck{Q})^*(\mu_0)$. Let $\pmb{\Phi}$ be a quasiconformal homeomorphism of $\widehat{\C}$ with $\pmb{\Phi}^*(\mu_0)=\mu$. Then $Q:=\widecheck{Q}\circ\pmb{\Phi}^{-1}$ is a quasiregular map of the Riemann sphere preserving the standard complex structure. Hence, $Q$ is rational.

By construction, $\widecheck{Q}\circ\widecheck{J}=R_F\circ\widecheck{Q}$ on $\D$. As $\mu_0$ is an $R_F$-invariant Beltrami coefficient, it follows that $\mu$ is $\widecheck{J}$-invariant. Hence, $\pmb{\Phi}\circ\widecheck{J}\circ\pmb{\Phi}^{-1}$ is a M{\"o}bius involution, and thus can be chosen to be $J(z)=1/z$.

We set $\mathfrak{D}:=\pmb{\Phi}(\D)$. Note that $Q$ carries $\overline{\mathfrak{D}}$ injectively onto $\overline{\cU}$. The fact that $\widecheck{J}$ preserves $\mathbb{S}^1$ implies that $\partial\mathfrak{D}$ is $J$-invariant.

By Theorem~\ref{farey_mating_thm}, the map $R_F\vert_{\partial\cU}$ is topologically conjugate to $F_d\vert_{\partial\mathfrak{H}_1}$, and hence has two fixed points (one of which is $\pmb{s}$). As $J\vert_{\partial\mathfrak{D}}$ is conjugate to $R_F\vert_{\partial\cU}$, it now follows that $\partial\mathfrak{D}$ contains the fixed points $\pm 1$ of $J$. After possibly pre-composing $Q$ with $z\mapsto -z$, we can assume that $Q(1)=\pmb{s}$.

We will now show that $R_F= Q\circ J\circ(Q\vert_{\overline{\mathfrak{D}}})^{-1}$.
To this end, let us assume that $z\in\overline{\mathfrak{D}}$. Then, 
$$
R_F(Q(z))=R_F(\widecheck{Q}(\pmb{\Phi}^{-1}(z)))= \widecheck{Q}(\widecheck{J}(\pmb{\Phi}^{-1}(z))).
$$ 
On the other hand, 
$$
Q(J(z))=\widecheck{Q}(\pmb{\Phi}^{-1}(\pmb{\Phi}(\widecheck{J}(\pmb{\Phi}^{-1}(z)))))=\widecheck{Q}(\widecheck{J}(\pmb{\Phi}^{-1}(z))).
$$
Hence, $R_F\circ Q= Q\circ J$ on $\overline{\mathfrak{D}}$.

The conformal conjugacy between $R_F$ and $F_d$ (on appropriate sets) shows that $R_F$ admits local holomorphic extensions around all points of $\partial\cU\setminus\{\pmb{s}\}$, but does not admit such an extension around $\pmb{s}$. The description of $R_F$ in terms of $Q$ and $J$ now implies that $1$ is the unique critical point of $Q$ on $\partial\mathfrak{D}$. Since $\partial\cU\setminus\{\pmb{s}\}$ is a non-singular real-analytic curve, we conclude that $\partial\mathfrak{D}\setminus\{1\}$ is also a non-singular real-analytic curve.

Finally, we need to argue that $Q$ can be chosen to be a polynomial. By construction, $\widehat{\C}\setminus K(R_F)$ is a simply connected domain containing exactly one critical value $v_0$ of $R_F$ such that $v_0\in\widehat{\C}\setminus\overline{\cU}$. Moreover, $R_F^{-1}(v_0)$ is a singleton that is mapped to $v_0$ by $R_F$ with local degree $d+1$ (recall that the map $F_d$ has a unique critical point of multiplicity $d$ with associated critical value $0\in\Int{\mathfrak{H}_1}$).
It now follows from the above description of $R_F$ that $Q^{-1}(v_0)$ is also a singleton. We set $Q^{-1}(v_0)=\{c_0\}$, and note that $c_0\in\mathfrak{D}^*:=\widehat{\C}\setminus\overline{\mathfrak{D}}$. After possibly post-composing $Q$ with a M{\"o}bius map and pre-composing it with a M{\"o}bius map that commutes with $J$, we can assume that $c_0=v_0=\infty$. With these normalizations, the map $Q$ is a degree $d+1$ polynomial.
\end{proof}

\begin{remark}
We refer the reader to \cite[\S 14]{LLM24} for a different proof of Lemma~\ref{conf_mating_description_lem} using a conformal welding construction.
\end{remark}

\subsubsection{Dynamics of B-involutions and associated correspondences}\label{gen_const_subsec}

Let $Q$ be a rational map of degree $(d+1)$ and $\mathfrak{D}$ a Jordan disc satisfying the conditions of Definition~\ref{b_inv_def}.

We recall the notation $\mathfrak{S}:=Q(\mathfrak{P})\in\partial\cU$, and $S:=Q\circ J\circ\left(Q\vert_{\overline{\mathfrak{D}}}\right)^{-1}:\overline{\cU}\to\widehat{\C}.$

Note that $\partial\cU\setminus\mathfrak{S}$ is a union of non-singular real-analytic curves. 
We define the \emph{fundamental tile}
$$
T^0(S):=\widehat{\C}\setminus\left(\cU\cup\mathfrak{S}\right)
$$ 
and the \emph{escaping/tiling set} 
$$
T^\infty(S):=\bigcup_{n\geq 0} S^{-n}(T^0(S)).
$$
For any $n\geq0$, the connected components of $S^{-n}(T^0(S))$ are called \emph{tiles} of rank $n$. Two distinct tiles have disjoint interior. Further, the boundary of the rank zero tile (namely, $T^0(S)$) in $T^\infty(S)$ is contained in the boundary of the rank one tiles.

The \emph{non-escaping set} $K(S)$ is defined as the complement of the tiling set in $\widehat{\C}$. We further set 
$$
\Omega:=Q^{-1}(T^\infty(S)),\ \mathrm{and}\ \cK:=Q^{-1}(K(S)).
$$ 
The common boundary of the non-escaping set $K(S)$ and the tiling set $T^\infty(S)$ is called the \emph{limit set} of $S$, denoted by $\Lambda(S)$.
Finally, recall the notation $\mathfrak{D}^*=\widehat{\C}\setminus\overline{\mathfrak{D}}$.
 
\begin{proposition}\label{tiling_open_prop}
The tiling set $T^\infty(S)$ is an open set. The non-escaping set $K(S)$ is closed.
\end{proposition}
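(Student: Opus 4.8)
The plan is to prove that $T^\infty(S)$ is open; the assertion that $K(S)$ is closed is then immediate, since $K(S)=\widehat{\C}\setminus T^\infty(S)$. I would begin by recording three elementary observations. First, because $\cU$ is a Jordan domain and $\mathfrak{S}\subseteq\partial\cU$, the fundamental tile splits as $T^0(S)=\Int{\cU^c}\cup(\partial\cU\setminus\mathfrak{S})$. Second, directly from the definition one has $T^\infty(S)=T^0(S)\cup S^{-1}(T^\infty(S))$ and $S^{-1}(T^\infty(S))\subseteq T^\infty(S)$ (if $z\in\overline{\cU}$ and $S(z)\in T^\infty(S)$, then some forward iterate of $z$ lands in $T^0(S)$, so $z\in T^\infty(S)$). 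Third, $\mathfrak{S}$ is a finite set which is forward invariant under $S$: indeed $S|_{\partial\cU}$ is a self-involution of $\partial\cU$, and since $J(\mathfrak{P})=\mathfrak{P}$ and $Q|_{\overline{\mathfrak{D}}}$ is injective one gets $S(\mathfrak{S})=\mathfrak{S}$ for $\mathfrak{S}=Q(\mathfrak{P})$; consequently no point of $\mathfrak{S}$ ever enters $T^0(S)$, so $\mathfrak{S}\subseteq K(S)$.

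The one analytic ingredient, and the heart of the matter, is the behaviour of $S$ near the boundary points $\partial\cU\setminus\mathfrak{S}$, where $S$ extends to a conformal involution on a neighbourhood that carries the $\cU$-side to the exterior $\Int{\cU^c}$. I would use this as follows: given $p\in\partial\cU\setminus\mathfrak{S}$, pick such a neighbourhood $N_p$ and shrink it so that $N_p\cap\mathfrak{S}=\emptyset$ (possible since $\mathfrak{S}$ is finite). Then $N_p\cap\Int{\cU^c}$ and $N_p\cap\partial\cU\ (\subseteq\partial\cU\setminus\mathfrak{S})$ lie in $T^0(S)$, while each $w\in N_p\cap\cU$ satisfies $S(w)\in\Int{\cU^c}\subseteq T^0(S)$, hence $w\in S^{-1}(T^0(S))\subseteq T^\infty(S)$. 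Therefore $N_p\subseteq T^\infty(S)$.

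It then remains to run an induction on the first-landing time $\nu(z):=\min\{n\geq 0: S^{\circ n}(z)\ \text{is defined and lies in}\ T^0(S)\}$, showing that every $z\in T^\infty(S)$ has a neighbourhood contained in $T^\infty(S)$. If $\nu(z)=0$, then $z\in T^0(S)$: either $z\in\Int{\cU^c}$, an interior point of $T^\infty(S)$, or $z\in\partial\cU\setminus\mathfrak{S}$, handled by the previous paragraph. If $\nu(z)=n\geq 1$, then $z\notin T^0(S)$, and also $z\notin\mathfrak{S}$ because $\mathfrak{S}\subseteq K(S)$; since $z\in\overline{\cU}$ this forces $z\in\cU$. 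Moreover $S(z)\in T^\infty(S)$ with $\nu(S(z))=n-1$, so by the inductive hypothesis there is a neighbourhood $W$ of $S(z)$ with $W\subseteq T^\infty(S)$; as $S$ is continuous on the open set $\cU$, there is a neighbourhood $N\subseteq\cU$ of $z$ with $S(N)\subseteq W$, and hence $N\subseteq S^{-1}(T^\infty(S))\subseteq T^\infty(S)$. This proves openness of $T^\infty(S)$.

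I expect the only real subtlety to be the boundary step: away from $\partial\cU$ the argument is purely topological (preimages of open sets under the continuous map $S$), and the lone place where a point of $T^\infty(S)$ could a priori fail to be interior is on $\partial\cU$, which is exactly where one must invoke the conformal extension of $S$ across $\partial\cU\setminus\mathfrak{S}$ and its side-swapping property, together with the finiteness of the exceptional set $\mathfrak{S}$.
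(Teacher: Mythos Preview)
Your proof is correct and follows essentially the same approach as the paper's: both show openness by verifying that boundary points of rank $k$ tiles are interior to the union of tiles up to rank $k+1$, with the paper packaging this as $T^\infty(S)=\bigcup_k \Int{E^k}$ and you packaging it as an induction on first-landing time. Your write-up is in fact more explicit about the key analytic ingredient (the side-swapping conformal extension of $S$ across $\partial\cU\setminus\mathfrak{S}$ and the forward invariance of $\mathfrak{S}$), which the paper's proof uses implicitly when it asserts that the boundary of the rank zero tile is contained in the boundary of the rank one tiles.
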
 
\begin{proof}
Let $E^k$ stand for the union of the tiles of rank $0$ through $k$. 
By definition, if $z\in T^\infty(S)$ belongs to the interior of a rank $k$ tile, then $z\in\Int{E^k}$. We claim that if $z\in T^\infty(S)$ belongs to the boundary of a rank $k$ tile, then $z$ lies in $\Int{E^{k+1}}$. Indeed, as the boundary of the rank zero tile (in $T^\infty(S)$) is contained in the boundary of the rank one tiles, it follows that the outer boundary of the rank $k$ tiles (viewed from $T^0(S)$) is contained in the inner boundary of the rank $k+1$ tiles.
Hence, 
$$
T^\infty(S)=\bigcup_{k\geq 0}\Int{E^k};
$$
i.e., $T^\infty(S)$ is an increasing union of open sets.
\end{proof}

We define the $d:d$ holomorphic correspondence $\G \subset\widehat{\C}\times\widehat{\C}$ as 
\begin{equation}
(z,w)\in\G\iff \frac{Q(w)-Q(J(z))}{w-J(z)}=0.
\label{corr_eqn}
\end{equation}

The next result shows that the correspondence $\G$ is obtained by lifting $S$ and its backward branches.
\begin{proposition}\label{inverse_lift_corr_holo}
\noindent 1) For $z\in\overline{\mathfrak{D}}$, we have that $(z,w)\in\G \iff Q(w)=S(Q(z)), w\neq J(z)$. 
\smallskip

2) For $z\in\mathfrak{D}^*$, we have that $(z,w)\in\G\ \implies\ S(Q(w))=Q(z), w\neq J(z)$.
\end{proposition}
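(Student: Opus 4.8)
The plan is to read both assertions directly off the defining identity
$$
S=Q\circ J\circ\bigl(Q\vert_{\overline{\mathfrak{D}}}\bigr)^{-1}\colon\overline{\cU}\to\widehat{\C}
$$
of the B-involution, substituting it into the equation defining $\G$ and keeping careful track of which of the two complementary Jordan domains $\mathfrak{D},\mathfrak{D}^*$ of $\partial\mathfrak{D}$ each point lies in. Before doing this I would record one preliminary fact: $J$ interchanges $\mathfrak{D}$ and $\mathfrak{D}^*$. Indeed, an involution of $\widehat{\C}$ fixing the Jordan curve $\partial\mathfrak{D}$ setwise either fixes or swaps its two complementary domains, and it must swap them here because, by the remarks following Definition~\ref{b_inv_def}, $S$ carries points of $\cU$ near $\partial\cU\setminus\mathfrak{S}$ to the exterior of $\cU$ (equivalently, this is visible from the welding construction in the proof of Lemma~\ref{conf_mating_description_lem}). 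I also record that $(z,w)\in\G$ means precisely that $Q(w)=Q(J(z))$ with $w\neq J(z)$, the diagonal component having been removed in passing from $Cov^Q$ to $Cov_0^Q$.

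For part~1 I would fix $z\in\overline{\mathfrak{D}}$. Then $Q(z)\in Q(\overline{\mathfrak{D}})=\overline{\cU}$ lies in the domain of $S$, and injectivity of $Q\vert_{\overline{\mathfrak{D}}}$ gives $\bigl(Q\vert_{\overline{\mathfrak{D}}}\bigr)^{-1}(Q(z))=z$, hence $S(Q(z))=Q(J(z))$. Since this identity lets me replace $Q(J(z))$ by $S(Q(z))$ verbatim, it follows at once that
$$
(z,w)\in\G\iff \bigl(Q(w)=Q(J(z))\ \text{and}\ w\neq J(z)\bigr)\iff\bigl(Q(w)=S(Q(z))\ \text{and}\ w\neq J(z)\bigr),
$$
which is the claimed equivalence.

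For part~2 I would fix $z\in\mathfrak{D}^*$, so that $J(z)\in\mathfrak{D}$ by the swap recorded above, and suppose $(z,w)\in\G$. Then $Q(w)=Q(J(z))\in Q(\mathfrak{D})=\cU\subset\overline{\cU}$, so $S$ is defined at $Q(w)$; and again using injectivity of $Q\vert_{\overline{\mathfrak{D}}}$ on $J(z)\in\overline{\mathfrak{D}}$ together with $J\circ J=\mathrm{id}$,
$$
S(Q(w))=S(Q(J(z)))=Q\Bigl(J\bigl(\bigl(Q\vert_{\overline{\mathfrak{D}}}\bigr)^{-1}(Q(J(z)))\bigr)\Bigr)=Q(J(J(z)))=Q(z),
$$
and $w\neq J(z)$ by definition of $\G$. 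That this direction yields only an implication (and not the full equivalence of part~1) reflects the asymmetry of a B-involution: $S\colon S^{-1}(\Int{\cU^c})\to\Int{\cU^c}$ has degree $d+1$ while $\G$ is a $d:d$ correspondence, so $\G$ cannot see every branch of $S^{-1}$ over points of $\mathfrak{D}^*$.

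I do not expect a real obstacle here — the entire argument is the substitution of the formula for $S$ into the defining equation of $\G$, and the only thing demanding attention is the domain bookkeeping (ensuring $S$ is evaluated only on $\overline{\cU}$), which is exactly why the cases $z\in\overline{\mathfrak{D}}$ and $z\in\mathfrak{D}^*$ are separated and why the swap $J(\mathfrak{D})=\mathfrak{D}^*$ is isolated first. The genuinely substantive input — that $R_F$ itself is a B-involution — is already in hand from Lemma~\ref{conf_mating_description_lem}.
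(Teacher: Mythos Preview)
Your proof is correct and follows essentially the same approach as the paper's: both substitute the defining relation $S(Q(z))=Q(J(z))$ for $z\in\overline{\mathfrak{D}}$ directly into the correspondence equation, and for part~2 both apply $S$ to the identity $Q(w)=Q(J(z))$ after noting $J(z)\in\overline{\mathfrak{D}}$. Your version is simply more explicit about the domain bookkeeping (the swap $J(\mathfrak{D})=\mathfrak{D}^*$ and why $S$ is defined at the relevant points), which the paper leaves implicit.
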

\begin{proof}
Let us first consider $z\in\overline{\mathfrak{D}}$. For such $z$, we have $S(Q(z))=Q(J(z))$. Hence, for $z\in\overline{\mathfrak{D}}$, 
$$
(z,w)\in\G \iff Q(w)=Q(J(z))=S(Q(z)), w\neq J(z).
$$ 

Now let $z\in\mathfrak{D}^*$. For such $z$, we have that $S(Q(J(z)))=Q(z)$. Therefore, for $z\in\mathfrak{D}^*$, 
\begin{align*}
(z,w)\in\G & \iff Q(w)=Q(J(z)), w\neq J(z)\\
& \implies S(Q(w))=S(Q(J(z)))=Q(z), w\neq J(z).
\end{align*}
\end{proof}

The tiles of rank $n$ in $\Omega$ are defined as $Q$-preimages of tiles of rank $n$ in $T^\infty(S)$. If $Q$ has no critical value in a simply connected rank $n$ tile (of $T^\infty(S)$), then it lifts to $(d+1)$ rank $n$ tiles in $\Omega$ (each of which is mapped univalently under $Q$). 

\begin{figure}[h!]
\captionsetup{width=0.96\linewidth}
\begin{tikzpicture}
\node[anchor=south west,inner sep=0] at (0,0) {\includegraphics[width=0.8\textwidth]{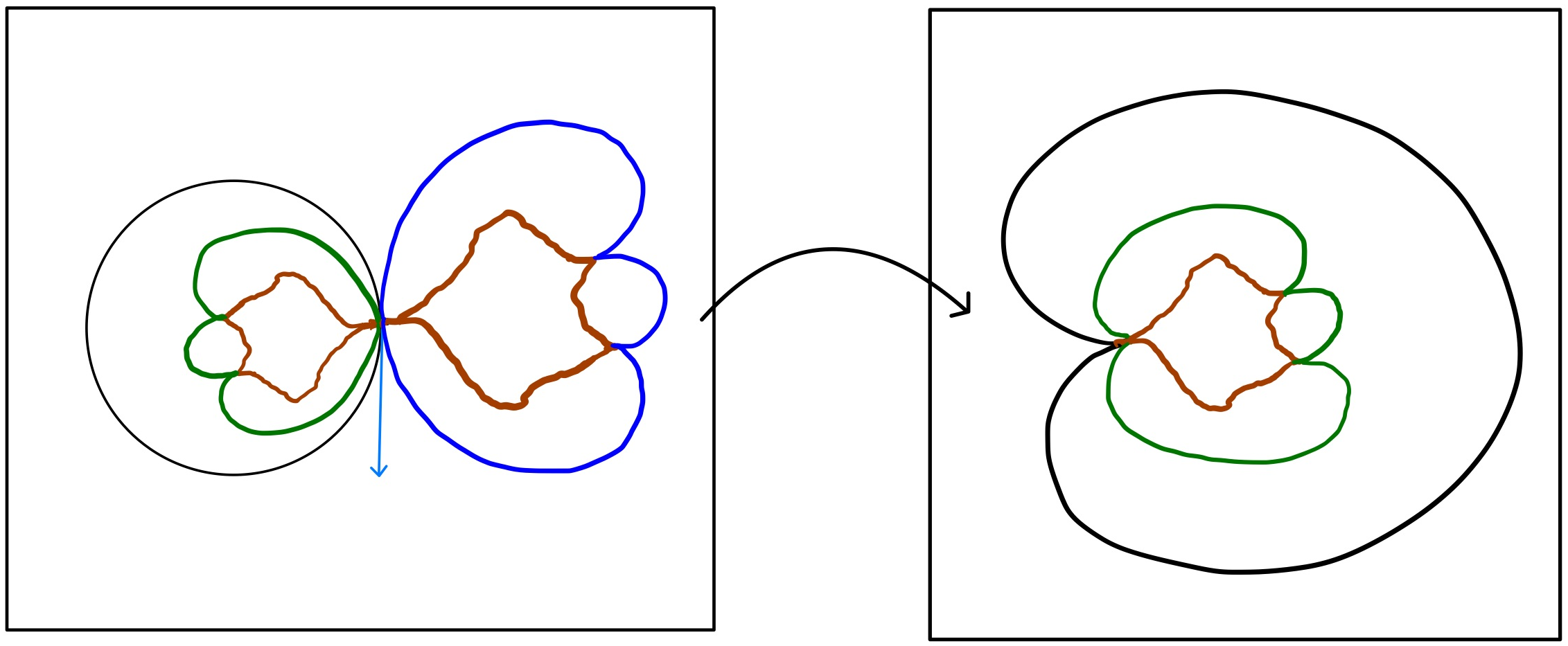}}; 
\node at (0.72,1.1) {$\partial\mathfrak{D}$};
\node at (2.45,0.9) {$1$};
\node at (1.2,3.6) {\begin{footnotesize}$Q^{-1}(T^0(S))$\end{footnotesize}};
\node at (5.36,2.8) {$Q$};
\node at (7,0.45) {$\partial\cU$};
\node at (8.25,0.96) {\begin{footnotesize}$S^{-1}(T^0(S))$\end{footnotesize}};
\node at (9.2,3.75) {$T^0(S)$};
\node at (1.92,2) {\begin{footnotesize}$\cK_+$\end{footnotesize}};
\node at (3.36,2.2) {\begin{footnotesize}$\cK_-$\end{footnotesize}};
\node at (7.84,2) {\begin{tiny}$K(S)$\end{tiny}};
\end{tikzpicture}
\caption{The map $Q$ mediates between the $S-$plane and the $\G-$plane.}
\label{two_planes_fig}
\end{figure}

\begin{proposition}\label{correspondence_partition_holo}
1) Each of the sets $\Omega$ and $\cK$ is completely invariant under the correspondence $\G$. More precisely, if $(z,w)\in\G$, then $$z\in\Omega\iff w\in\Omega,$$ and $$z\in\cK\iff w\in\cK.$$

2) $J(\Omega)=\Omega$, and $J(\cK)=\cK$.
\end{proposition}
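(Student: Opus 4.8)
The plan is to deduce both assertions from a single invariance property of the partially defined B-involution $S$, namely: \emph{for every $\zeta$ in the domain $\overline{\cU}$ of $S$ one has $\zeta\in T^\infty(S)$ if and only if $S(\zeta)\in T^\infty(S)$} (equivalently, $\zeta\in K(S)$ iff $S(\zeta)\in K(S)$); call this the Crux. First I would prove it. The implication ``$S(\zeta)\in T^\infty(S)\Rightarrow\zeta\in T^\infty(S)$'' is formal: if $S(\zeta)\in S^{-n}(T^0(S))$ then $\zeta\in S^{-(n+1)}(T^0(S))\subset T^\infty(S)$. For the converse, write $\zeta\in S^{-n}(T^0(S))$; if $n\ge 1$ then $S(\zeta)\in S^{-(n-1)}(T^0(S))\subset T^\infty(S)$, while if $n=0$ then, since $\zeta\in\overline{\cU}$ and $T^0(S)=\widehat{\C}\setminus(\cU\cup\mathfrak{S})$, necessarily $\zeta\in\partial\cU\setminus\mathfrak{S}$, and then $S(\zeta)\in\partial\cU\setminus\mathfrak{S}\subset T^0(S)$ because $S$ restricts to a self-involution of $\partial\cU\setminus\mathfrak{S}$ (here $S(\mathfrak{S})=\mathfrak{S}$, which follows from $J(\mathfrak{P})=\mathfrak{P}$). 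I expect this ``$n=0$'' case to be the only delicate point of the whole argument: it is where the genuine B-involution behaviour of $S$ on its boundary circle is used, and one must keep track of the fact that $\mathfrak{S}$ is excluded both from $\cU$ and from the fundamental tile $T^0(S)$.

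Next I would assemble the bookkeeping linking $\G$, $J$ and $S$. From $S=Q\circ J\circ(Q\vert_{\overline{\mathfrak{D}}})^{-1}$ and the injectivity of $Q\vert_{\overline{\mathfrak{D}}}$ one gets $S(Q(z))=Q(J(z))$ for $z\in\overline{\mathfrak{D}}$, and $S(Q(J(z)))=Q(z)$ for $z\in\mathfrak{D}^{*}$ (since then $J(z)\in\mathfrak{D}$, so $Q(J(z))\in\overline{\cU}$ and $S$ is defined there). By Proposition~\ref{inverse_lift_corr_holo}, $(z,w)\in\G$ forces $Q(w)=S(Q(z))$ when $z\in\overline{\mathfrak{D}}$ and $S(Q(w))=Q(z)$ when $z\in\mathfrak{D}^{*}$. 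Finally, because $T^\infty(S)$ and $K(S)$ partition $\widehat{\C}$, the sets $\Omega=Q^{-1}(T^\infty(S))$ and $\cK=Q^{-1}(K(S))$ are complementary in $\widehat{\C}$, so it suffices to argue everywhere with $\Omega$, the statements for $\cK$ following by complementation.

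With the Crux and this dictionary in place, both parts follow from the dichotomy $\widehat{\C}=\overline{\mathfrak{D}}\sqcup\mathfrak{D}^{*}$. For part (1): given $(z,w)\in\G$, if $z\in\overline{\mathfrak{D}}$ apply the Crux with $\zeta=Q(z)$ and $S(\zeta)=Q(w)$, and if $z\in\mathfrak{D}^{*}$ apply it with $\zeta=Q(w)$ and $S(\zeta)=Q(z)$; in either case $z\in\Omega\iff w\in\Omega$, hence $z\in\cK\iff w\in\cK$. For part (2): given $z\in\Omega$, if $z\in\overline{\mathfrak{D}}$ then $Q(J(z))=S(Q(z))$ and the Crux yields $Q(J(z))\in T^\infty(S)$; if $z\in\mathfrak{D}^{*}$ then $J(z)\in\overline{\mathfrak{D}}$ and $S(Q(J(z)))=Q(z)\in T^\infty(S)$, so the Crux with $\zeta=Q(J(z))$ again yields $Q(J(z))\in T^\infty(S)$; thus $J(z)\in\Omega$ in all cases, so $J(\Omega)\subset\Omega$, which upgrades to $J(\Omega)=\Omega$ because $J$ is an involution, and then $J(\cK)=\cK$ by complementation. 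Aside from the boundary subtlety flagged above, the remaining steps — verifying $S(Q(z))=Q(J(z))$ and dealing with the overlap $\partial\mathfrak{D}$ — are routine.
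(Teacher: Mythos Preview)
Your argument is correct and is precisely the detailed unpacking that the paper's one-line proof (``immediate from the definitions of the sets, cf.~[MM23, Proposition~5.1]'') gestures at. The only point worth flagging is that in Part~(1), case $z\in\mathfrak{D}^{*}$, you should make explicit that $Q(w)\in\overline{\cU}$ before invoking the Crux with $\zeta=Q(w)$; this follows because the defining equation of $\G$ gives $Q(w)=Q(J(z))$ with $J(z)\in\mathfrak{D}$, hence $Q(w)\in\cU$.
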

\begin{proof}
This is immediate from the definitions of the sets (cf. \cite[Proposition~5.1]{MM23}).
\end{proof}

Let us set $\cK_+:=\cK\cap\overline{\mathfrak{D}}$ and $\cK_-:=\cK\setminus\mathfrak{D}$. 

\begin{lemma}\label{lifted_ne_top_lem}
\noindent\begin{enumerate}
\item $\cK_-\cap\partial\mathfrak{D}=\cK_+\cap\partial\mathfrak{D}=\cK_-\cap\cK_+=\mathfrak{P}$.

\item $\cK_+=J(\cK_-)$.

\item $Q$ carries $\cK_+$ (respectively, $\cK_-$) homeomorphically (respectively, as a degree $d$ branched cover) onto $K(S)$.
\end{enumerate}
\end{lemma}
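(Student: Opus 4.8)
The plan is to derive all three assertions from two facts on the $S$-side together with elementary topology of the degree-$(d+1)$ branched cover $Q$. The two facts are: (i) $K(S)\subseteq\overline{\cU}$ with $K(S)\cap\partial\cU=\mathfrak{S}$; and (ii) $Q|_{\overline{\mathfrak{D}}}$ is a homeomorphism onto $\overline{\cU}$ with $Q(\partial\mathfrak{D})=\partial\cU$ and $Q(\mathfrak{P})=\mathfrak{S}$ (part of the B-involution setup), while $J(\mathfrak{D})=\mathfrak{D}^*$. For (i): $T^0(S)=\widehat{\C}\setminus(\cU\cup\mathfrak{S})$ is itself a tile, so $K(S)=\widehat{\C}\setminus T^\infty(S)\subseteq\cU\cup\mathfrak{S}$, giving $K(S)\cap\partial\cU\subseteq\mathfrak{S}$; conversely $S(\mathfrak{S})=\mathfrak{S}$ — because $(Q|_{\overline{\mathfrak{D}}})^{-1}(\mathfrak{S})=\mathfrak{P}$, $J(\mathfrak{P})=\mathfrak{P}$ and $Q(\mathfrak{P})=\mathfrak{S}$ — and $\mathfrak{S}\cap T^0(S)=\emptyset$, so $\mathfrak{S}$ never enters a tile, whence $\mathfrak{S}\subseteq K(S)$. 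For $J(\mathfrak{D})=\mathfrak{D}^*$: $J$ permutes the two complementary Jordan domains of the $J$-invariant curve $\partial\mathfrak{D}$, and it cannot fix $\mathfrak{D}$, for then $J|_{\mathfrak{D}}$ would be an order-two conformal automorphism of a disc with an interior fixed point, whereas $\mathrm{Fix}(J)=\{\pm1\}\subset\partial\mathfrak{D}$.

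For assertion (1): if $z\in\partial\mathfrak{D}$ then $Q(z)\in\partial\cU$, so $z\in\cK\iff Q(z)\in K(S)\cap\partial\cU=\mathfrak{S}\iff z\in\mathfrak{P}$, the last step by injectivity of $Q|_{\partial\mathfrak{D}}$ and $Q(\mathfrak{P})=\mathfrak{S}$; thus $\cK\cap\partial\mathfrak{D}=\mathfrak{P}$. Since $\mathfrak{P}\subseteq\partial\mathfrak{D}$ is disjoint from $\mathfrak{D}$ and contained in $\overline{\mathfrak{D}}$, this yields at once $\cK_+\cap\partial\mathfrak{D}=\cK\cap\partial\mathfrak{D}=\mathfrak{P}$, $\cK_-\cap\partial\mathfrak{D}=\cK\cap\partial\mathfrak{D}=\mathfrak{P}$, and $\cK_+\cap\cK_-=\cK\cap(\overline{\mathfrak{D}}\setminus\mathfrak{D})=\cK\cap\partial\mathfrak{D}=\mathfrak{P}$. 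For assertion (2): since $J(\widehat{\C}\setminus\mathfrak{D})=J(\mathfrak{D}^*\cup\partial\mathfrak{D})=\mathfrak{D}\cup\partial\mathfrak{D}=\overline{\mathfrak{D}}$ and $J(\cK)=\cK$ by Proposition~\ref{correspondence_partition_holo}, I get $J(\cK_-)=J(\cK\cap(\widehat{\C}\setminus\mathfrak{D}))=\cK\cap\overline{\mathfrak{D}}=\cK_+$.

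For assertion (3), $\cK_+$: since $K(S)\subseteq\overline{\cU}=Q(\overline{\mathfrak{D}})$ we have $\cK_+=\cK\cap\overline{\mathfrak{D}}=(Q|_{\overline{\mathfrak{D}}})^{-1}(K(S))$, and $Q|_{\overline{\mathfrak{D}}}$ being a homeomorphism onto $\overline{\cU}$, $Q$ restricts to a homeomorphism $\cK_+\to K(S)$. For $\cK_-$: from the definition of $S$, $Q\circ J=S\circ Q$ on $\overline{\mathfrak{D}}$, so by assertion (2) $Q(\cK_-)=Q(J(\cK_+))=S(Q(\cK_+))=S(K(S))$; a direct count then identifies the map. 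Concretely, a point $w\in K(S)$ outside the finite set $\mathfrak{S}\cup\{\text{critical values of }Q\}$ lies in $\cU$, hence has $d+1$ distinct preimages, exactly one in $\mathfrak{D}$ (none on $\partial\mathfrak{D}$, since $w\notin\partial\cU$) and therefore exactly $d$ in $\mathfrak{D}^*\subseteq\widehat{\C}\setminus\mathfrak{D}$; all $d$ of the latter lie in $Q^{-1}(K(S))\setminus\mathfrak{D}=\cK_-$ and exhaust the $\cK_-$-preimages of $w$. Surjectivity onto $K(S)$ is immediate on $\cU$ (take any of the $d\ge 1$ preimages just described) and on $\mathfrak{S}$ (use $\mathfrak{P}\subseteq\cK_-$ from assertion (1)). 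Thus $Q|_{\cK_-}\colon\cK_-\to K(S)$ is a proper, finite-to-one holomorphic map that is generically $d$-to-$1$, i.e. a branched covering of degree $d$; moreover it is conjugate, via the homeomorphisms $Q|_{\cK_+}$ and $J|_{\cK_+}$, to the self-map $S|_{K(S)}\colon K(S)\to K(S)$.

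I expect the only genuinely non-formal point — and the main obstacle — to be the behaviour over the finitely many pinch points $\mathfrak{S}$ (equivalently over $\mathfrak{P}\subseteq\cK_-\cap\partial\mathfrak{D}$, where $Q$ is critical): one must verify that $S$ maps $K(S)$ onto $K(S)$ and that $Q|_{\cK_-}$ is an honest branched covering there, not merely generically $d$-to-$1$. The former follows because forward images of non-escaping points are non-escaping and, for $w\in K(S)\cap\cU$, any $S$-preimage of $w$ in $S^{-1}(\cU)\subseteq\cU$ is again non-escaping while $\mathfrak{S}$ is $S$-invariant; the latter reduces to a local computation at each $p\in\mathfrak{P}$. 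Everything else is bookkeeping with the inclusions recorded in the first paragraph.
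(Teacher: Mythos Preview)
Your proof is correct and follows essentially the same route as the paper: identify $K(S)\cap\partial\cU=\mathfrak{S}$, pull back through the homeomorphism $Q\vert_{\overline{\mathfrak{D}}}$ to get $\cK\cap\partial\mathfrak{D}=\mathfrak{P}$, then use $J$-invariance of $\cK$ and of $\partial\mathfrak{D}$ for part (2), and the global degree count for part (3). The paper's proof is terser---it simply asserts ``by construction'' that $\partial\cU$ meets $K(S)$ exactly at $\mathfrak{S}$ and that $Q$, being a degree $d+1$ global branched cover, restricts to a degree $d$ branched cover $\cK_-\to K(S)$---whereas you spell out both of these (including the fixed-point argument for $J(\mathfrak{D})=\mathfrak{D}^*$ and the preimage count over generic $w\in K(S)$). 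Your detour through $Q(\cK_-)=S(K(S))$ and the closing paragraph's worry about surjectivity of $S$ on $K(S)$ are unnecessary: surjectivity of $Q\vert_{\cK_-}$ onto $K(S)$ follows directly from the degree count you already gave, without needing to know that $S(K(S))=K(S)$.
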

\begin{proof}
1) By definition, $\cK_\pm\cap\partial\mathfrak{D}=\{z\in\partial\mathfrak{D}: Q(z)\in K(S)\}$. By construction, $\partial\cU=Q(\partial\mathfrak{D})$ meets $K(S)$ precisely at the finite set $\mathfrak{S}$. Hence, $\cK_\pm\cap\partial\mathfrak{D}=(Q\vert_{\partial\mathfrak{D}})^{-1}(\mathfrak{S})=\mathfrak{P}$. Since $\cK_+\cap\cK_-\subset\partial\mathfrak{D}$, it now follows that $\cK_+\cap\cK_- =\mathfrak{P}$.

2) The $J-$invariance of $\cK$ implies that $J(\cK\cap\mathfrak{D})=\cK\setminus\overline{\mathfrak{D}}$. By hypothesis, $J(\mathfrak{P})=\mathfrak{P}$. The result now follows from these facts and the description of $\cK_\pm\cap\partial\mathfrak{D}$ given in the previous part.

3) As $Q$ is a homeomorphism from $\overline{\mathfrak{D}}$ onto $\overline{\cU}$ and $K(S)\subset\overline{\cU}$, it follows that $\cK_+=Q^{-1}(K(S))\cap\overline{\mathfrak{D}}=(Q\vert_{\overline{\mathfrak{D}}})^{-1}(K(S))$. Hence, $Q$ carries $\cK_+$ homeomorphically onto $K(S)$. Since $Q$ is a global branched covering of degree $d+1$, it now follows that it maps $\cK_-=Q^{-1}(K(S))\setminus\mathfrak{D}$ as a degree $d$ branched cover onto $K(S)$.
\end{proof}

\begin{proposition}\label{basic_dynamics_corr_prop_holo}
1) $\cK_-$ is forward invariant, and hence, $\cK_+$ is backward invariant under $\G$.

2) $\G$ has a forward branch carrying $\cK_+$ onto itself with degree $d$, and this branch is conformally conjugate to $S:K(S)\to K(S)$.

The remaining forward branches of $\G$ on $\cK$ carry $\cK_+$ onto $\cK_-$.

3) $\G$ has a backward branch carrying $\cK_-$ onto itself with degree $d$, and this branch is conformally conjugate to $S:K(S)\to K(S)$. 
\end{proposition}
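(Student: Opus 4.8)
The plan is to reduce the three statements to two facts already at hand: first, by Proposition~\ref{inverse_lift_corr_holo}(1), over $\overline{\mathfrak{D}}$ the correspondence $\G$ is literally the $Q$-lift of $S$, while over $\mathfrak{D}^*$ it is (the deleted covering correspondence of) $Q$ evaluated at $J(z)$; second, $\G$ is conjugate to $\G^{-1}$ via $J$. The second fact is an immediate calculation from \eqref{corr_eqn}: since $J\circ J=\mathrm{id}$ and the quotient is unchanged when numerator and denominator both change sign, $(z,w)\in\G\iff(J(w),J(z))\in\G$, i.e. $\G^{-1}=J\circ\G\circ J$. Together with $J(\cK_-)=\cK_+$ (Lemma~\ref{lifted_ne_top_lem}(2)), this lets me deduce the ``$\cK_-$-side'' assertions (the backward half of (1), and (3)) from the ``$\cK_+$-side'' ones (the forward half of (1), and (2)). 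I will also use freely that $S\colon K(S)\to K(S)$ is a degree-$d$ branched self-cover: indeed $S|_{\partial\cU}$ is an involution of $\partial\cU$, so $S^{-1}(\cU)\subset\cU$ and $K(S)$ is $S$-invariant with $S|_{K(S)}$ being $d$-to-$1$, because $S\colon S^{-1}(\cU)\to\cU$ is proper of degree $d$.

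\emph{Forward invariance of $\cK_-$.} Write $\cK_-=(\cK\cap\mathfrak{D}^*)\cup\mathfrak{P}$, using $\cK\cap\partial\mathfrak{D}=\mathfrak{P}$ from Lemma~\ref{lifted_ne_top_lem}(1). If $z\in\cK\cap\mathfrak{D}^*$, then \eqref{corr_eqn} reads $Q(w)=Q(J(z))$, $w\neq J(z)$; since $J(z)\in\mathfrak{D}$ and $Q|_{\mathfrak{D}}$ is injective, $J(z)$ is the only $Q$-preimage of $Q(J(z))$ inside $\mathfrak{D}$, and it is excluded, so $\G(z)\cap\mathfrak{D}=\emptyset$. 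If $z\in\mathfrak{P}\subset\overline{\mathfrak{D}}$, then $\G(z)$ consists of $Q$-preimages of $S(Q(z))=Q(J(z))\in\mathfrak{S}\subset\partial\cU$, whose unique preimage in $\overline{\mathfrak{D}}$ is $J(z)\in\partial\mathfrak{D}$; keeping track of the higher multiplicity of the critical point $J(z)\in\mathfrak{P}$ of $Q$, all other preimages lie in $\mathfrak{D}^*$, so again $\G(z)\cap\mathfrak{D}=\emptyset$. In either case $\G(z)\subseteq\cK$ by Proposition~\ref{correspondence_partition_holo}(1), hence $\G(z)\subseteq\cK\setminus\mathfrak{D}=\cK_-$. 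Backward invariance of $\cK_+$ follows formally: $\G^{-1}(\cK_+)=J\big(\G(J(\cK_+))\big)=J\big(\G(\cK_-)\big)\subseteq J(\cK_-)=\cK_+$.

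\emph{The forward branch on $\cK_+$.} Define $g:=(Q|_{\overline{\mathfrak{D}}})^{-1}\circ S\circ Q|_{\cK_+}$. Since $Q|_{\cK_+}\colon\cK_+\to K(S)$ is a homeomorphism (Lemma~\ref{lifted_ne_top_lem}(3)), $S\colon K(S)\to K(S)$ is a degree-$d$ branched self-cover, and $(Q|_{\overline{\mathfrak{D}}})^{-1}$ maps $K(S)\subset\overline{\cU}$ homeomorphically onto $\cK_+$, the map $g$ is a well-defined $d$-to-$1$ self-map of $\cK_+$, conformally conjugate to $S|_{K(S)}$ via $Q|_{\cK_+}$, and onto because $S|_{K(S)}$ is. To see it is a branch of $\G$: for $z\in\cK_+\subseteq\overline{\mathfrak{D}}$ we have $Q(g(z))=S(Q(z))$ by construction; if $z\notin\mathfrak{P}$ then $g(z)\in\overline{\mathfrak{D}}$ while $J(z)\in\mathfrak{D}^*$, so $g(z)\neq J(z)$ and $(z,g(z))\in\G$ by Proposition~\ref{inverse_lift_corr_holo}(1); if $z\in\mathfrak{P}$ then $g(z)=J(z)$ and $(z,J(z))\in\G$ still, because $Q'(J(z))=0$ makes the left side of \eqref{corr_eqn} vanish. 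Finally, for $z\in\cK_+$ every element of $\G(z)$ is a $Q$-preimage of $S(Q(z))\in K(S)$; exactly one such preimage, $g(z)$, lies in $\overline{\mathfrak{D}}$, so $\G(z)\setminus\{g(z)\}\subseteq\mathfrak{D}^*$, and being contained in $\cK$ it lies in $\cK\setminus\mathfrak{D}=\cK_-$. Surjectivity of these remaining branches onto $\cK_-$ again follows from surjectivity of $S$ on $K(S)$.

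\emph{The backward branch on $\cK_-$, and the main difficulty.} Set $\widetilde{g}:=J\circ g\circ J\colon\cK_-\to\cK_-$ (well-defined since $J(\cK_-)=\cK_+$). From $\G^{-1}=J\circ\G\circ J$ and $\mathrm{graph}(g)\subseteq\G$ one gets, for $z\in\cK_-$, that $(\widetilde{g}(z),z)\in\G$, i.e. $\widetilde{g}$ is a backward branch of $\G$ carrying $\cK_-$ onto itself; it is conjugate to $g$ via $J$, hence conformally conjugate to $S|_{K(S)}$ via $Q\circ J|_{\cK_-}\colon\cK_-\to K(S)$, and in particular has degree $d$. The only point needing real care in all of the above is the finite set $\mathfrak{P}$ of critical points of $Q$ on $\partial\mathfrak{D}$ --- equivalently the parabolic fixed point $\pmb{s}$ of $R_F$ and its $Q$-preimages --- where the clean description $\G(z)=Q^{-1}(S(Q(z)))\setminus\{J(z)\}$ degenerates, because there $J(z)$ is itself a root of $Q(w)-Q(J(z))$ of multiplicity $\geq 2$ lying on $\partial\mathfrak{D}$ rather than strictly inside or outside. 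One must check that $g$ extends across these points as a genuine branch of the algebraic curve $\G$ (there it coincides with $J$) and that the multiplicity bookkeeping still puts the remaining $d-1$ images in $\cK_-$; this is routine, and everything else is formal consequence of Propositions~\ref{inverse_lift_corr_holo}--\ref{correspondence_partition_holo}, Lemma~\ref{lifted_ne_top_lem}, and the $J$-symmetry of $\G$.
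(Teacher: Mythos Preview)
The paper does not give its own argument here; it simply records that the proof of \cite[Proposition~2.6]{LMM23} applies verbatim to the holomorphic setting. Your direct proof is correct and is the natural one (and presumably coincides in outline with the cited reference): read off forward invariance of $\cK_-$ from the description of $\G$ in Proposition~\ref{inverse_lift_corr_holo} together with injectivity of $Q|_{\overline{\mathfrak{D}}}$, exhibit the distinguished branch $g=(Q|_{\overline{\mathfrak{D}}})^{-1}\circ S\circ Q$ on $\cK_+$ as the $Q$-lift of $S|_{K(S)}$, and transport everything to $\cK_-$ via the symmetry $\G^{-1}=J\circ\G\circ J$ (which you verify cleanly from~\eqref{corr_eqn}). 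One point worth noting: your treatment of the finite set $\mathfrak{P}$ is actually more careful than the literal statement of Proposition~\ref{inverse_lift_corr_holo}(1), whose ``iff'' tacitly assumes $J(z)$ is not a critical point of $Q$; when $J(z)\in\mathfrak{P}$ one has $(z,J(z))\in\G$ because $Q'(J(z))=0$ makes the left side of~\eqref{corr_eqn} vanish, and you handle this correctly by going back to the defining equation.
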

\begin{proof}
The proof of \cite[Proposition~2.6]{LMM23} applies verbatim to the holomorphic setting.
\end{proof}

\subsubsection{Correspondences as mating}\label{corr_mating_subsec}

Note that a B-involution $S:\overline{\cU}\to\widehat{\C}$ restricts to the pinched polynomial-like map $(S\vert_{\overline{S^{-1}(\cU)}}, \overline{S^{-1}(\cU)}, \overline \cU)$ with filled Julia set $K(S)$. Via the uniformizing rational map $Q:\mathfrak{D}\to\cU$, this pinched polynomial-like restriction of $S$ gives rise to a natural pinched polynomial-like restriction for the $d:1$ forward branch of the correspondence $\G$ carrying $\cK_+$ onto itself.
\smallskip

\noindent\textbf{Convention:} We will identify a B-involution with the above choice of pinched polynomial-like restrictions when discussing hybrid conjugacies.

\begin{theorem}\label{mating_exists_gen_thm}
Let $R\in\pmb{\mathcal{B}}_d$. Then, there exists a polynomial map $Q$ of degree $d+1$ and a closed Jordan disc $\overline{\mathfrak{D}}$ satisfying the conditions
\begin{enumerate}\upshape
\item $1, -1\in\partial\mathfrak{D}$,
\item $J(\partial\mathfrak{D})=\partial\mathfrak{D}$,
\item $\partial\mathfrak{D}\setminus\{1\}$ is a non-singular real-analytic curve, 
\item $Q'(1)=0$ and $Q'(z)\neq 0$ for $z\in\partial\mathfrak{D}\setminus\{1\}$, and
\item $Q\vert_{\overline{\mathfrak{D}}}$ is injective,
\end{enumerate}
such that the associated holomorphic correspondence $\F:=\G^{-1}$ on $\widehat{\C}$, where $\G$ is defined by the algebraic curve of Equation~\eqref{corr_eqn},
is a mating of the Hecke group $\mathcal{H}_{d+1}$ and the rational map $R$.
\end{theorem}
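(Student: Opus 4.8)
The plan is to run the construction of Subsections~\ref{b_inv_subsec}--\ref{corr_mating_subsec} on the conformal mating $R_F$ supplied by Corollary~\ref{farey_hecke_mating_cor}, and then check the four conditions of Definition~\ref{mat} one by one. First I would invoke Corollary~\ref{farey_hecke_mating_cor}(2) to obtain the extended map $R_F\colon\overline{\cU}\to\widehat{\C}$ which is hybrid equivalent to $R$, is conformally conjugate via some $\psi\equiv\psi_{R_F}\colon\widehat{\C}\setminus K(R_F)\to\D$ to $F_d\colon\cD_1\cap\D\to\D$, and has $\partial\cU$ non-singular real-analytic off the point $\pmb{s}$ corresponding to $1$. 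Lemma~\ref{conf_mating_description_lem} then writes $R_F$ as a B-involution $S=Q\circ J\circ(Q\vert_{\overline{\mathfrak{D}}})^{-1}$ with $Q$ a degree $d+1$ polynomial, $\mathfrak{D}$ a Jordan disc, and $\mathfrak{P}=\{1\}$; this is exactly the list of conditions (1)--(5) in the statement. I would then form the $d:d$ correspondence $\G$ via Equation~\eqref{corr_eqn} and set $\F:=\G^{-1}$. (One notes along the way that $\G=Cov_0^Q\circ J$, hence $\F=J\circ Cov_0^Q$, which also gives the form asserted in the Main Theorem.)

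Next I would verify conditions (1)--(3) of Definition~\ref{mat} by quoting the general structure theory for B-involutions. Propositions~\ref{tiling_open_prop} and~\ref{correspondence_partition_holo} give the completely invariant partition $\widehat{\C}=\cK\sqcup\Omega$, with $\Omega=Q^{-1}(T^\infty(S))$ open, $\cK=Q^{-1}(K(S))$ closed, and both $J$-invariant. Since $R_F$ is hybrid equivalent to $R\in\pmb{\mathcal{B}}_d$ and $K(R)$ is connected, $K(S)=K(R_F)$ is connected; checking that $\cK$ is connected (the critical points of $Q$ lying over $K(S)$ all sit in $\cK_-$, so $Q\vert_{\cK_-}\colon\cK_-\to K(S)$ does not disconnect) yields that $\Omega=\widehat{\C}\setminus\cK$ is a simply connected domain, giving (1). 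Lemma~\ref{lifted_ne_top_lem} gives $\cK=\cK_-\cup\cK_+$ with $\cK_-\cap\cK_+=\mathfrak{P}=\{1\}=:\{p\}$, and Proposition~\ref{basic_dynamics_corr_prop_holo} gives that $\cK_+$ is forward invariant and $\cK_-$ backward invariant under $\F$, and that both $\F\vert_{\cK_-}$ and $\F^{-1}\vert_{\cK_+}$ are conformally conjugate, via $Q$, to $S\vert_{K(S)}$ (hence to each other), establishing (2). For (3), the same branch of $\F$ on a pinched neighbourhood of $\cK_-$ is conformally conjugate via $Q$ to $S=R_F$ on a pinched neighbourhood of $K(R_F)$; composing with the hybrid equivalence $R_F\sim R$ of Corollary~\ref{farey_hecke_mating_cor} gives the required hybrid equivalence, and the pinching structure matches because the marked parabolic point of $R$ has exactly $d$ preimages.

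The heart of the proof is condition (4). Here I would use that $T^\infty(S)=\widehat{\C}\setminus K(R_F)$, so $\Omega=Q^{-1}(\widehat{\C}\setminus K(R_F))$, and that $Q\vert_\Omega\colon\Omega\to\widehat{\C}\setminus K(R_F)$ is a degree $d+1$ branched covering whose only critical point is $\infty$, of local degree $d+1$, with critical value $v_0=\infty$; under $\psi$ this corresponds to the unique critical value $0$ of $F_d$, which likewise has local degree $d+1$. On the model side, $\theta_1\colon\D\to\D$ is a degree $d+1$ branched covering with a single critical point (the fixed point of $\rho$) of local degree $d+1$ over $0$, and $F_d=\theta_1\circ\sigma\circ\underline{\theta_1}^{-1}$ on $\cD_1$. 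Since $\Omega$ and $\D$ are simply connected and the branching data of $\psi\circ Q\vert_\Omega$ and of $\theta_1$ agree, there is a conformal isomorphism $\widetilde\psi\colon\Omega\to\D$ with $\theta_1\circ\widetilde\psi=\psi\circ Q$ on $\Omega$. By Proposition~\ref{inverse_lift_corr_holo}, $\G$ restricted to $\Omega$ is the lift through $Q$ of $R_F$ together with the inverse branches of $R_F$; transporting by $\widetilde\psi$ and $\psi$, this becomes the lift through $\theta_1$ of $F_d=\theta_1\circ\sigma\circ\underline{\theta_1}^{-1}$ and of its inverse branches. A bookkeeping computation with the generators — using $\alpha_j=\sigma\circ\rho^j$, that $\rho$ permutes the $\theta_1$-sheets cyclically, and property (ii) of Section~\ref{hecke_ext_map_sec} that $\sigma$ conjugates $\alpha_j$ to $\alpha_{d+1-j}^{-1}$ — shows that, after possibly adjusting $\widetilde\psi$ by a power of $\rho$, the map $\widetilde\psi$ conjugates $J$ to $\sigma$ and carries the $d$ branches of $\F=\G^{-1}$ on $\Omega$ to $\alpha_1,\dots,\alpha_d$, which is (4).

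I expect the main obstacle to be exactly this last matching: pinning down the $d$ branches of $\F\vert_\Omega$ as the prescribed generators $\alpha_j$ (including the two parabolic ones $\alpha_1,\alpha_d$, which both emanate from the single pinch point $p$), since this requires tracking the fundamental-domain combinatorics of $\mathcal{H}_{d+1}$ through the degree $d+1$ lift $\widetilde\psi$ and confirming that $\widetilde\psi$ extends holomorphically across $p$ without creating a puncture. A secondary but essential point of care is the connectedness of $\cK$ (equivalently, simple connectivity of $\Omega$), which relies on the precise location of the critical points of the polynomial $Q$ recorded in the Remark following Proposition~\ref{formofF}.
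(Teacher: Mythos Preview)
Your plan is correct and follows the same route as the paper: invoke Corollary~\ref{farey_hecke_mating_cor} and Lemma~\ref{conf_mating_description_lem} to get $Q,\mathfrak{D}$, then use Lemma~\ref{lifted_ne_top_lem} and Propositions~\ref{correspondence_partition_holo},~\ref{basic_dynamics_corr_prop_holo} for conditions (1)--(3), and finally lift the conformal conjugacy $\psi_{R_F}$ through the two degree-$(d+1)$ branched covers $Q$ and $\theta_1$ to verify (4).

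Two places where the paper is sharper than your sketch. First, simple connectivity of $\Omega$: rather than going through connectedness of $\cK$ and critical-point bookkeeping, the paper applies Riemann--Hurwitz directly to the proper map $Q\colon(\Omega,\infty)\to(T^\infty(R_F),\infty)$, which is degree $d+1$ with a single critical point of multiplicity $d$ at $\infty$; since $T^\infty(R_F)\cong\D$ is simply connected, $\chi(\Omega)=(d+1)-d=1$. Second, the branch identification you flag as the main obstacle is handled very cleanly: since $Q\colon\Omega\setminus\{\infty\}\to T^\infty(R_F)\setminus\{\infty\}$ is a regular $(d+1)$-cover of annuli, its deck group is cyclic with generator $\tau$, and the $d$ forward branches of $\G$ on $\Omega$ are exactly $\tau\circ J,\ldots,\tau^d\circ J$. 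The lifted conjugacy $\widetilde{\mathfrak{X}}$ is normalised to send $C_1$ onto $\partial\mathfrak{D}\setminus\{1\}$; on that arc it conjugates $\sigma$ to $J$ (both descending to $F_d\vert_{\partial\mathfrak{H}_1}\cong R_F\vert_{\partial\cU}$), and the identity principle promotes this to all of $\D$. Then $\widetilde{\mathfrak{X}}^{-1}\tau\widetilde{\mathfrak{X}}$ is a finite-order automorphism of $\D$ fixing $0$, hence a rotation, hence a power of $\rho$. No combinatorial bookkeeping with the $\alpha_j$ is needed.

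Finally, your worry about ``extending $\widetilde\psi$ holomorphically across $p$'' is a non-issue: the point $p=1$ lies in $\cK$, not in $\Omega$, so the conjugacy $\widetilde\psi\colon\Omega\to\D$ never sees it.
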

\begin{proof}
Let $R_F:\overline{\cU}\to\widehat{\C}$ be the map produced by Lemma~\ref{conf_mating_lem}, and $Q,\mathfrak{D}$ be as in Lemma~\ref{conf_mating_description_lem}.
Let $\G$ be the associated correspondence of bi-degree $d$:$d$ (see Section~\ref{gen_const_subsec}).

In light of Definition~\ref{mat}, Lemma~\ref{lifted_ne_top_lem}, and Propositions~\ref{correspondence_partition_holo} and~\ref{basic_dynamics_corr_prop_holo}, we only need to show that the $d$ branches of $\G$ on $\Omega$ are conformally conjugate to the actions of the generators $\beta_1,\cdots,\beta_d$ of $\mathcal{H}_{d+1}$ on $\D$ (see Section~\ref{hecke_map_subsec}).

Recall that $R_F$ is a B-involution associated with the domain $\cU$ such that $T^0(R_F)=\widehat{\C}\setminus\left(\cU\cup\pmb{s}\right)$, $K(R_F)=\kappa(K(R))$, and $T^\infty(R_F)=\kappa(\mathcal{A}(R))$ (where $\kappa$ is the quasiconformal homeomorphism of Lemma~\ref{conf_mating_lem}). 

The polynomial $Q:(\Omega,\infty)\to (T^\infty(R_F),\infty)$ is a branched cover of degree $(d+1)$ with a critical point of multiplicity $d$. By the Riemann-Hurwitz formula, $\Omega$ is a simply connected domain. Hence, 
$$
Q:\Omega\setminus\{\infty\}\to T^\infty(R_F)\setminus\{\infty\}
$$ 
is a $(d+1)$-to-1 covering map between topological annuli, and is thus a regular cover with deck transformation group isomorphic to $\Z/(d+1)\Z$.

Let $\tau$ be a generator of the above deck transformation group. Then, 
$$
\tau:\Omega\setminus\{\infty\}\to\Omega\setminus\{\infty\}
$$ 
is a biholomorphism such that $\tau(z)\to \infty$ as $z\to \infty$. We extend $\tau$ to a biholomorphism $\tau$ of $\Omega$ by setting $\tau(\infty)=\infty$. Then, the $d$ forward branches of the correspondence $\G$ on $\Omega$ are given by the conformal automorphisms $\tau\circ J,\cdots,\tau^{\circ d}\circ J$. 

We will now show that there exists a conformal map $\D\longrightarrow\Omega$ that conjugates the standard generators $\sigma, \rho$ of the Hecke group $\mathcal{H}_{d+1}$ to the conformal automorphisms $J,\tau$.

Let $\mathfrak{X}: \left(\D,0\right)  \to \left(T^\infty(R_F),\infty\right)$ be the conformal conjugacy between $F_d$ and $R_F$. Since the maps 
$$
\theta_1:(\D,0)\rightarrow(\D,0)\quad \textrm{and}\quad Q:(\Omega,\infty)\to(T^\infty(R_F),\infty)
$$ 
are degree $(d+1)$ branched coverings that are fully branched over $0,\infty$ (respectively), we can lift $\mathfrak{X}$ to a biholomorphism $\widetilde{\mathfrak{X}}:(\D,0)\to(\Omega,\infty)$. We normalize $\widetilde{\mathfrak{X}}$ so that it maps $C_1$ onto $\partial\mathfrak{D}\setminus \{1\}$. By construction, $\widetilde{\mathfrak{X}}$ maps $\Pi$ onto $\cT^0:=Q^{-1}(T^0(R_F))$.
\[
  \begin{tikzcd}
  \left(\D,0\right)    \arrow{d}[swap]{\mathrm{\theta_1}} \arrow{r}{\widetilde{\mathfrak{X}}} &  \left(\Omega,\infty\right) \arrow{d}{Q} \\
   \left(\D,0\right)   \arrow{r}{\mathfrak{X}}  & \left(T^\infty(R_F),\infty\right)
  \end{tikzcd}
\] 

As $\theta_1, Q$ conjugate $\sigma\vert_{C_1}, J\vert_{\partial\mathfrak{D}}$ to $F_d\vert_{\partial\mathfrak{H}_1}, R_F\vert_{\partial\cU}$ (respectively), and $\mathfrak{X}$ conjugates $F_d\vert_{\partial\mathfrak{H}_1}$ to $R_F\vert_{\partial\cU}$, it follows from the above construction that $\widetilde{\mathfrak{X}}$ conjugates $\sigma\vert_{C_1}$ to $J\vert_{\partial\mathfrak{D}}$. By the identity principle, $\widetilde{\mathfrak{X}}$ conjugates $\sigma\vert_{\D}$ to $J\vert_{\Omega}$.

On the other hand, the map $\widetilde{\tau}:=\widetilde{\mathfrak{X}}^{-1}\circ\tau\circ\widetilde{\mathfrak{X}}$ is a finite order conformal automorphism of $\D$ fixing the origin. Hence, $\widetilde{\tau}$ is a rigid rotation (around $0$) of order $d+1$. After possibly replacing $\tau$ with some iterate of it, we can assume that $\widetilde{\tau}\equiv \rho : z\mapsto \omega z$, where $\omega=e^{\frac{2\pi i}{d+1}}$.

It follows that $\widetilde{\mathfrak{X}}$ conjugates the generators $\sigma$ and $\rho$ of $\mathcal{H}_{d+1}$ to $J$ and $\tau$, respectively.
\end{proof}

\subsection{Algebraic correspondence from the pinched polynomial-like map $R_H$}\label{RtoF}

In this section we shall prove Theorem~\ref{mainthm} by a direct geometric construction of the $d:d$ correspondence $\F$ in the statement. This correspondence $\F$ will be defined on a Riemann sphere $\widetilde{S}$ which double covers the Riemann sphere $\widehat{\C}$ on which the map $R_H$ was defined in Theorem~\ref{hecke_mating_thm} (we use different names, $\widehat{\C}$ and $\widetilde{S}$, for the two Riemann spheres to avoid confusion between them). Our basic strategy for the construction (in subsection~\ref{corr_construction}) will be as follows: we shall lift $R_H$ to a $d:1$ map on one sheet of the cover, and lift $R_H^{-1}$ to a $1:d$ correspondence on the other sheet, and then `fill in the missing branches' to complete the definition of the $d:d$ correspondence $\F$ on the whole of $\widetilde{S}$. On the respective sheets the two lifts will give us copies $\mathcal K_-$ and $\mathcal K_+$ of the filled Julia set $K(R_H)\subset \widehat{\C}$, and on the complement of $\mathcal K_- \cup \mathcal K_+$ the correspondence $\F$ will be conjugate to $\bigcup_{j=1}^d \alpha_j$. Finally in subsection~\ref{right_form} we shall show that $\F$ has the form $J\circ Cov_0^P$ for some polynomial $P$, completing the proof of Theorem~\ref{mainthm}.

As we will be dealing with double covers in this section, it will be convenient to use a model of the action of the Hecke group $\mathcal H_{d+1}$ on the (open) unit disc $\D$ which puts the fixed point of the involution
$\sigma$ at the origin, and the fixed point of the rotation $\rho$ on the negative real axis in $\D$. We will use the notation illustrated in Figure \ref{disc_notation}, which depicts the same map, $\D \to \D/\langle \sigma\rangle$,
$\theta_2: z \to -z^2$ as the right-hand column of Figure \ref{farey_fig}, Section \ref{hecke_map_subsec}, but with the involution $\sigma^M$ and the rotation $\rho^M$ of the right-hand column of Figure \ref{farey_fig} now denoted by $\sigma$ and $\rho$, the (open) left-hand half of $\D$ now denoted by 
$\D_-$, and with the sets $\alpha_j^{-1}(D_-)$ (where $\alpha_j=\sigma\circ\rho^j,\ j=1\ldots d$), now denoted by $A_j$. 
The map $\theta_2$ sends $\D_-$ conformally onto $\D\setminus [0,1)$ (Figure \ref{disc_notation}). Let $A_j':=\theta_2(A_j)$. 
The Hecke map is the $d:1$ map 
$ H_d: \bigcup_{j=1}^d A_j' \to \D\setminus [0,1),$
defined by $\theta_2\circ \alpha_j \circ \theta_2^{-1}$ on each $A_j'$. Thus $H_d$ maps each $A_j'$ conformally onto $\D\setminus [0,1)$. It extends to a continuous map on the union of the closures of the sets $A_j'$,
but this extension is $2:1$ from the inner boundary arc of each $A_j$ onto the interval $[0,1]\subset \overline \D$.
\begin{figure}[h!]
\captionsetup{width=0.96\linewidth}
\begin{tikzpicture}
\node[anchor=south west,inner sep=0] at (0,0) {\includegraphics[width=0.8\textwidth]{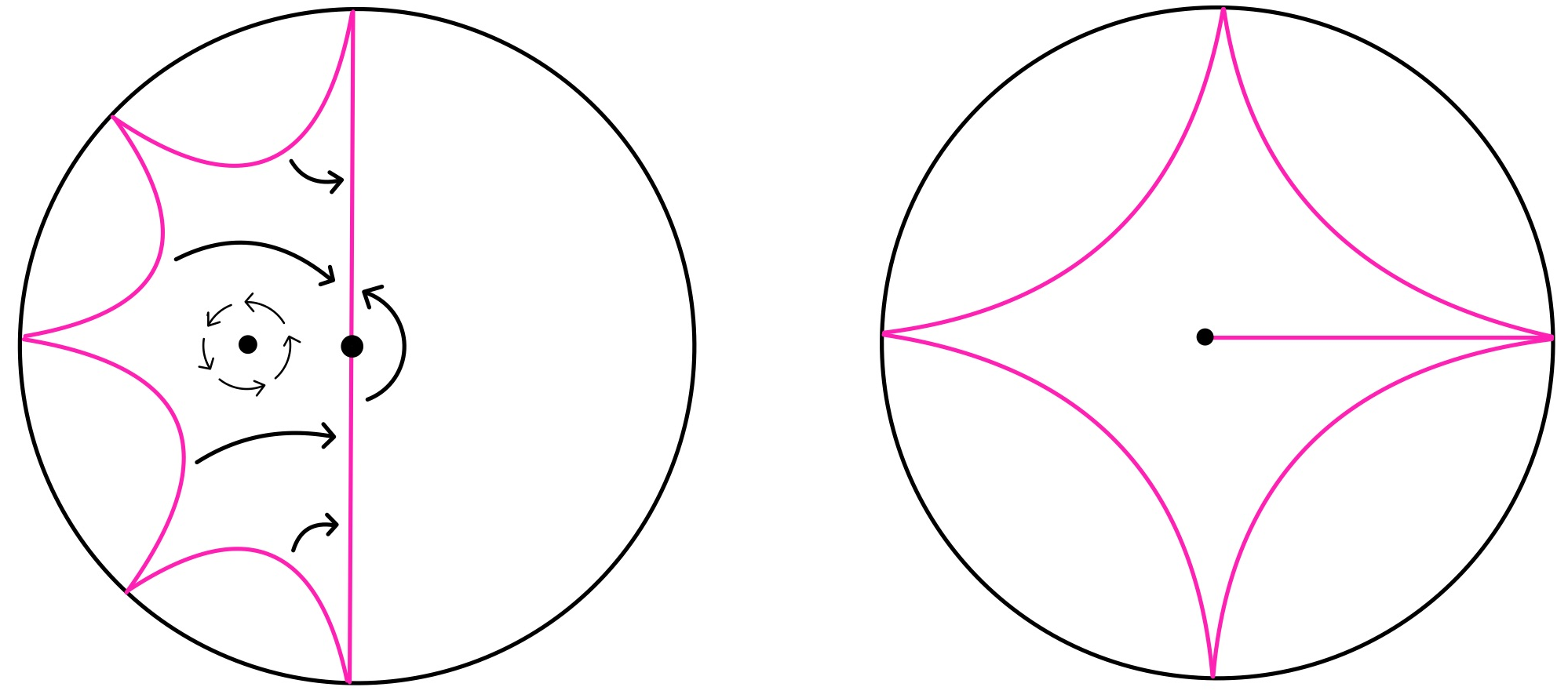}}; 
\node at (1.56,0.6) {$A_1$};
\node at (0.66,1.66) {$A_2$}; 
\node at (0.66,2.84) {$A_3$}; 
\node at (1.56,3.8) {$A_4$}; 
\node at (2.8,2.25) {$\sigma$};
\node at (1.92,2) {\begin{tiny}$\rho$\end{tiny}};
\node at (1.88,1.2) {\begin{tiny}$\alpha_1$\end{tiny}};
\node at (1.64,1.54) {\begin{tiny}$\alpha_2$\end{tiny}};
\node at (1.5,3.05) {\begin{tiny}$\alpha_3$\end{tiny}};
\node at (1.96,3.25) {\begin{tiny}$\alpha_4$\end{tiny}};
\node at (7.66,2.14) {$0$};
\node at (10.18,2.32) {$1$};
\node at (8.8,1.2) {$A_1'$};
\node at (6.75,1.2) {$A_2'$};
\node at (6.75,3.6) {$A_3'$};
\node at (8.8,3.6) {$A_4'$};
\end{tikzpicture}
\caption{Left, the Hecke map on $\D$; right, its quotient under $\sigma$.}
\label{disc_notation}
\end{figure}

We refer the reader to Definition~\ref{corr_defi} for the notion of the covering correspondence $Cov^f$, and the deleted covering correspondence $Cov_0^f$, of a 
rational map $f$, since we will use this terminology repeatedly in the current section. 
We shall use the same terms when $f$ is a holomorphic map from a proper subset $U$ of the Riemann sphere onto the whole sphere, in which case $Cov^f$ and $Cov_0^f$ will denote 
the obvious multi-valued functions from $U$ to itself.

\subsubsection{Constructing the holomorphic correspondence $\F$}\label{corr_construction}

According to Theorem~\ref{hecke_mating_thm}, translated into the notation we are using in this section (with
$\cV$ denoting the same set as in the statement of Theorem~\ref{hecke_mating_thm}), there exists a continuous map $R_H:\cV\to\widehat{\C}$, which is holomorphic on $\Int{\cV}$ and satisfies the following properties. 
\begin{enumerate}[leftmargin=10mm]
\item On a pinched neighbourhood of $K(R_H)\subset\cV$, the map $R_H$ is hybrid equivalent to $R$.
\item There exists a conformal map $\psi\equiv\psi_{R_H}:\widehat{\C}\setminus K(R_H)\to \D$ that conjugates $R_H:\cV\setminus K(R_H)\to\widehat{\C}\setminus K(R_H)$ to $\displaystyle H_d:\bigcup_{j=1}^d \left(\overline{A_j'}\cap\D\right)\to \D$.
\end{enumerate}

Let $\widetilde{S}$ be
the Riemann sphere which double covers $\widehat{\C}$, ramified at the parabolic point of $R_H$ and at $\infty$.
Topologically, $\widetilde{S}$ can be viewed as made of two copies 
$\widetilde{S}_-$ and $\widetilde{S}_+$, of the base sphere $\widehat \C$ cut along 
an arc joining the two ramification points: these copies, $\widetilde{S}_-$ and $\widetilde{S}_+$,
are glued together along opposite edges of the cut to make $\widetilde{S}$.  The computer plot in Figure \ref{double_covers_figure} is drawn using a coordinate $z$ in which the parabolic point is $z=0$ 
(corresponding to $1\in \D$ via $\psi$), and the other branch point, $z=\infty$, corresponds to $0\in \D$. With this coordinate $z$ on $\widetilde{S}$ and the appropriate choice of coordinate on $\widehat{\C}$, the projection $\pi:\widetilde{S} \to \widehat{\C}$ is the map $z \to z^2$.

\begin{figure}[h!]
\captionsetup{width=0.96\linewidth}
\begin{tikzpicture}
\node[anchor=south west,inner sep=0] at (0,0) {\includegraphics[width=0.5\textwidth]{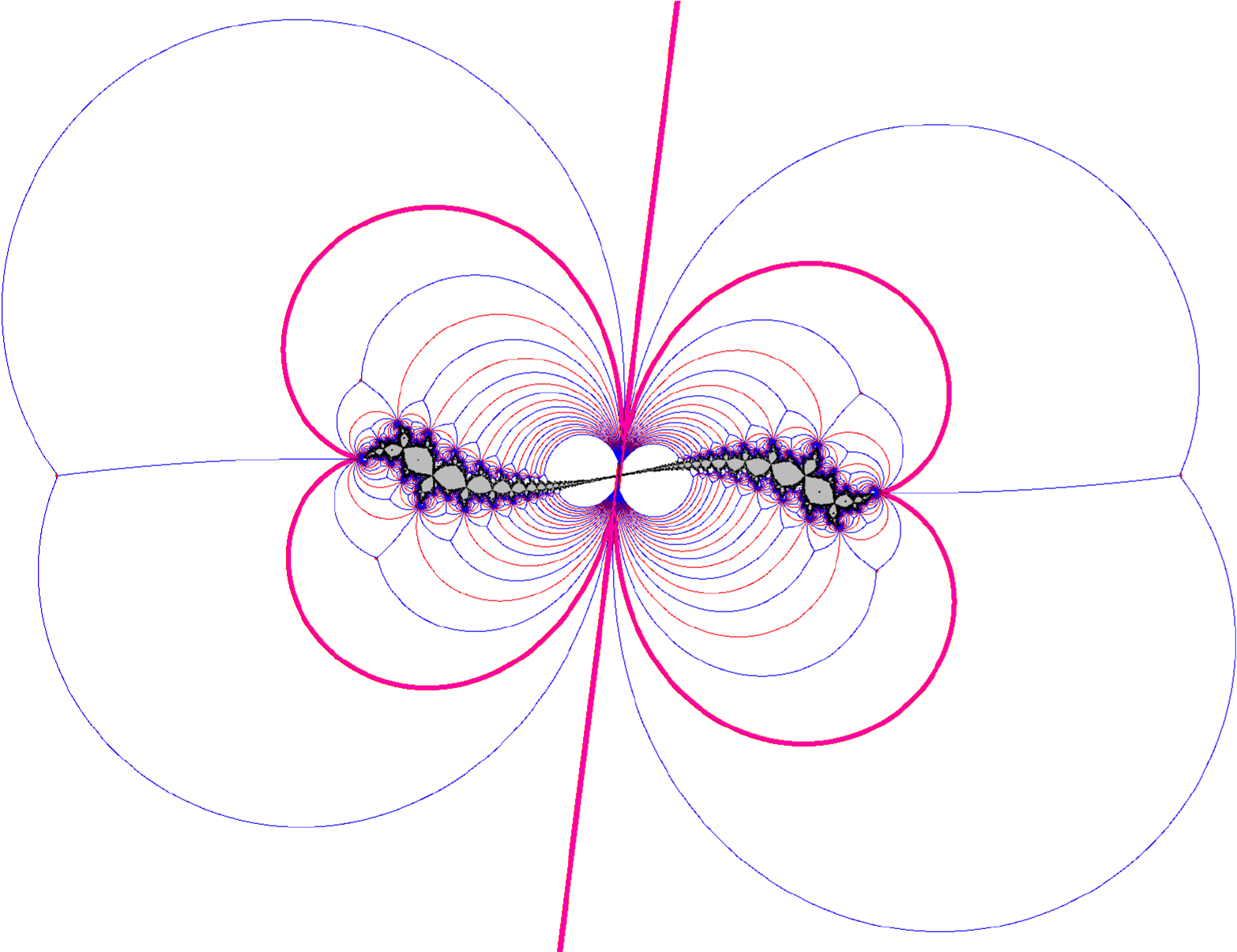}};
\node[anchor=south west,inner sep=0] at (7.5,0) {\includegraphics[width=0.36\textwidth]{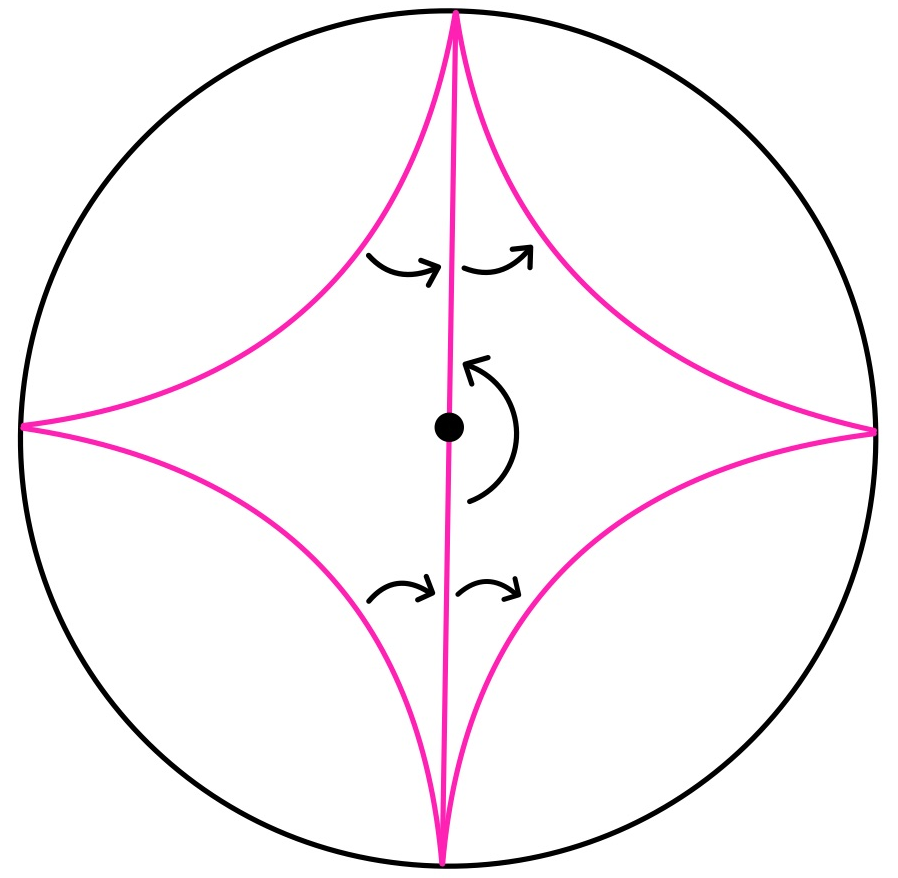}}; 
\node at (10.25,2.25) {\begin{tiny}$\sigma$\end{tiny}};
\node at (9.6,1.84) {\begin{tiny}$C_1$\end{tiny}};
\node at (8.8,2.28) {\begin{scriptsize}$\Pi$\end{scriptsize}};
\node at (10.8,2.28) {\begin{scriptsize}$\sigma(\Pi)$\end{scriptsize}};
\node at (9.96,1.32) {\begin{tiny}$\alpha_1$\end{tiny}};
\node at (9.6,1.32) {\begin{tiny}$\alpha_1$\end{tiny}};
\node at (10,3.3) {\begin{tiny}$\alpha_2$\end{tiny}};
\node at (9.6,3.24) {\begin{tiny}$\alpha_2$\end{tiny}};
\end{tikzpicture}
\caption{The double cover $\widetilde S$ and the conformal isomorphism $\widetilde \psi: \widetilde S\setminus(\cK_-\cup \cK_+)\to \D$ covering 
$\psi: \widehat{\C}\setminus K(R_H) \to \D/\langle\sigma\rangle$ (see text).
The plot is of the correspondence mating between the Douady rabbit and $\mathcal H_3$, the modular group.}
\label{double_covers_figure}
\end{figure}

Let $J$ denote the non-trivial deck transformation of the projection $\pi:\widetilde{S} \to \widehat\C$: in the computer plot Figure  \ref{double_covers_figure} this is the involution $J(z)=-z$ 
of $\widetilde{S}$ which exchanges $\widetilde{S}_-$ with $\widetilde{S}_+$and has fixed points $z=0$ and $z=\infty$. 
Set $\cK_-$ to be the lift of $K(R_H)\subset \widehat \C$ to $\widetilde{S}_-$ and $\cK_+$ to be the lift of $K(R_H)$ to 
$\widetilde{S}_+$. Thus $\cK_+=J(\cK_-)$.

The conformal isomorphism $\psi:\widehat \C\setminus K(R_H) \to \D$ lifts (under the branched coverings 
$\theta_2:\D\to\D$
and $\pi: \widetilde{S} \setminus (\cK_-\cup \cK_+)\to\widehat \C\setminus K(R_H)$) to a conformal isomorphism
$$
\widetilde\psi: \widetilde{S} \setminus (\cK_-\cup \cK_+) \to \D
$$
conjugating $J$ on $\widetilde{S} \setminus (\cK_-\cup \cK_+)$ to $\sigma$ on $\D$.

Let $\widetilde \cV_-$ denote the lift of $\cV$ to $\widetilde{S}_-$; i.e.,
$$
\widetilde \cV_-=\cK_-\cup \widetilde \psi^{-1}(A_1\cup\ldots\cup A_d)\subset \widetilde{S}_-.
$$ 
Similarly set $\widetilde\cV_+:=J(\widetilde\cV_-)$.
Observe that $R_H$ lifts (via $\pi$) to a holomorphic map $\widetilde R_H$ from $\widetilde \cV_-$ onto $\widetilde{S}_-$, 
and that $Cov_0^{\widetilde R_H}$ is a $d-1:d-1$ correspondence from $\widetilde \cV_-$ to itself. 

In the computer plot in Figure  \ref{double_covers_figure}, $\widetilde{S}_-$ and $\widetilde{S}_+$ are the parts of $\widetilde S$ to the left and right of the (thickened) straight red line through the origin, and
their respective subsets $\widetilde \cV_-$ and $\widetilde \cV_+$ are the regions bounded by thickened red arcs (in the plot each boundary is made up of $2$ arcs: it will be made up of $d$ arcs when $d>2$). 
The set $\widetilde \cV_-\setminus \cK_-$ is mapped by $\widetilde \psi$ to $\D_-\setminus \Pi$, and  $\widetilde \cV_+\setminus \cK_+$
is mapped to $\D_+\setminus\sigma(\Pi)$ (note that the origin in $\widetilde S$ is split by $\widetilde\psi$, mapping to both $i\in \overline\D$ and $-i\in \overline\D$, and $\infty \in \widetilde S$ 
is mapped by $\widetilde\psi$ to $0\in\D$).

Define the $d:d$ correspondence $\F: \widetilde{S} \rightarrow \widetilde{S}$ to be:  
$$ \F =\left\{
\begin{array}{cl}
\widetilde R_H  &\mbox{mapping  }   \widetilde \cV_-\xrightarrow{d:1}  \widetilde{S}_-  \\
J \circ Cov_0^{\widetilde R_H}  &\mbox{mapping  }   \widetilde \cV_-\xrightarrow{d-1:d-1}   \widetilde \cV_+ \\
J\circ (\widetilde R_H)^{-1}\circ J  &\mbox{mapping  }   \widetilde{S}_+ \xrightarrow{1:d}  \widetilde \cV_+  \\
\widetilde \psi^{-1}\circ \{\alpha_1,\dots,\alpha_d\}\circ \widetilde\psi  &\mbox{mapping  } \widetilde{S}_-\setminus \widetilde \cV_- \xrightarrow{d:d}  \widetilde{S}_+\setminus \widetilde \cV_+.\\
\end{array}\right.
$$
Notice that $J$ conjugates $\F$ on $\widetilde{S}_-$ to $\F^{-1}$ on $\widetilde{S}_+$: this is true for points in $\widetilde \cV_-$ by the definition of $\F$, but it is also true for points $z\in \widetilde{S}_-\setminus \widetilde \cV_-$
since $\sigma$ conjugates $\alpha_j$ to $\alpha_{d+1-j}^{-1}$ on $\D$.

It is straightforward to check that on the boundaries of $\widetilde \cV_-$, $\widetilde \cV_+$ and $\widetilde{S}_-$ the correspondence $\F$ is continuous, and that the various branches of $\F$ are holomorphic. By conformal removability of piecewise analytic arcs, the correspondence $\F$ is holomorphic on the entire sphere $\widetilde{S}$.
By construction $\F$ is a $d:d$ correspondence and is a mating between $R$ and $\mathcal{H}_{d+1}$ in the sense of Definition~\ref{mat}. This completes the proof of the first statement in Theorem~\ref{mainthm}. 

\begin{remark}
The `obvious' way to define the $d:d$ correspondence $\F$ on $\widetilde{S}$ is to separately define the `obvious' $d:d$ correspondence $\mathcal K_-$ to $\mathcal K_+$ and the `obvious' $d:d$ correspondence $\widetilde{S}\setminus(\mathcal K_- \cup \mathcal K_+) \to \widetilde{S}\setminus(\mathcal K_- \cup \mathcal K_+)$ and then show that these match on the boundary $(\partial \mathcal K_- \cup \partial\mathcal K_+)$. However this approach is problematic if the boundary is not locally connected. Defining $\F$ on the pinched neighborhoods $\widetilde \cV_\pm$ of $\mathcal K_\pm$ overcomes this problem. 
\end{remark}

We now turn to the proof of the second statement of Theorem~\ref{mainthm}.

\subsubsection{Constructing a polynomial $P$ such that $\F=J\circ Cov_0^P$}\label{right_form}

\begin{prop}\label{formofF}
The correspondence $\F$ has the form $J\circ Cov_0^P$ for some polynomial $P$ of degree $d+1$. 
\end{prop}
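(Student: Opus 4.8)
The plan is to show that the correspondence $\mathcal{G}:=J\circ\F$ is the deleted covering correspondence $Cov_0^{P}$ of a polynomial $P$ of degree $d+1$; since $J$ is a conformal involution this gives $\F=J\circ\mathcal{G}=J\circ Cov_0^{P}$ at once. First I would record the piecewise description of $\mathcal{G}$ obtained by applying $J$ (and using $J\circ J=\mathrm{id}$) to the four branches defining $\F$: on $\widetilde{\cV}_-$ the correspondence $\mathcal{G}$ is the union of $Cov_0^{\widetilde{R}_H}$ (a $(d-1):(d-1)$ correspondence $\widetilde{\cV}_-\to\widetilde{\cV}_-$) with the single-valued branch $J\circ\widetilde{R}_H\colon\widetilde{\cV}_-\to\widetilde{S}_+$; on $\widetilde{S}_+$ it is the inverse of this (equivalently $\mathcal{G}=\mathcal{G}^{-1}$, by the relation ``$J$ conjugates $\F|_{\widetilde{S}_-}$ to $\F^{-1}|_{\widetilde{S}_+}$''); and on $\widetilde{S}_-\setminus\widetilde{\cV}_-$ it is conjugated by $\widetilde{\psi}$ to $\{\sigma\alpha_1,\dots,\sigma\alpha_d\}=\{\rho,\rho^2,\dots,\rho^d\}$, using $\sigma\alpha_j=\sigma^2\rho^j=\rho^j$. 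In fact the identities $\sigma\alpha_j=\rho^j$ and $\alpha_j^{-1}\alpha_k=\rho^{k-j}$, together with the compatibility of $\widetilde{R}_H$ with $\widetilde{\psi}$, show that throughout $\Omega$ the map $\widetilde{\psi}$ conjugates $\mathcal{G}$ to $\{\rho,\dots,\rho^d\}$, which is precisely the deleted covering correspondence $Cov_0^{q}$ of the degree $d+1$ quotient map $q\colon\D\to\D/\langle\rho\rangle\cong\D$. Since $\F$ and $J$ preserve the partition $\widetilde{S}=\cK\sqcup\Omega$, so does $\mathcal{G}$.

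Next I would verify that $\mathcal{G}\cup\Delta$ (with $\Delta$ the diagonal) is an equivalence relation on $\widetilde{S}$ whose generic classes have exactly $d+1$ elements. Reflexivity and symmetry are clear; transitivity is checked separately on the two invariant pieces. On $\Omega$ it is immediate: $(\mathcal{G}\cup\Delta)|_{\Omega}$ is the orbit equivalence of the finite group $\langle\rho\rangle\cong\Z/(d+1)\Z$. On $\cK$ one reads it off from the piecewise formula: the $\mathcal{G}$-class of $z\in\cK_-$ is $\widetilde{R}_H^{-1}\big(\widetilde{R}_H(z)\big)\cup\{J(\widetilde{R}_H(z))\}$, and the $\mathcal{G}$-class of $w=J(c)\in\cK_+$ is $\{w\}\cup\widetilde{R}_H^{-1}(c)$; these are the same $(d+1)$-point sets and are closed under the relation. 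Since $\mathcal{G}\cup\Delta$ is a compact analytic curve in $\widetilde{S}\times\widetilde{S}$ with both projections proper of degree $d+1$ and is a reflexive, symmetric, transitive relation with uniformly finite classes, the quotient $Y:=\widetilde{S}/(\mathcal{G}\cup\Delta)$ inherits the structure of a compact Riemann surface for which the projection $P\colon\widetilde{S}\to Y$ is a holomorphic branched covering; then $Cov^{P}=\mathcal{G}\cup\Delta$, and since none of the branches of $\F$ equals $J$ (so $\mathcal{G}$ has no identity branch) we get $Cov_0^{P}=\mathcal{G}$. As the generic fibre of $P$ has $d+1$ points, $\deg P=d+1$.

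It remains to see that $Y\cong\widehat{\C}$ and that $P$ may be taken to be a polynomial. Since $\widetilde{S}$ is a topological sphere, Riemann--Hurwitz gives $2=(d+1)\chi(Y)-R$ with $R:=\sum_{z\in\widetilde{S}}(\deg_z P-1)\geq 0$, so $\chi(Y)=(2+R)/(d+1)>0$; as $Y$ is a closed orientable surface this forces $\chi(Y)=2$, i.e.\ $Y\cong\widehat{\C}$. (One may also see directly that $Y$ is obtained by gluing the disc $\Omega/\langle\rho\rangle$ to the full continuum $\cK_-\cong K(R_H)$ along their common boundary, which is again a sphere.) Finally, let $\widetilde{z}_\rho\in\widetilde{S}_-\setminus\widetilde{\cV}_-$ be the point with $\widetilde{\psi}(\widetilde{z}_\rho)=\mathrm{Fix}(\rho)$; near $\widetilde{z}_\rho$ the relation $\mathcal{G}\cup\Delta$ is the orbit equivalence of the order-$(d+1)$ rotation $\rho$, so $P$ is totally ramified there, of local degree $d+1=\deg P$. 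Hence $P^{-1}(P(\widetilde{z}_\rho))=\{\widetilde{z}_\rho\}$, and declaring $P(\widetilde{z}_\rho)=\infty$ we conclude that $P\colon\widehat{\C}\to\widehat{\C}$ is a degree $d+1$ rational map with a single, totally ramified, preimage of $\infty$ — that is, a polynomial.

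The step I expect to be the main obstacle is the rigorous verification that $\mathcal{G}\cup\Delta$ is a holomorphic equivalence relation with a manifold quotient: transitivity must be confirmed uniformly across the several regions of the piecewise description, and — more delicately — one must rule out pathologies in the quotient along the cut $\psi^{-1}([0,1])$, at the break-points of $\widetilde{\cV}_-$, and at the parabolic pinch point $p=\cK_-\cap\cK_+$ (where $\mathcal{G}$ has a fixed point and $P$ will have a critical point), so that $P$ is holomorphic across all of these loci. This is the analogue for $P$ of the continuity-plus-conformal-removability argument already used to show that $\F$ is holomorphic on $\widetilde{S}$; alternatively one can avoid the quotient construction altogether and build $P$ by hand — as $\iota\circ\widetilde{R}_H$ on $\widetilde{\cV}_-$, as $\iota\circ J$ on $\widetilde{S}_+$ for a suitable conformal identification $\iota$ of $\widetilde{S}_-$ with a round disc, and as the $\langle\rho\rangle$-quotient on $\widetilde{S}_-\setminus\widetilde{\cV}_-$ — then checking that the three pieces glue to a degree $d+1$ polynomial, the bookkeeping being the delicate point.
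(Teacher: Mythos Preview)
Your approach is correct and takes a genuinely different route from the paper's. The paper constructs $P$ explicitly: it singles out a fundamental domain $\Delta_P=\widetilde{S}_+\cup\widetilde\psi^{-1}(T)$ (where $T$ is a fundamental triangle for $\mathcal H_{d+1}$), glues its boundary by $\widetilde\psi^{-1}\rho\widetilde\psi$ to form the target sphere $\Sigma$, defines $P$ piecewise as the identity on $\Delta_P$, as $\widetilde\psi^{-1}\rho^{-j}\widetilde\psi$ on the translates $\widetilde\psi^{-1}(\rho^j T)$, and as $J\circ\widetilde R_H$ on $\widetilde\cV_-$, and then checks $\F=J\circ Cov_0^P$ by direct computation of $Cov_0^P(z)$ in each region. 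Your ``alternative'' at the end is essentially this construction, though your identification $\iota$ is not quite the right object: the target is not a disc but the welded sphere $\Sigma$.

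Your main argument runs in the opposite direction: you recognize $\mathcal G=J\circ\F$ as $Cov_0$ of something by showing $\mathcal G\cup\Delta$ is a holomorphic equivalence relation, then take the quotient. The key computation---that on $\Omega$ the map $\widetilde\psi$ conjugates $\mathcal G$ to the deleted $\langle\rho\rangle$-orbit correspondence, using $\sigma\alpha_j=\rho^j$ and $\alpha_k^{-1}\alpha_j=\rho^{j-k}$---is the conceptual heart of the matter and is made very transparent in your write-up. Your Riemann--Hurwitz argument for $Y\cong\widehat\C$ is clean and avoids the explicit gluing. What the paper's approach buys is that it sidesteps precisely the obstacle you flag: by building $P$ by hand and invoking conformal removability of the matching arcs, one never has to argue abstractly that the quotient of $\widetilde S$ by a holomorphic equivalence relation is a manifold near the pinch point and break-points. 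Your approach buys a cleaner logical structure (the identity $\F=J\circ Cov_0^P$ is automatic rather than checked) and makes the role of $\langle\rho\rangle$ as the deck group of $P$ over $\Omega$ explicit from the start.
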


\begin{proof}
As in the previous subsection, we shall take the action of the Hecke group $\mathcal H_{d+1}$ on the disc $\D$ to be that in which the involution $\sigma$ fixes $0\in \D$, 
the parabolic group element $\alpha_1=\sigma\rho$ fixes $-i\in \overline \D$, and the parabolic element $\alpha_d=\sigma\rho^{-1}$ fixes $+i\in \overline \D$ (Figure \ref{disc_notation}).

Let $\Delta_\rho\subset \D$ denote the fundamental domain for $\rho$ bounded by the geodesics $L$ and $\rho(L)$ in $\D$, from the fixed point of $\rho$ to $-i$ and $+i$
(see the right-hand picture in Figure \ref{Qfigure}). 
Let $\Delta_\sigma:=\D_-$ (the left-hand half of $\D$, a fundamental domain
for $\sigma$), and let $T:=\Delta_\sigma \cap \Delta_\rho$, a fundamental domain for $\mathcal H_{d+1}$: the set
$\{g(T):g\in \mathcal H_{d+1}\}$ defines a tessellation of $\D$, with one tile for each group element $g$.

Define $\Delta_P$ to be the subset $\widetilde\psi^{-1}(\Delta_\rho)\cup \cK_+$ of the Riemann sphere $\widetilde S$.
In Figure \ref{Qfigure}, $\tilde\psi^{-1}(T)$ has boundary the outer pair of thick blue arcs on the left of the computer plot, together with the thick red straight line through 
the origin ($\infty$ is on this boundary: it maps to $0\in \D$ under $\tilde\psi$). Note that $\Delta_P=\widetilde{S}_+\cup \widetilde\psi^{-1}(T)$ and that it is simply-connected.

\begin{figure}[h!]
\captionsetup{width=0.96\linewidth}
\begin{tikzpicture}
\node[anchor=south west,inner sep=0] at (0,0) {\includegraphics[width=0.42\textwidth]{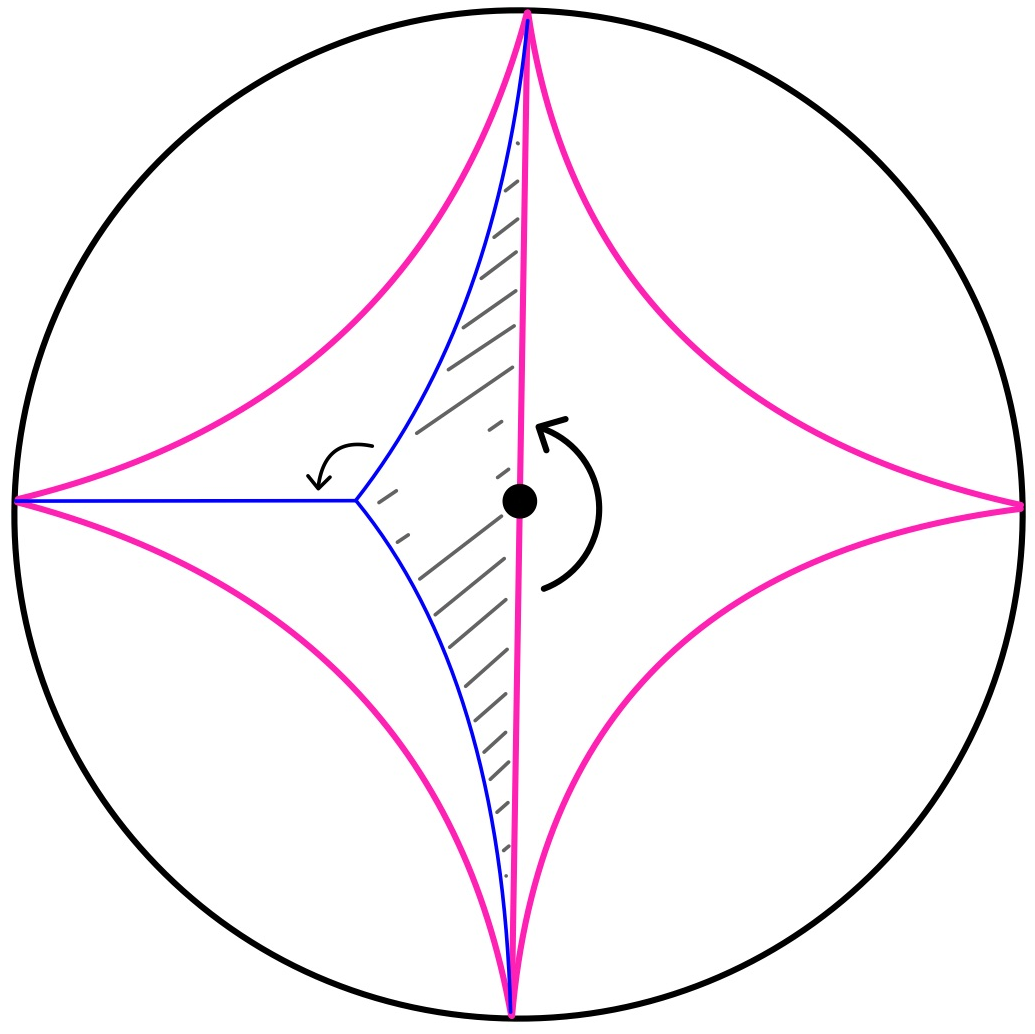}};
\node[anchor=south west,inner sep=0] at (6,0) {\includegraphics[width=0.5\textwidth]{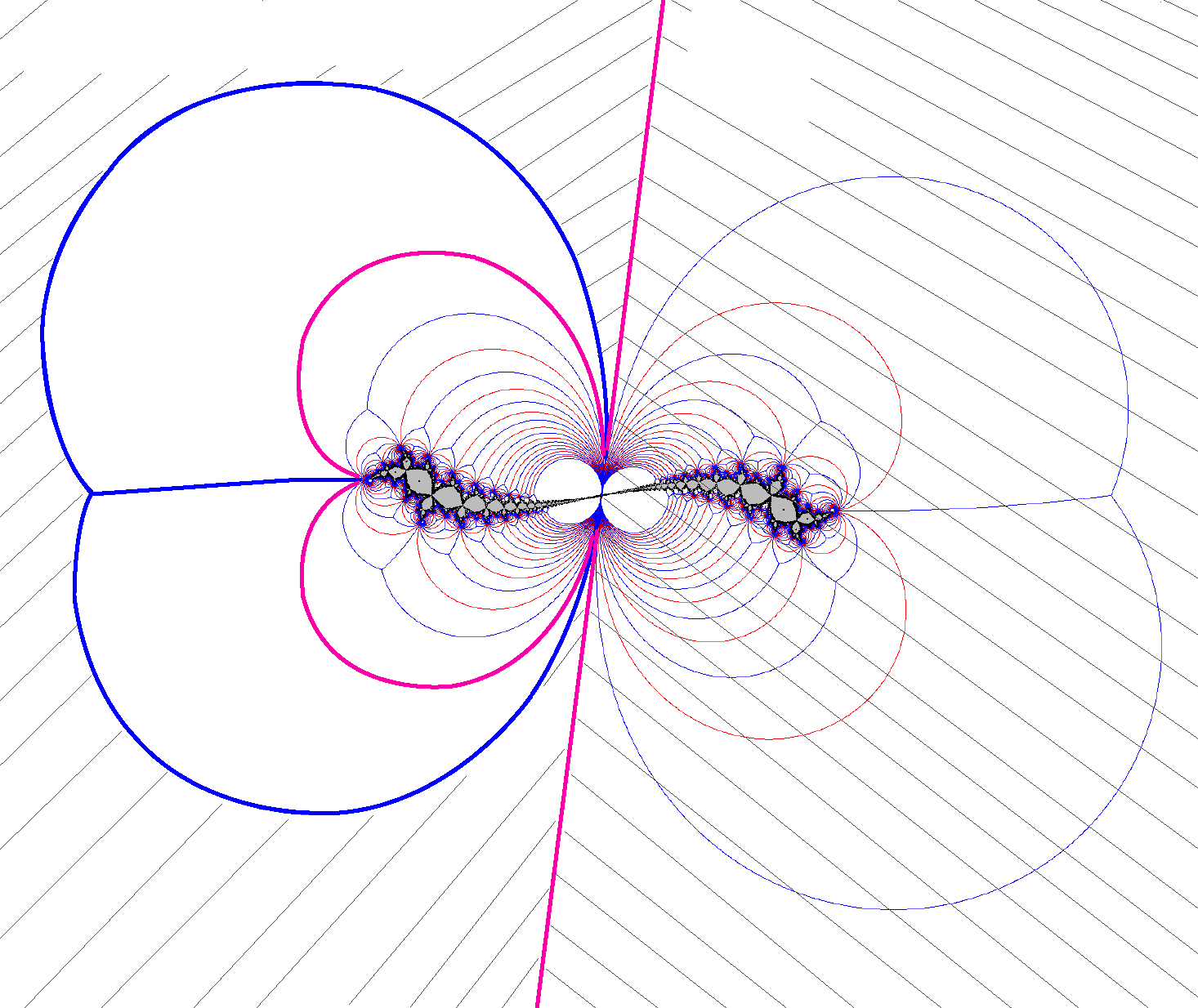}}; 
\node at (3.24,2.66) {\begin{scriptsize}$\sigma$\end{scriptsize}};
\node at (2.1,2.1) {\begin{scriptsize}$L$\end{scriptsize}};
\node at (2.32,2.8) {\begin{scriptsize}$T$\end{scriptsize}};
\node at (1.69,3.1) {\begin{tiny}$\rho$\end{tiny}};
\node at (2,3.54) {\begin{tiny}$\rho(L)$\end{tiny}};
\node at (7.4,5.16) {\begin{scriptsize}$\widetilde{\psi}^{-1}(T)$ ($\subset\widetilde{S}_-$)\end{scriptsize}};
\node at (10,4.96) {\begin{scriptsize}$\widetilde{S}_+$\end{scriptsize}};
\end{tikzpicture}
\caption{On the left, the tile $T=\Delta_\sigma\cap\Delta_\rho$, and on the right, $\Delta_P=\widetilde\psi^{-1}(T)\cup \widetilde S_+$. We construct a degree $3$ (in general $d+1$) rational map $P$ from 
$\widetilde S=\widehat \C$ onto the sphere $\Sigma=\Delta_P/_ \sim $ (where $\sim$ is the boundary identification $\widetilde\psi^{-1}(\rho)$), such that $\Delta_P$ is a fundamental domain for $Cov^P$, 
i.e. modulo boundaries $P$ maps $\Delta_P$ bijectively onto $\Sigma$, and maps $\widehat \C\setminus \Delta_P$ onto $\Sigma$ as a double (in general $d$-fold) branched-covering.}
\label{Qfigure}
\end{figure}

Define an equivalence relation on the boundary of $\Delta_P$ by $\widetilde\psi^{-1}(z)\sim \widetilde\psi^{-1}(\rho z)$ for all $z\in L$ and denote the quotient of  $\Delta_P$ under this identification 
(a sphere) by $\Sigma$.
We define a degree $d+1$ surjective holomorphic map $P:\widetilde S \to \Sigma$ by putting together the following maps:

\begin{enumerate}
\item
$identity/_\sim: \Delta_P=\widetilde{S}_+\cup \widetilde\psi^{-1}(T) \to \Sigma=(\widetilde{S}_+\cup \widetilde\psi^{-1}(T))/_\sim$. 

\item
for $1\le j \le d$,\ \  $\widetilde\psi^{-1}\rho^{-j}\widetilde\psi: \widetilde\psi^{-1}(\rho^j(T)) \to \widetilde\psi^{-1}(T)/_\sim$

(together these $\widetilde\psi^{-1}\rho^{-j}\widetilde\psi$ make up a $d:1$ map 

$\widetilde\psi^{-1}(\rho(T))\cup\ldots\cup\widetilde\psi^{-1}(\rho^d(T))\to \widetilde\psi^{-1}(T)/_\sim$);

\item 
$J\circ \widetilde R_H: \widetilde \cV_- \to \widetilde{S}_+$, also a $d:1$ map 

(which further restricts to a $d:1$ map $\cK_- \to \cK_+$).
\end{enumerate}

These definitions match on boundaries, and form a well-defined degree $(d+1)$ holomorphic branched-covering map of spheres $P: \widetilde S  \to \Sigma$. 
Choosing coordinates so that $\infty$ corresponds to the fixed 
point of $\rho$, the map $P$ becomes a polynomial. To verify that $\F=J\circ Cov_0^P$, we examine the correspondence $Cov_0^P$ for the map $P$ we have just defined.

For any $z\in \widetilde\cV_-$, we have
$Cov_0^P(z)=\{Cov_0^{\widetilde R_H}(z)\} \cup \{J(\widetilde R_H(z))\}$.
(The first set in this union is a subset of $\widetilde \cV_-$, generically $(d-1)$ points, and the second set, which comes from the branch of $P$ which is the identity on $\widetilde S_+$,  is a single point in 
$\widetilde{S}_+$.) It follows from this expression that
$$J\circ Cov_0^P(z) =\{J(\zeta):\ \zeta \in Cov_0^{\widetilde R_H}(z)\} \cup \{\widetilde R_H(z)\}.$$
But this is precisely how we defined the image of $z\in \widetilde\cV_-$ under the correspondence $\F$.
Similar checks for $z$ in each of the other regions in our definition of the correspondence $\F$ confirm that $J\circ Cov_0^P(z)=\F(z)$ for all $z \in \widetilde S$, completing the proof of the Proposition and hence that of Theorem~\ref{mainthm}.
\end{proof}

\begin{remark}

{\ \ \ }
\begin{enumerate}[leftmargin=8mm]
\item
The critical points of $P$ are the points $z$ where the cardinality
of $Cov_0^P(z)$ is strictly less than $d$, namely:
\begin{enumerate}

\item
the critical points of $\widetilde R_H$ (that is, the lift to $\widetilde S_-$ of the critical points of $R$ in $\cK(R)$); these become critical points of $P$, with the same multiplicities;

\item
the fixed point of $\widetilde\psi^{-1}\circ \rho\circ \widetilde\psi$; this is a critical point of $P$ of maximal multiplicity;

\item
the parabolic fixed point $z_0$ of $\F$; this is a simple critical point of $P$ since it is fixed by both $\widetilde R_H$ and $J$ and therefore appears in both of the sets  
$\{Cov_0^{\widetilde R_H}(z_0)\}$ and $ \{J(\widetilde R_H(z_0))\}$
making up $Cov_0^P(z_0)$, thereby reducing by one the cardinality of their union.

\end{enumerate}

\item
By construction $P$ is injective (modulo boundaries) on $\Delta_P\subset \widetilde S$. 
So we can push $\F$ restricted to domain and codomain $\Delta_P$ down to $\Sigma=\Delta_P/_\sim$. 
Here it becomes a $d:1$ map from the subset $P(\widetilde S_+)$ of $\Sigma$ onto the whole sphere $\Sigma$, a map readily identified 
as the mating $R_F$ we proved in Theorem~\ref{farey_mating_thm}  to exist between the rational map $R\in\pmb{\mathcal{B}}_d$ and the Farey map $F$. 
\end{enumerate}
\end{remark}

\appendix

\section{Dictionary between Sections~\ref{corr_from_b_inv_sec} and~\ref{RtoF}}\label{dictionary_sec}

The two proofs of the main theorem given in Sections~\ref{corr_from_b_inv_sec} and~\ref{RtoF} are independent of each other. However, key roles are played by the same spaces and maps. The following table is a post facto dictionary between the two sections, where the objects on the left and the right are the same up to a M{\"o}bius change of coordinates.

\begin{center}
\begin{tabular}{ || c || c ||  }
 \hline

\begin{footnotesize} \textbf{Spaces and maps in Section~\ref{corr_from_b_inv_sec}} \end{footnotesize}  & \begin{footnotesize} \textbf{Spaces and maps in Section~\ref{RtoF}} \end{footnotesize}  \\ \hline

\begin{footnotesize} $J$  \end{footnotesize}  & \begin{footnotesize}  $J$   \end{footnotesize}       \\ \hline 
\begin{footnotesize} $Q$  \end{footnotesize}  & \begin{footnotesize} $P$     \end{footnotesize}     \\  \hline 
\begin{footnotesize}  $\overline{\mathfrak{D}}$  \end{footnotesize} & \begin{footnotesize} $\widetilde{S}_+$  \end{footnotesize} \\ \hline
\begin{footnotesize}  $\overline{\mathfrak{D}^*}= J(\overline{\mathfrak{D}})$  \end{footnotesize}                  &       \begin{footnotesize} $\widetilde{S}_- = J(\widetilde{S}_+)$    \end{footnotesize}              \\ \hline
 \begin{footnotesize}   Riemann sphere $\widehat{\C}$ on which $R_F$ is defined \end{footnotesize}   & \begin{footnotesize} Riemann sphere $\Sigma:=\Delta_P/\sim$    \end{footnotesize}     \\ \hline
 \begin{footnotesize}   $\overline{\mathcal{U}}= Q(\overline{\mathfrak{D}})\subset \widehat{\C}$  \end{footnotesize}  &    \begin{footnotesize} $P(\widetilde{S}_+)\subset \Sigma$ \end{footnotesize}    \\ \hline
 \begin{footnotesize} $R_F=Q\circ J \circ \left(Q\vert_{\overline{\mathfrak{D}}}\right)^{-1}:\overline{\mathcal{U}}\to\widehat{\C}$ \end{footnotesize}  &  \begin{footnotesize} $P\circ J \circ \left(P\vert_{\widetilde{S}_+}\right)^{-1}: P(\widetilde{S}_+)\to\Sigma$  \end{footnotesize}  \\
 \begin{footnotesize} (noting that $Q$ is injective on $\overline{\mathfrak{D}}$) \end{footnotesize} &  \begin{footnotesize} (noting that $P$ is injective on $\widetilde{S}_+$) \end{footnotesize}  \\ \hline 
\begin{footnotesize} (Open) fundamental tile $\Int{T^0(S)}=\widehat{\C}\setminus\overline{\mathcal{U}}$ \end{footnotesize} & 
\begin{footnotesize} Tile $P(\widetilde\psi^{-1}T)=\Sigma\setminus P(\widetilde{S}_+)$  \end{footnotesize}  \\ \hline 
\begin{footnotesize}  Correspondence $\mathcal{G}^{-1}$, where \end{footnotesize}  & 
\begin{footnotesize} Correspondence $\cF=J\circ \mathrm{Cov}_0^P$   \end{footnotesize}  \\ 
\begin{footnotesize} $\mathcal{G}=\mathrm{Cov}_0^Q\circ J=\left(Q^{-1}\circ Q\circ J\right) \setminus\ J$  \end{footnotesize} & 
\begin{footnotesize} $=J\circ\left(P^{-1}\circ P\ \setminus\ \mathrm{Id}\right)$  \end{footnotesize} \\ \hline
\begin{footnotesize} Figure~\ref{two_planes_fig}  \end{footnotesize}  & \begin{footnotesize} Figure~\ref{Qfigure}  \end{footnotesize}  \\ \hline
\end{tabular} 
\end{center}


\begin{thebibliography}{LMMN20}


\bibitem[AS76]{AS}
D.~Aharonov and H.~S. Shapiro.
\newblock Domains on which analytic functions satisfy quadrature identities.
\newblock {\em J. Analyse Math.}, 30:39--73, 1976.

\bibitem[Bea95]{Bea95}
A.~F.~Beardon.
\newblock The geometry of discrete groups.
\newblock Corrected reprint of the 1983 original. Graduate Texts in Mathematics, 91. Springer--Verlag, New York, 1995. 


\bibitem[BF03]{BF1}
S.~Bullett and M.~Freiberger.
\newblock Hecke groups, polynomial maps and matings.
\newblock {\em Int. J. Mod. Phys. B}, 17:3922--3931, 2003.

\bibitem[BF05]{BF2}
S.~Bullett and M.~Freiberger.
\newblock Holomorphic correspondences mating {C}hebyshev-like maps with {H}ecke groups.
\newblock {\em Ergodic Theory Dynam. Systems}, 25:1057--1090, 2005.

\bibitem[BH07]{BH07}
S.~Bullett and P.~Ha{\"i}ssinsky.
\newblock Pinching holomorphic correspondences. 
\newblock {\em Conform. Geom. Dyn.}, 11:65--89, 2007.

\bibitem[BH00]{BH1}
 S. Bullett, and W. Harvey.
\newblock Mating quadratic maps with Kleinian groups via quasiconformal surgery.
\newblock {\em Electronic Research Announcements of the AMS}, 6: 21--30, 2000.     
     
\bibitem[BL20]{BL1}
S.~Bullett and L.~Lomonaco.
\newblock Mating quadratic maps with the modular group {II}.
\newblock {\em Invent. Math.}, 220:185--210, 2020.

\bibitem[BL24]{BL3} 
S.~Bullett and L. Lomonaco.
\newblock Mating quadratic maps with the modular group III: the modular Mandelbrot set.
\newblock {\em Adv. Math.}, 458:Paper No. 109956, pp. 48, 2024.

\bibitem[BP94]{BP}
S.~Bullett and C.~Penrose.
\newblock Mating quadratic maps with the modular group.
\newblock {\em Invent. Math.}, 115:483--511, 1994.

\bibitem[BP01]{BP1}
S.~Bullett and C.~Penrose.
\newblock Regular and limit sets for holomorphic correspondences.
\newblock {\em Fund. Math.}, 167:111--171, 2001.

\bibitem[DH85]{DH}
A.~Douady and J.~H. Hubbard.
\newblock On the dynamics of polynomial-like mappings.
\newblock {\em Ann. Sci. Ec. Norm. Sup.}, 18:287--343, 1985.

\bibitem[Hec35]{H}
E.~Hecke.
\newblock \"Uber die Bestimmung Dirichletscher Reihen durch ihre Funktionalgleichungen.
\newblock {\em Math. Ann.}, 112:664–699, 1935.

\bibitem[IK12]{IK}
H.~Inou and J.~Kiwi.
\newblock {C}ombinatorics and topology of straightening maps, {I}: {C}ompactness and bijectivity.
\newblock {\em Adv. Math.}, 231:2666--2733, 2012.

\bibitem[LLMM21]{LLMM3}
S.-Y. Lee, M.~Lyubich, N.~G. Makarov, and S.~Mukherjee.
\newblock Schwarz reflections and anti-holomorphic correspondences.
\newblock {\em Adv. Math.}, 385:Paper No. 107766, pp. 88, 2021.

\bibitem[LLMM23]{LLMM1}
S.-Y. Lee, M.~Lyubich, N.~G. Makarov, and S.~Mukherjee.
\newblock Dynamics of {S}chwarz reflections: the mating phenomena.
\newblock {\em Ann. Sci. {\'E}c. Norm. Sup{\'e}r. (4)}, 56:1825--1881, 2023.

\bibitem[LM16]{LM16}
S.~Y.~Lee and N.~G.~Makarov.
\newblock Topology of quadrature domains.
\newblock {\em J. Amer. Math. Soc.}, 29(2):333--369, 2016. 

\bibitem[LPS17]{LPS}
L.~Lomonaco, C.~L.~Petersen and W.~Shen.
\newblock On parabolic external maps.
\newblock {\em Discrete Contin. Dyn. Syst.}, 37: 5085--5104, 2017.

\bibitem[Lom15]{L1} 
L.~Lomonaco. 
\newblock Parabolic-like maps.
\newblock {\em Ergodic Theory Dynam. Systems}, 35:2171--2197, 2015.

\bibitem[Lom14]{L2} 
L.~Lomonaco. 
\newblock Parameter space for families of Parabolic-like mappings.
\newblock {\em Adv. Math.}, 261:200--219, 2014.

\bibitem[Lyu99]{Lyu99}
M.~Lyubich.
\newblock Feigenbaum-Coullet-Tresser universality and Milnor's hairiness conjecture. 
\newblock {\em Ann. of Math. (2)}, 149:319--420, 1999.

\bibitem[LLM24]{LLM24}
Y.~Luo, M.~Lyubich, and S.~Mukherjee.
\newblock A general dynamical theory of Schwarz reflections, B-involutions, and algebraic correspondences.
\newblock arXiv:2408.00204, 2024.

\bibitem[Lyu24]{Lyu24}
M.~Lyubich.
\newblock Conformal geometry and dynamics of quadratic polynomials, vol i-ii.
\newblock \url{http://www.math.stonybrook.edu/~mlyubich/book.pdf}, 2024.

\bibitem[LMM24]{LMM23}
M.~Lyubich, J.~Mazor, and S.~Mukherjee.
\newblock Antiholomorphic correspondences and mating I: realization theorems.
\newblock \emph{Comm. Amer. Math. Soc.}, 4:495--547, 2024.

\bibitem[LMMN25]{LMMN}
M.~Lyubich, S.~Merenkov, S.~Mukherjee, and D.~Ntalampekos.
\newblock David extension of circle homeomorphisms, welding, mating, and removability.
\newblock \emph{Mem. Amer. Math. Soc.}, vol. 313, no. 1588, pp. v+110, 2025.

\bibitem[Mak93]{Mak93}
P.~Makienko.
\newblock Pinching and plumbing deformations of  quadratic rational maps.
\newblock IC/93/32, International Centre for Theoretical Physics, Trieste, Italy, 1993, \url{https://inis.iaea.org/collection/NCLCollectionStore/_Public/24/036/24036519.pdf}.

\bibitem[McM88]{McM}
C.~McMullen,
\newblock \emph{Automorphisms of rational maps}, In Holomorphic functions and moduli, vol. I (Berkeley, CA, 1986), 31--60.
\newblock Math. Sci. Res. Inst. Publ., 10, Springer, New York, 1988. 

\bibitem[Mil06]{Milnor06}
J.~Milnor.
\newblock {\em Dynamics in one complex variable}, volume 160 of {\em Annals of
  Mathematics Studies}.
\newblock Princeton University Press, Princeton, NJ, third edition, 2006.

\bibitem[MM25]{MM23}
M.~Mj and S.~Mukherjee.
\newblock Matings, holomorphic correspondences, and a Bers slice.
\newblock {\em J. {\'E}c. polytech. Math.}, 12:1445--1502, 2025.

\bibitem[PR21]{PR21}
C.~L.~Petersen and P.~Roesch.
\newblock The Parabolic Mandelbrot Set.
\newblock arxiv:2107.09407, 2021.

\bibitem[Pom92]{Pom}
C.~Pommerenke.
\newblock {\em Boundary behaviour of conformal maps}, volume 299 of {\em Grundlehren Math. Wiss.}
\newblock Springer--Verlag, Berlin, 1992, x+300 pp.
 
 
 \bibitem[RL25a]{RL1}
M.~Ratis Laude.
\newblock Surgery in complex dynamics: continuity, circle packing matings, and optimality.
\newblock Thesis (PhD). Impa, 2025.

\bibitem[RL25b]{RL2}
M.~Ratis Laude.
\newblock Continuity of matings of Kleinian groups and polynomials.
\newblock arXiv:2411.08748, 2025.
 
\bibitem[War42]{War42}
S.~E. Warschawski.
\newblock On conformal mapping of infinite strips.
\newblock {\em Trans. Amer. Math. Soc.}, 51(2):280--335, 1942.


 \end{thebibliography}
\end{document}